\numberwithin{equation}{section}
\newtheorem{theorem}{Theorem}[section]
\newtheorem{lemma}{Lemma}[section]
\newtheorem{remark}{Remark} [section]
\newtheorem{ass}{Assumption}[section]
\def\X{\mathcal{X}}
\def\Z{{\cal Z}}
\def \[{\begin{equation}}
\def \]{\end{equation}}
\def\L{{\cal L}}
\begin{document}

\title{\bf \Large Convergence analysis of the direct extension of ADMM for multiple-block separable convex minimization}
\author{ Min Tao\thanks{Department of
    Mathematics, Nanjing University, Nanjing, 210093, China. Email: \texttt{taom@nju.edu.cn}. This author was supported by the Natural Science Foundation of China: NSFC-11301280.}
    \/ \hskip 0.3cm  and\hskip 0.3cm
Xiaoming Yuan\thanks{Department of Mathematics, Hong Kong Baptist University, Hong Kong, China. Email: \texttt{xmyuan@hkbu.edu.hk}. This author is supported by a General Research Fund from Hong Kong Research Grants Council.}
}
\maketitle

\begin{abstract} Recently, the alternating direction method of multipliers (ADMM) has found many efficient applications in various areas; and it has been shown that the convergence is not guaranteed when it is directly extended to the multiple-block case of separable convex minimization problems where there are $m\ge 3$ functions without coupled variables in the objective. This fact has given great impetus to investigate various conditions on both the model and the algorithm's parameter that can ensure the convergence of the direct extension of ADMM (abbreviated as ``e-ADMM"). Despite some results under very strong conditions (e.g., at least $(m-1)$ functions should be strongly convex) that are applicable to the generic case with a general $m$, some others concentrate on the special case of $m=3$ under the relatively milder condition that only one function is assumed to be strongly convex. We focus on extending the convergence analysis from the case of $m=3$ to the more general case of $m\ge3$. That is, we show the convergence of e-ADMM for the case of $m\ge 3$ with the assumption of only $(m-2)$ functions being strongly convex; and establish its convergence rates in different scenarios such as the worst-case convergence rates measured by iteration complexity and the asymptotically linear convergence rate under stronger assumptions. Thus the convergence of e-ADMM for the general case of $m\ge 4$ is proved; this result seems to be still unknown even though it is intuitive given the known result of the case of $m=3$. Even for the special case of $m=3$, our convergence results turn out to be more general than the exiting results that are derived specifically for the case of $m=3$.
\end{abstract}

\bigskip\noindent
{\bf Keywords}: Convex Programming, Alternating Direction Method of Multipliers, Multiple-block, Convergence Analysis.


\section{Introduction}


We consider a canonical convex minimization model with separable structure and linear constraints, whose objective function is the sum of $m$ functions without coupled variables:
\begin{eqnarray}\label{mini3}
\begin{array}{l}
\min\left\{\sum\limits_{i=1}^m \theta_i(x_i)\;\Big|\; \sum\limits_{i=1}^mA_ix_i = b, \; x_i\in\X_i,\; i=1,2,\ldots,m \right\},
\end{array}
\end{eqnarray}
where $\theta_i : {\Re}^{n_i}\to {\Re}$
($i=1,2,\cdots,m$) are closed proper convex functions (not
necessarily smooth); $A_i \in {\Re}^{l \times n_i}$
($i=1,2,\cdots,m$); ${\cal X}_i \subseteq {\Re}^{n_i}$
($i=1,2,\cdots,m$) are nonempty closed convex sets; $b \in {\Re}^l$ and
$\sum_{i=1}^m n_i=n$.
The solution set of (\ref{mini3}) is assumed to be nonempty throughout our discussion.

Let the augmented Lagrangian function of (\ref{mini3}) be
\begin{eqnarray}\label{ALf3}
\begin{array}{c}
\mathcal{L}_{\beta}(x_1,x_2,\ldots, x_m,z):=\sum\limits_{i=1}^m\theta_i(x_i)- z^\top(\sum\limits_{i=1}^mA_ix_i-b)
+\frac{{\beta}}{2}\left\|\sum\limits_{i=1}^m A_ix_i-b\right\|^2
\end{array}
\end{eqnarray}
with $z \in {\Re}^l$ the Lagrange multiplier and $\beta>0$ the penalty parameter.
We focus on the following iterative scheme with $m\ge 3$:
\begin{subnumcases}{\label{EADMM}}\label{x1a}
x_1^{k+1}=\hbox{arg}\min\big\{\mathcal{L}_{\beta}(x_1,x_2^k,\ldots,x_m^k,z^k) \mid x_1\in\mathcal{X}_1\big\},\\
\label{x2a} x_2^{k+1}=\hbox{arg}\min\big\{\mathcal{L}_{\beta}(x_1^{k+1},x_2,\ldots,x^k_m,z^k) \mid x_2\in\mathcal{X}_2\big\},\\
\qquad\qquad\qquad\cdots\;\cdots\;\cdots\nonumber\\
\label{xia}x_i^{k+1}=\hbox{arg}\min\big\{\mathcal{L}_{\beta}(x_1^{k+1},\ldots,x_{i-1}^{k+1},x_i,x_{i+1}^k,\ldots,x^k_m,z^k) \mid x_i\in\mathcal{X}_i\big\},\\
\qquad\qquad\qquad\cdots\;\cdots\;\cdots\nonumber\\
\label{x3a} x_m^{k+1}=\hbox{arg}\min\big\{\mathcal{L}_{\beta}(x_1^{k+1},x_2^{k+1},\ldots,x_{m-1}^{k+1},x_m,z^k) \mid x_m\in\mathcal{X}_m\big\},\\ \label{lap}
z^{k+1}=z^k-\beta\left(\sum\limits_{i=1}^mA_ix_i^{k+1}-b\right),
\end{subnumcases}
which starts from an given iterate $(x_2^k,\ldots,x_m^k,z^k)$.
When $m=2$, the scheme (\ref{x1a})-(\ref{lap}) reduces to the alternating direction method of multipliers (ADMM) originally proposed in \cite{GM}. The convergence of ADMM has been well studied in the literature, see \cite{Gabay83,Gabay76,HeYuan-SINUM, HeYuan-NM}. Recently, ADMM has found many applications in a wide range of areas; we refer to, e.g., \cite{Boyd,Eck12,Glow14} for its review. For the generic case of $m\ge3$, the scheme (\ref{x1a})-(\ref{lap}) can be regarded as a direct extension of the alternating direction method of multipliers (abbreviated as ``e-ADMM").
Despite the inertia in algorithmic design and the numerical efficiency in empirical implementation (e.g., in \cite{PGWXM,TY,ZYY}), it was shown in \cite{CHYY} that the e-ADMM (\ref{x1a})-(\ref{lap}) is not necessarily convergent when $m=3$. By mathematical induction, it is easy to prove that the same conclusion for the general case of $m\ge 3$. This rather surprising fact has immediately given impetus to investigate various conditions to ensure the convergence of the scheme (\ref{x1a})-(\ref{lap}) with $m\ge 3$.


In the literature, there are some results for the generic case with a general $m>3$, it was shown in \cite{HanYuan-JOTA} that the scheme (\ref{x1a})-(\ref{lap}) is convergent if all the functions $\theta_i$ are strongly convex. In \cite{LMZa}, the global convergence of (\ref{x1a})-(\ref{lap}) was shown under the conditions that  $(m-1)$ of the functions $\theta_i$ are strongly convex. In \cite{LMZb}, the linear convergence of (\ref{x1a})-(\ref{lap}) was shown under the condition that at least $(m-1)$ of the functions $\theta_i$ are strongly convex together with other assumptions such as $\nabla \theta_i$ are  Lipschitz continuous and $A_i$ are full row/column rank. In addition, the authors of \cite{HongLuo} showed that the linear convergence can be guaranteed if the step size of the last step (\ref{lap}) for updating the multiplier $z^{k+1}$ is shrank by a sufficiently small factor and a certain error bound condition is satisfied.

For the special case of $m=3$, there is a richer set of literature. The first one is \cite{CSY}, which shows the convergence of (\ref{EADMM}) under the assumption that two functions of $\theta_i$ are strongly convex. Still requiring the strong convexity of two functions, the work \cite{LMZa} proves some refined convergence results such as the $O(1/t)$ ergodic convergence rate and $o(1/t)$ non-ergodic convergence rate measured by the iteration complexity, where $t$ is the iteration number. Later, the results in \cite{CSY,LMZa} were improved in \cite{CHY,LiSunToh}, in which the convergence of (\ref{x1a})-(\ref{lap}) was obtained with only one strongly convex function for the case $m=3$. According to the results in \cite{CHYY}, the strong convexity of at least one function seems minimal for the special case of $m=3$ of (\ref{mini3}) to ensure the convergence of (\ref{x1a})-(\ref{lap}); and the work in \cite{CHY,LiSunToh} verifies this conclusion positively.

Given the mentioned results for the case of $m =3$, by analogy, can we claim that we need $(m-2)$ strongly convex functions amid $\theta_i$'s to ensure the convergence of the scheme (\ref{x1a})-(\ref{lap}) for the generic case with a general $m$ that can be larger than $3$? Our main goal in this paper is to answer this question affirmatively. As we shall show, though the answer seems to be intuitive because of the known result for the special case of $m=3$, technically the extension from $m=3$ to $m\ge 3$ is highly nontrivial. One may ask if we can only require $(m-3)$ of the functions $\theta_i$ to be strongly convex to ensure the convergence of (\ref{EADMM}) when $m\ge 4$. In Section \ref{sect72}, we give an example to show  that in general it is not guaranteed and thus verify the rationale of considering the convergence of (\ref{EADMM}) with $m\ge 3$ with the assumption of $(m-2)$ functions being strongly convex. We also refer to the recent work \cite{DY} for a more general study in the operator context and it includes the case of (\ref{mini3}) with $m-2$ strongly convex functions as a special case. But the resulting algorithm (see Algorithm 9 in \cite{DY}) for this special case is not the same as the e-ADMM (\ref{EADMM}) under our consideration, e.g., for the subproblems accompanying the strongly convex functions, their objectives do not involve augmented Lagrangian terms and they are solved in parallel.

In addition to the strong convexity of some or all the functions in the objective of (\ref{mini3}), it is worthwhile to mention that the penalty parameter $\beta$ in (\ref{EADMM}) should be appropriately restricted to theoretically ensure the convergence, see, e.g., all the work \cite{CHY,CSY,HanYuan-JOTA,LiSunToh,LMZa,LMZb}\footnote{Note that such a requirement of the penalty parameter is usually conservative because it is used to sufficiently ensure the convergence. In numerical implementation, it can be appropriately relaxed to result in faster convergence.}. According to Theorem 4.1 in \cite{CHYY}, even all the functions $\theta_i$'s in the objective of (\ref{mini3}) are strongly convex, the scheme (\ref{x1a})-(\ref{lap}) with $m=3$ may be divergent if the penalty parameter $\beta$ is not well restricted. Similarly, in Section \ref{sect73}, we show that the scheme (\ref{EADMM}) could be divergent even when $(m-2)$ functions are strongly convex while the $\beta$ is not restricted appropriately. Therefore, to discuss the more difficult case where only some of the functions $\theta_i$'s are assumed to be strongly convex, we shall also restrict the penalty parameter into certain intervals when discussing the the convergence of e-ADMM (\ref{x1a})-(\ref{lap}) with $m\ge 3$. Indeed, as we shall elucidate, the range of $\beta$, which is eligible to the case with a generic $m$, is even larger than those in \cite{CHY,LiSunToh} when it reduces to the special case of $m=3$. That is, we shall prove the convergence for the e-ADMM (\ref{EADMM}) by requiring only $(m-2)$ strongly convex functions and a larger range of $\beta$ for $m\ge3$. Moreover, we shall establish the worst-case $O(1/t)$ convergence rate in the ergodic sense for $m\ge3$, where $t$ is the iteration counter; and explore some stronger conditions that can ensure the asymptotically linear convergence for $m\ge3$. Thus, compared with existing work in the same category such as \cite{CHY,CSY,HanYuan-JOTA,LiSunToh,LMZa,LMZb}, the convergence results in this paper are more general and they are proved under weaker conditions.

The rest of this paper is organized as follows. We summarize some notation, present the assumptions for future discussion and recall some known results in Section 2. In Section 3, we prove the convergence of e-ADMM (\ref{x1a})-(\ref{lap}) under certain assumptions; this is the main result of this paper. Then, we establish the worst-case convergence rate measured by the iteration complexity in Section 4. In Section 5, we show that the scheme (\ref{x1a})-(\ref{lap}) can be guaranteed to be globally linear convergent if further conditions are posed. In Section 6, we show that the convergence of e-ADMM (\ref{EADMM}) may not be guaranteed if there are no appropriate assumptions on the model (\ref{mini3}) or the penalty parameter $\beta$ in (\ref{EADMM}). Some examples are also constructed. Finally, we draw some conclusions in Section 7.
\medskip

\section{Preliminaries}

In this section, we define some notation to be used; present some assumptions on the model (\ref{mini3}) under which our convergence analysis will be conducted; show the optimality condition of the model (\ref{mini3}) in the variational inequality context.

\subsection{Notation}

The domain of a function $f$ is denoted by $dom(f)$ and the set of all relative interior points of a given nonempty convex set
$\Omega$ by $ri(\Omega)$. Given a vector $x\in\Re^n$, the notation $x_{[i:j]}$ $(1\le i\le j\le n)$ denotes the subvector of $x$ consisting of the $i$-th up to the $j$-th entries of $x$.
If $i=j$, $x_{[i]}$ just denotes the $i$-th entry of $x$.
For any given vector $x$ and a symmetric positive semi-definite matrix $M$ with appropriate dimensionality, we use $\|x\|_M^2$ to denote $x^\top Mx$.
For an symmetric matrix $M$, let $\|M\|$ denote its $2$-norm. If $M$ is nonsymmetric, we use $\|M\|:=\sqrt{\|M^\top M\|}$ and $\rho(M)$ to denote its spectral radius, i.e., the maximal absolute value of its eigenvalues. A function $f: {\Re}^n \to (-\infty, \infty]$ is strongly convex with modulus $\mu>0$ if it satisfies
\[\label{strongconvex}
f(tx+(1-t)y)\le tf(x)+(1-t)f(y)-\frac{\mu}{2}t(1-t)\|x-y\|^2,\;\;\forall x,y\in {\Re}^n;\]
where $t \in [0,1]$.

Then, based on the coefficient matrices $A_i$ in (\ref{mini3}) and the penalty parameter $\beta$ in (\ref{EADMM}), we define some matrices to simplify our notation in later analysis. More specifically, for $m\ge3$, let the block  triangular matrices $M$, $N$ and  block diagonal matrix $Q$ be respectively defined as:
\[\label{MatrixM}
M=\left (\begin{array}{cccccc}
 0&A_1^\top A_2&A_1^\top A_3&\cdots&A_1^\top  A_m&0\\
 0&0&A_2^\top A_3&\cdots&A_2^\top A_m&0\\
 \vdots&\cdots&\ddots&\vdots&\vdots&0\\
 0&0&0&0&A_{m-1}^\top A_m&0\\
 0&0&0&0&0&0\\
 0&0&0&0&0&0
 \end{array}\right),\;
 N=\left (\begin{array}{ccccc}
 0&0&0&\cdots&0\\
 0&A_2^\top A_2&\cdots&\cdots&\vdots\\
 \vdots&\vdots&\ddots&\vdots&\vdots\\
0&A_m^\top A_2&\cdots&A_{m}^\top A_m&0\\
 0&0&0&0&0
 \end{array}\right),
 \]
 and
\[\label{Q}
Q:=\left(\begin{array}{ccccc}\beta A_2^\top A_2&0&\cdots&0&0\\
0&\beta A_3^\top A_3&\cdots&0&0\\
\vdots&\vdots&\ddots&\vdots&\vdots\\
0&\cdots&\cdots&\beta A_m^\top A_m&0\\
0&\cdots&\cdots&0&\frac{1}{\beta}I\end{array}
\right).\]
Note that both $M$ and $N$ are in the space ${{(n+l)}\times{(n+l)}}$; and $Q$ in ${{(\sum_{i=2}^m n_i+l)}\times{(\sum_{i=2}^m n_i+l)}}$. Also, the matrix $Q$ defined in (\ref{Q}) is positive definite
if $A_i$ ($i=2,\ldots,m$) are assumed to be full column rank and $\beta>0$.

\subsection{Assumptions}

Then, we present the assumptions on the model (\ref{mini3}) to conduct the convergence analysis for the e-ADMM (\ref{EADMM}) with a general $m\ge 3$.

\begin{ass}\label{AS1}
In \eqref{mini3} with $m \ge 3$, the functions $\theta_1$ and $\theta_2$ are convex; the functions $\theta_3,\ldots\theta_m$ are strongly convex with the modulus $\mu_i>0$ $(i=3,\ldots,m)$; $A_i$ ($i=1,\ldots,m$) are full column rank matrices.
\end{ass}


\begin{ass}\label{AS3}
There exists $u'=(x_1',\ldots,x_m')\in ri(dom(\theta_1)\times dom(\theta_2)\times\cdots\times dom(\theta_m))\cap\cal F$,
where
\begin{eqnarray}\nonumber
{\cal F}:=\left\{ u=(x_1,\ldots,x_m)\in{\cal X}_1\times\cdots\times{\cal X}_m|\sum_{i=1}^m A_i x_i=b\right\}.
\end{eqnarray}
\end{ass}
Note that both $\theta_1$ and $\theta_2$ are assumed to be convex; but we also say that they both satisfy the inequality (\ref{strongconvex}) with $\mu=0$ as long as there is no confusion. This helps us present the analysis in a unified notation.

\subsection{Optimality condition of (\ref{mini3}) as a variational inequality}
In the following, we characterize the optimality condition of the model (\ref{mini3}) as a variational inequality. The variational inequality representation plays a crucial role in our convergence analysis to be conducted.

First, let ${\cal W}:= \X_1 \times \X_2\times\cdots\times\X_m \times \Z$ and the Lagrangian function of \eqref{mini3} be
\begin{eqnarray}\nonumber
\L(x_1,x_2,\ldots,x_m,z):=\sum_{i=1}^m\theta_i(x_i)- z^\top( \sum_{i=1}^m A_i x_i-b),
\end{eqnarray}
with $z$ the Lagrange multiplier. Under Assumption \ref{AS3}, it follows from \cite[Corollary 28.2.2]{R} and \cite[Corollary 28.3.1]{R} that
the set of saddle points of $\L(x_1,x_2,\ldots,x_m,z)$, denoted by $\cal W^*$, is nonempty due to the nonempty assumption on the solution set of (\ref{mini3}). Then, solving \eqref{mini3} amounts to finding a saddle point
of $\L(x_1,x_2,\ldots, x_m,z)$.
Therefore, the optimality condition of the model (\ref{mini3}) can be characterized by finding $w^*\in{\cal W}^*$ such that:
\[ \label{MA-VIxyl}
    \left\{
    \begin{array}{l}
     \theta_1(x_1) - \theta_1(x_1^*) + (x_1- x_1^*)^\top (- A_1^\top z^*) \ge 0, \;\;\;\\
      \theta_2(x_2) - \theta_2(x_2^*) + (x_2- x_2^*)^\top (- A_2^\top z^*) \ge 0, \;\;\;\\
     \qquad \cdots \cdots  \quad \qquad\qquad \cdots \cdots \\
       \theta_i(x_i) - \theta_i(x_i^*) + (x_i- x_i^*)^\top (- A_i^\top z^*) \ge \frac{\mu_i}{2}\|x_i-x_i^*\|^2,\;\;\;\\
       \qquad\cdots \cdots  \quad  \qquad\qquad \cdots  \cdots\\
     \theta_m(x_m) - \theta_m(x_m^*) +(x_m- x_m^*)^T ( - A_m^\top z^*) \ge \frac{\mu_m}{2}\|x_m-x_m^*\|^2, \;\;\;\\
     \sum_{i=1}^m A_ix_i^* - b =0,
      \end{array} \right.
       \;\;\forall\; w\in{\cal W},
    \]
where $\mu_i$ is  the strongly convex modulus of $\theta_i$ for $i=3,\ldots,m$. More compactly, the system (\ref{MA-VIxyl}) can be written as the variational inequality:
\begin{subequations}\label{MSVII}
\[\label{MSVI-F}
   \hbox{VI}({\cal W},F,\theta) \qquad \theta(u) - \theta(u^*)+  (w-w^*)^\top F(w^*) \ge \sum_{i=3}^m\frac{\mu_i}{2}\|x_i - x_i^*\|^2, \quad \forall \
       w\in  {\cal W}, \qquad \qquad
       \]
where
\[  \label{MD-F}
  u=\left(  \begin{array}{c} x_1 \\ x_2
                            \\ \vdots\\ x_m
                           \end{array}
                            \right),
     \quad
      w=\left(  \begin{array}{c} x_1 \\ x_2
                            \\ \vdots\\ x_m\\
                            z \end{array}
                            \right) \quad \hbox{and} \quad
     \theta(u) =\sum_{i=1}^m\theta_i(x_i),\quad
   F(w)=\left(  \begin{array}{c}-A_1^\top z \\
                            -A_2^\top  z
                            \\ \vdots\\-A_m^\top z\\
                            \sum_{i=1}^m A_ix_i - b \end{array}
                            \right).
    \]
\end{subequations}
Note that $u$ collects all the primal variables in
(\ref{mini3}) and it is a sub-vector of $w$. Since the variable $x_1$ is not involved in the iteration of e-ADMM (\ref{EADMM}), we denote by
$$
 v=(x_2, x_3, \cdots, x_m, z)
 $$
all the primal and dual variables that are essentially involved in the iteration (\ref{EADMM}). Moreover,  the
solution set of $\hbox{VI}({\cal W},F,\theta)$, i.e., ${\cal W}^*$, is also convex due to Theorem 2.3.5 in \cite{Fa-Pang}.
Accordingly, we also use the notation
\begin{eqnarray}\nonumber
{\cal V}^* = \{ (x_2^*, \ldots, x_m^*,\lambda^*) \, | \,
        (x_1^*,x_2^*, \ldots, x_m^*,\lambda^*)\in {\cal W}^* \}.
\end{eqnarray}

\subsection{Two elementary lemmas}

In the following, we present two elementary lemmas. The first one is trivial and its proof is omitted.
\begin{lemma}\label{F-monotone}
The mapping $F(w)$ defined in (\ref{MD-F}) satisfies
\[   \label{MonoF}
(w'-w)^\top(F(w')-F(w)) = 0 , \quad \forall \,w',  w\in \Re^{n+l}.
\]
\end{lemma}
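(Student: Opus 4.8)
The plan is to exploit the affine structure of the map $F$. Writing $w=(x_1,\ldots,x_m,z)$ and introducing the full constraint matrix $\widehat A=(A_1,\ldots,A_m)\in\Re^{l\times n}$ so that $\sum_{i=1}^mA_ix_i=\widehat A u$, the definition (\ref{MD-F}) shows that $F$ is affine in $w$, namely $F(w)=Pw+q$ with $q=(0,\ldots,0,-b)^\top$ and
\[
P=\left(\begin{array}{cc}0&-\widehat A^\top\\ \widehat A&0\end{array}\right),
\]
where the top-left block is the $n\times n$ zero matrix and the bottom-right block is the $l\times l$ zero matrix. The crucial structural observation, which does all the work, is that $P$ is skew-symmetric, i.e.\ $P^\top=-P$: the only nonzero blocks are $-\widehat A^\top$ in the upper-right corner and $\widehat A$ in the lower-left corner, and these are exactly negatives of one another's transposes.

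Given this, the key step is immediate. Since $F$ is affine, $F(w')-F(w)=P(w'-w)$ for all $w',w$, so that
\[
(w'-w)^\top\big(F(w')-F(w)\big)=(w'-w)^\top P(w'-w).
\]
For any skew-symmetric matrix $P$ and any vector $y$ one has $y^\top Py=0$, because the scalar $y^\top Py$ equals its own transpose $y^\top P^\top y=-y^\top Py$ and hence must vanish. Taking $y=w'-w$ yields the claimed identity (\ref{MonoF}).

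Alternatively, I would verify the cancellation directly without naming $P$, which is perhaps the most transparent route. Writing $\Delta x_i=x_i'-x_i$ and $\Delta z=z'-z$, the inner product expands as $\sum_{i=1}^m\Delta x_i^\top(-A_i^\top\Delta z)+\Delta z^\top\big(\sum_{i=1}^mA_i\Delta x_i\big)$; since each scalar satisfies $\Delta x_i^\top A_i^\top\Delta z=\Delta z^\top A_i\Delta x_i$, the two sums are negatives of each other and cancel term by term. There is no genuine obstacle here, as the result is simply the statement that the (monotone but not strongly monotone) skew-symmetric part of a saddle-point operator contributes nothing to the bilinear form. I nonetheless judge it worth recording this exact cancellation \emph{explicitly}, since the identity (\ref{MonoF}) will be invoked repeatedly when telescoping the variational-inequality and energy estimates in the convergence analysis that follows.
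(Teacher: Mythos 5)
Your proof is correct: the paper omits the proof of this lemma as trivial, and your skew-symmetry argument (equivalently, the explicit term-by-term cancellation $\Delta x_i^\top A_i^\top \Delta z = \Delta z^\top A_i \Delta x_i$) is exactly the intended reasoning. Both of your routes are valid and there is nothing to add.
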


The second lemma shows that the spectral radius is a continuous function with respect to the 2-norm of a matrix. We shall use this property in Section \ref{ext}.

\begin{lemma}\label{conspec} Given two matrices $A\in\Re^{n\times n}$ and $\Delta\in\Re^{n\times n}$ that are not necessarily symmetric.
Suppose that $\|\Delta\|<1$. Then, there exists a positive constant $C$ depending only on the matrix $A$ such that
\begin{eqnarray}\label{contispectral}
|\rho(A+\Delta)-\rho(A)|\le C\|\Delta\|.
\end{eqnarray}
\end{lemma}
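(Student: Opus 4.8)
The plan is to bound the Hausdorff distance between the spectra $\sigma(A)$ and $\sigma(A+\Delta)$ and then read off (\ref{contispectral}) from it. Since $\rho(\cdot)$ is the maximum modulus over the spectrum, if $\mu$ is an eigenvalue of $A+\Delta$ with $|\mu|=\rho(A+\Delta)$ and $\lambda\in\sigma(A)$ is nearest to $\mu$, then $\rho(A+\Delta)\le|\lambda|+|\mu-\lambda|\le\rho(A)+|\mu-\lambda|$; symmetrically one controls $\rho(A)-\rho(A+\Delta)$ through the dominant eigenvalue of $A$. Hence it suffices to show that every eigenvalue of $A+\Delta$ sits within $C\|\Delta\|$ of $\sigma(A)$, with $C$ depending only on $A$.

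To obtain this one-sided spectral-variation bound I would argue through the resolvent. Take $\mu\in\sigma(A+\Delta)$; if $\mu\in\sigma(A)$ there is nothing to do, so suppose $\mu\notin\sigma(A)$. Factoring $A+\Delta-\mu I=(A-\mu I)\bigl(I+(A-\mu I)^{-1}\Delta\bigr)$, singularity of the left-hand side with $A-\mu I$ invertible forces $I+(A-\mu I)^{-1}\Delta$ to be singular, whence $\|(A-\mu I)^{-1}\|\,\|\Delta\|\ge 1$, i.e.
\[
\bigl\|(A-\mu I)^{-1}\bigr\|^{-1}\le\|\Delta\|.
\]
Thus the whole argument reduces to lower-bounding the smallest singular value $\|(A-\mu I)^{-1}\|^{-1}$ by a fixed multiple of $\dist(\mu,\sigma(A))$.

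Here I would diagonalize $A=V\Lambda V^{-1}$ with $\Lambda=\mathrm{diag}(\lambda_1,\dots,\lambda_n)$. Then $(A-\mu I)^{-1}=V(\Lambda-\mu I)^{-1}V^{-1}$ gives
\[
\bigl\|(A-\mu I)^{-1}\bigr\|\le\|V\|\,\|V^{-1}\|\,\bigl\|(\Lambda-\mu I)^{-1}\bigr\|=\frac{\kappa(V)}{\dist(\mu,\sigma(A))},
\]
where $\kappa(V)=\|V\|\,\|V^{-1}\|$. Combining the two displays yields $\dist(\mu,\sigma(A))\le\kappa(V)\|\Delta\|$ for each $\mu\in\sigma(A+\Delta)$, and feeding this (together with the analogous estimate for the dominant eigenvalue of $A$, whose perturbation is Lipschitz because $A$ is non-defective) into the first paragraph delivers (\ref{contispectral}) with $C:=\kappa(V)$, a quantity determined by $A$ alone. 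The hypothesis $\|\Delta\|<1$ is used only to keep $A+\Delta$ in a bounded regime so that these nearest-eigenvalue comparisons are uniform.

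The step I expect to be the main obstacle is precisely the diagonalization: the clean Lipschitz constant $\kappa(V)$ is available only when $A$ has a full set of eigenvectors, and near a defective eigenvalue the resolvent grows faster than $1/\dist(\mu,\sigma(A))$, so the factor relating $\|(A-\mu I)^{-1}\|^{-1}$ to $\dist(\mu,\sigma(A))$ must then be extracted from the Jordan structure of $A$ and tracked carefully to keep the constant dependent on $A$ only. In the setting of Section~\ref{ext} where the lemma is applied, $A$ is the limiting iteration matrix and its eigen-structure is under control, so the diagonalizable route supplies the required $C$.
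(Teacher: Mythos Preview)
Your route is entirely different from the paper's, and in fact it is the only one of the two that really addresses the spectral radius. The paper's argument opens with the identity $|\rho(A+\Delta)-\rho(A)|=\bigl|\sqrt{\|(A+\Delta)^\top(A+\Delta)\|}-\sqrt{\|A^\top A\|}\,\bigr|$; but $\sqrt{\|M^\top M\|}$ is the paper's definition of $\|M\|$, not of $\rho(M)$, so what is actually established there is $\bigl|\,\|A+\Delta\|-\|A\|\,\bigr|\le C\|\Delta\|$ for the operator norm (a statement that already follows from the reverse triangle inequality with $C=1$). Your Bauer--Fike resolvent argument, by contrast, genuinely controls the eigenvalues and delivers the Lipschitz constant $\kappa(V)$ whenever $A$ is diagonalizable; it is the mathematically correct approach to the stated claim.

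The obstacle you flag in your last paragraph, however, cannot be removed by ``tracking the Jordan structure carefully'': for defective $A$ the lemma is false as stated. With $A=\bigl(\begin{smallmatrix}0&1\\0&0\end{smallmatrix}\bigr)$ and $\Delta=\bigl(\begin{smallmatrix}0&0\\\epsilon&0\end{smallmatrix}\bigr)$ one has $\rho(A)=0$, $\rho(A+\Delta)=\sqrt{\epsilon}$ and $\|\Delta\|=\epsilon$, so no fixed $C$ can make $\sqrt{\epsilon}\le C\epsilon$ hold for all $\epsilon\in(0,1)$; the best a Jordan-form analysis yields is a H\"older bound $C\|\Delta\|^{1/k}$ with $k$ the size of the largest Jordan block at a dominant eigenvalue. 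Your closing remark is therefore the right salvage: in Theorem~\ref{theorem73} the lemma is invoked only to conclude that $\rho(\tilde S^{(m)})>1$ once $\rho(S^{(m)})>1$ and $\|\Delta_S\|$ is small, and for that mere continuity of $\rho$ at the specific matrix $S^{(m)}$ suffices, which follows from the continuous dependence of the roots of the characteristic polynomial on its coefficients without any Lipschitz constant.
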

\begin{proof}
First, using the triangle inequality, we have
$$
\left|\|(A+\Delta)^\top (A+\Delta)\|-\|A^\top A\| \right|\le \|\Delta^\top A+A^\top \Delta+\Delta^\top \Delta\| \le(2\|A\|+\|\Delta\|)\|\Delta\|\le(2\|A\|+1)\|\Delta\|.
$$
Then, it follows from the definition of the spectral radius $\rho(\cdot)$ that
\begin{eqnarray}
\left|\rho(A+\Delta)-\rho(A)\right|&=&\left| \sqrt{\|(A+\Delta)^\top (A+\Delta)\| }-\sqrt{ \|A^\top A\| }\right| =\left|\frac{ \|(A+\Delta)^\top (A+\Delta)\| -  \|A^\top A\|}{\sqrt{\|(A+\Delta)^\top (A+\Delta)\| }+\sqrt{ \|A^\top A\| }} \right|\nonumber\\
&\le&\frac{1}{\sqrt{ \|A^\top A\| }}(2\|A\|+1)\|\Delta\|.
\end{eqnarray}
Thus, the inequality (\ref{contispectral}) holds with $C:=\frac{1}{\sqrt{ \|A^\top A\| }}(2\|A\|+1)$ which is positive and only dependent on the matrix $A$.
\end{proof}

\section{Convergence}\label{conv}

In this section, we prove the convergence of the e-ADMM \eqref{EADMM} for $m\ge3$ under the mentioned assumptions on the model (\ref{mini3}) with a certain restriction on the penalty parameter $\beta$. This is the main result of this paper. As mentioned, the proof is highly nontrivial. So we organize the discussion into several subsections. The roadmap of the proof is also reflected by the titles of these subsections.

\subsection{Discerning the difference of an iterate from a solution point}

We intend to observe the iterate $w^{k+1}$ generated by the e-ADMM \eqref{EADMM} and quantity its difference from a solution point in ${\cal W}^*$ in terms of the variational inequality representation (\ref{MSVII}) of the optimality condition. Since the iterate $w^{k+1}$ generated by the scheme \eqref{EADMM} can be expressed in the variational inequality form, it is possible to compare it with the variational inequality representation (\ref{MSVII}) of the optimality condition of the model (\ref{mini3}) and so discern the difference of the iterate $w^{k+1}$ from a solution point in ${\cal W}^*$. More precisely, we can show that this difference can be measured by some crossing terms and hence we need to carefully analyze these crossing terms. The following lemma follows from the first-order optimality conditions of the subproblems in the e-ADMM (\ref{EADMM}).

\begin{lemma}\label{lemma31}
Suppose Assumptions \ref{AS1} and \ref{AS3} hold. Let $\{w^k\}$ be the sequence generated by the e-ADMM \eqref{EADMM}. Then, we have
$x_i^{k+1}\in{\cal X}_i$ $(i=1,\ldots,m)$ and
\begin{eqnarray}\label{x3e2}
\theta_i(x_i)-\theta_i(x_i^{k+1})-
(x_i-x_i^{k+1})^\top A_i^\top[z^{k+1}-\beta \sum_{j=i+1}^m A_j(x_j^k-x_j^{k+1})]\ge \frac{\mu_i}{2}\|x_i-x_i^{k+1}\|^2,                                                                        \end{eqnarray}
with $\mu_{1}=0$, $\mu_2=0$ and $\mu_i >0$ for $i=3,\ldots,m$.
\end{lemma}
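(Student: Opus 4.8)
The plan is to derive (\ref{x3e2}) directly from the first-order optimality condition of each subproblem in (\ref{EADMM}), followed by a single substitution that invokes the multiplier update (\ref{lap}). First I would fix $i\in\{1,\ldots,m\}$ and write the $i$-th subproblem as the minimization over $\mathcal{X}_i$ of $\theta_i(x_i)+h_i(x_i)$, where $h_i(x_i)$ collects the linear and quadratic penalty terms of $\mathcal{L}_{\beta}$ with all other blocks frozen (the blocks $j<i$ at their $(k+1)$-values and the blocks $j>i$ at their $k$-values). Since $x_i^{k+1}$ minimizes a strongly convex function over the convex set $\mathcal{X}_i$, feasibility $x_i^{k+1}\in\mathcal{X}_i$ is immediate, and there exists a subgradient $g\in\partial\theta_i(x_i^{k+1})$ with $(x_i-x_i^{k+1})^\top\big(g+\nabla h_i(x_i^{k+1})\big)\ge 0$ for all $x_i\in\mathcal{X}_i$. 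Combining this with the strong-convexity inequality $\theta_i(x_i)\ge\theta_i(x_i^{k+1})+g^\top(x_i-x_i^{k+1})+\frac{\mu_i}{2}\|x_i-x_i^{k+1}\|^2$ (with $\mu_1=\mu_2=0$ by the convention recorded after Assumption \ref{AS3}) eliminates $g$ and yields
$$\theta_i(x_i)-\theta_i(x_i^{k+1})+(x_i-x_i^{k+1})^\top\nabla h_i(x_i^{k+1})\ge\frac{\mu_i}{2}\|x_i-x_i^{k+1}\|^2.$$

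Next I would compute $\nabla h_i(x_i^{k+1})$ explicitly, obtaining $\nabla h_i(x_i^{k+1})=-A_i^\top z^k+\beta A_i^\top r_i$, where $r_i:=\sum_{j=1}^{i-1}A_j x_j^{k+1}+A_i x_i^{k+1}+\sum_{j=i+1}^{m}A_j x_j^k-b$ is the constraint residual evaluated at the mixed iterate used by the $i$-th subproblem. The decisive algebraic step is to eliminate $z^k$ in favor of $z^{k+1}$ via (\ref{lap}): writing $z^k=z^{k+1}+\beta r^{k+1}$ with $r^{k+1}:=\sum_{j=1}^{m}A_j x_j^{k+1}-b$, I get $\nabla h_i(x_i^{k+1})=-A_i^\top z^{k+1}+\beta A_i^\top\big(r_i-r^{k+1}\big)$. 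A direct cancellation then gives $r_i-r^{k+1}=\sum_{j=i+1}^{m}A_j(x_j^k-x_j^{k+1})$, since the first two groups of $r_i$ coincide termwise with those of $r^{k+1}$ and only the tail blocks $j>i$ differ between their $k$- and $(k+1)$-values. Substituting this back into the displayed inequality reproduces (\ref{x3e2}) exactly.

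Since the statement is essentially a reformulation of the subproblem optimality conditions, I do not expect a genuine obstacle; the only point requiring care is the consistent bookkeeping of which blocks enter at iteration $k$ versus $k+1$ inside $r_i$, together with the sign convention in (\ref{lap}). The one structural ingredient worth isolating is the passage from the first-order optimality of a strongly convex minimization over a convex set to the ``$\ge\frac{\mu_i}{2}\|\cdot\|^2$'' variational inequality: it is the coupling of the optimality condition with the strong-convexity inequality that transfers the modulus $\mu_i$ onto the right-hand side, and it is precisely this quadratic term that the subsequent convergence analysis will harvest to compensate for the lack of strong convexity in $\theta_1$ and $\theta_2$.
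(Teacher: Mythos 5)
Your proposal is correct and follows essentially the same route as the paper's proof: write the first-order optimality condition of the $x_i$-subproblem as a variational inequality carrying the modulus $\mu_i$, then substitute the multiplier update \eqref{lap} to trade $z^k$ for $z^{k+1}$, with the residual cancellation $r_i-r^{k+1}=\sum_{j=i+1}^m A_j(x_j^k-x_j^{k+1})$ that the paper performs implicitly. Your version merely makes explicit the subgradient elimination and the bookkeeping of $k$ versus $k+1$ blocks, which the paper leaves to the reader.
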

\begin{proof} According to the optimality condition of the $x_i$-subproblem (\ref{xia}), we have $x^{k+1}_i\in{\cal X}_i$ such that
\begin{eqnarray}
\theta_i(x_i)-\theta_i(x_i^{k+1})-
(x_i-x_i^{k+1})^\top A_i^\top[z^k-\beta (\sum_{j=1}^i A_j x_j^{k+1} +\sum_{j=i+1}^m A_j x_j^k-b)]\ge \frac{\mu_i}{2}\|x_i-x_i^{k+1}\|^2,\forall\;x_i\in{\cal X}_i. \nonumber
\end{eqnarray}
Substituting the equation \eqref{lap} into the last inequality, we obtain the assertion (\ref{x3e2}).
\end{proof}

Recall the characterization of ${\cal W}^*$ in (\ref{MSVII}). The following lemma reflects the discrepancy of $w^{k+1}$ from a solution point in ${\cal W}^*$.
\begin{lemma} \label{Lemma32}
Suppose Assumptions \ref{AS1} and \ref{AS3} hold. Let $\{w^k\}$ be the sequence generated by the e-ADMM \eqref{x1a}-\eqref{lap}. Then, we have
\begin{eqnarray}\label{Lem1}
 &&\theta(u)-\theta(u^{k+1})+(w-w^{k+1})^\top F(w^{k+1}) +\frac{1}{\beta}(z-z^{k+1})^\top (z^{k+1}-z^k)+\beta(w- w^{k+1})^\top M({w}^{k}-w^{k+1})\nonumber \\
 &&\ge
 \sum_{i=3}^m\frac{\mu_i}{2}\|x_i^{k+1}-x_i\|^2, \;\; \forall w\in {\cal W}.
\end{eqnarray}
\end{lemma}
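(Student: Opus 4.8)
The plan is to start from the per-block variational inequalities established in Lemma~\ref{lemma31} and aggregate them into a single inequality over the full variable $w$, then reorganize the resulting crossing terms into the compact matrix form appearing in \eqref{Lem1}. First I would sum the $m$ inequalities \eqref{x3e2} over $i=1,\ldots,m$, which immediately produces $\theta(u)-\theta(u^{k+1})+\sum_{i=3}^m\tfrac{\mu_i}{2}\|x_i-x_i^{k+1}\|^2$ on the strong-convexity side and, on the left, a collection of inner-product terms of the form $-(x_i-x_i^{k+1})^\top A_i^\top z^{k+1}$ together with the correction terms $(x_i-x_i^{k+1})^\top A_i^\top\beta\sum_{j=i+1}^m A_j(x_j^k-x_j^{k+1})$.

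Next I would identify the ``clean'' part of this sum with the variational-inequality pairing. The terms $-(x_i-x_i^{k+1})^\top A_i^\top z^{k+1}$ summed over $i$ reconstruct exactly the primal components of $(w-w^{k+1})^\top F(w^{k+1})$ as defined in \eqref{MD-F}. The dual component of $F$, namely $\sum_i A_i x_i^{k+1}-b$, together with the factor $(z-z^{k+1})$ must be supplied separately: here I would invoke the multiplier update \eqref{lap}, which gives $\sum_i A_i x_i^{k+1}-b=-\tfrac1\beta(z^{k+1}-z^k)$, so that the dual row of $(w-w^{k+1})^\top F(w^{k+1})$ equals $-\tfrac1\beta(z-z^{k+1})^\top(z^{k+1}-z^k)$. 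Moving this to the other side is precisely what produces the term $\tfrac1\beta(z-z^{k+1})^\top(z^{k+1}-z^k)$ in \eqref{Lem1}.

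The remaining correction terms, $\beta\sum_{i=1}^m(x_i-x_i^{k+1})^\top A_i^\top\sum_{j=i+1}^m A_j(x_j^k-x_j^{k+1})$, are where the matrix $M$ enters. I would observe that this double sum, running over all pairs $i<j$ with the block entry $A_i^\top A_j$, is exactly the quadratic-style bilinear form $\beta(w-w^{k+1})^\top M(w^k-w^{k+1})$, where $M$ is the strictly block-upper-triangular matrix defined in \eqref{MatrixM} (its zero first and last block rows/columns match the fact that $x_1$ does not appear as a ``$j$'' with a $z$-slot and $z$ carries no $A$-coupling). Verifying that the sparsity pattern and the entries $A_i^\top A_j$ of $M$ reproduce this sum coefficient-by-coefficient is the bookkeeping heart of the argument.

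I expect the main obstacle to be this last matching step: one must be careful that the index ranges in $\sum_{j=i+1}^m$ align with the strictly-upper-triangular structure of $M$, that the padding zero rows/columns (accounting for the $x_1$ block and the $z$ block) are placed consistently with the dimension $(n+l)\times(n+l)$, and that the vector $w-w^{k+1}$ is paired with $w^k-w^{k+1}$ in the correct order (so that the asymmetry of $M$ is respected rather than accidentally symmetrized). Once the bilinear form is confirmed to equal $\beta(w-w^{k+1})^\top M(w^k-w^{k+1})$, collecting the three reorganized pieces yields \eqref{Lem1} directly, with the inequality direction preserved since every summand from \eqref{x3e2} points the same way.
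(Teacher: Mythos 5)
Your proposal is correct and follows essentially the same route as the paper: the paper likewise sums the per-block inequalities \eqref{x3e2} over $i=1,\ldots,m$, incorporates the multiplier relation \eqref{lap} (stated there as the inequality \eqref{slz}, which is your identity $\sum_i A_ix_i^{k+1}-b=\frac{1}{\beta}(z^k-z^{k+1})$ in disguise), and identifies the residual crossing terms with the bilinear form $\beta(w-w^{k+1})^\top M(w^k-w^{k+1})$ via the definitions of $F$ in \eqref{MD-F} and $M$ in \eqref{MatrixM}. Your bookkeeping of the $M$-form is right (only note that it is the first block \emph{column} of $M$ that vanishes, not the first block row, since $x_1$ never occurs as an index $j>i$).
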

\begin{proof}
First, it follows from (\ref{lap}) that
\begin{eqnarray}
\label{slz}(z-z^{k+1})^\top [\sum_{i=1}^m A_i x_i^{k+1}-b-\frac{1}{\beta}(z^k-z^{k+1})]\ge0, \;\; \forall z \in \Re^l.
\end{eqnarray}
Combining the inequalities (\ref{x3e2}) for $i=1,\ldots,m$, with the above inequality, we have
\begin{eqnarray}\label{optint}
\left\{\begin{array}{l}
\theta_1(x_1)-\theta_1(x_1^{k+1})-(x_1-x_1^{k+1})^\top A_1^\top[z^{k+1}-\beta \sum_{j=2}^m A_j(x_j^k-x_j^{k+1})] \ge 0,\\
\theta_2(x_2)-\theta_2(x_2^{k+1})-(x_2-x_2^{k+1})^\top A_2^\top[ z^{k+1}-\beta \sum_{j=3}^m A_j(x_j^k-x_j^{k+1})]\ge 0,\\
\qquad\qquad\qquad\cdots\;\cdots\;\cdots\qquad\cdots\;\cdots\;\cdots\\
\theta_i(x_i)-\theta_i(x_i^{k+1})-(x_i-x_i^{k+1})^\top A_i^\top[z^{k+1}-\beta \sum_{j=i+1}^m A_j(x_j^k-x_j^{k+1})]\ge \frac{\mu_i}{2}\|x_i-x_i^{k+1}\|^2,\\
\qquad\qquad\qquad\cdots\;\cdots\;\cdots\qquad\cdots\;\cdots\;\cdots\qquad\qquad\qquad\qquad \qquad\qquad i=3,\ldots,m,\\
(z-z^{k+1})^\top [\sum_{i=1}^m A_i x_i^{k+1}-b-\frac{1}{\beta}(z^k-z^{k+1})]\ge0.
\end{array}\right.\forall \; w\in{\cal W}.
\end{eqnarray}
Adding all these inequalities together and using the definitions of $F$ in (\ref{MD-F}) and $M$ in (\ref{MatrixM}), we immediately obtain the assertion (\ref{Lem1}).
\end{proof}

\subsection{Replacing the crossing terms by summable quadratic terms}

According to Lemma \ref{Lemma32} and the optimality condition (\ref{MSVII}), it is clear that our emphasis should be analyzing the crossing term
\begin{eqnarray}\label{crossing}
\frac{1}{\beta}(z-z^{k+1})^\top (z^{k+1}-z^k)+\beta(w- w^{k+1})^\top M({w}^{k}-w^{k+1})
 \end{eqnarray}
which gives the difference of the iterate $w^{k+1}$ from a solution point in ${\cal W}^*$.
As we shall show later, the first term in (\ref{crossing}) can be handled easily, whereas the second one should be sophisticatedly treated. This is indeed the most technical part in the paper. We start from the following lemma.

\begin{lemma}\label{lemma31} Suppose Assumptions \ref{AS1} and \ref{AS3} hold. For the iterative sequence $\{w^k\}$ generated by the e-ADMM \eqref{EADMM}, we have
\begin{eqnarray}\label{mono3}
&&\left\langle z^{k+1}-  z^k,A_ix_i^{k+1}-A_ix_i^k \right\rangle\ge-\beta (A_i x_i^{k+1}-A_i x_i^k)^\top [\sum_{j=i+1}^mA_j (x_j^{k+1}-x_j^k)-\sum_{j=i+1}^m A_j(x_j^k-x_j^{k-1})]\nonumber\\
&&+\mu_i\|x_i^k-x_i^{k+1}\|^2 ,\;\; \;\;\;\;i=1,\ldots,m,
\end{eqnarray}
where $\mu_{1}=0$, $\mu_2=0$ and $\mu_i >0$ $(i=3,\ldots,m)$.
\end{lemma}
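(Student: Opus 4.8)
The plan is to derive this monotonicity-type inequality directly from the subproblem optimality condition (\ref{x3e2}) of the preceding lemma, by writing it at two consecutive iterations and combining them through a standard cross-substitution. Since (\ref{x3e2}) holds for every $x_i\in\mathcal{X}_i$, the whole argument is elementary once the correct feasible points are inserted; no new machinery beyond (\ref{x3e2}) is needed.

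First I would record (\ref{x3e2}) for the update producing $x_i^{k+1}$ (the step $k\to k+1$),
\[
\theta_i(x_i)-\theta_i(x_i^{k+1})-(x_i-x_i^{k+1})^\top A_i^\top\Big[z^{k+1}-\beta\textstyle\sum_{j=i+1}^m A_j(x_j^k-x_j^{k+1})\Big]\ge\frac{\mu_i}{2}\|x_i-x_i^{k+1}\|^2,
\]
and the same inequality shifted by one index (the step $k-1\to k$), obtained by replacing $k$ with $k-1$ throughout,
\[
\theta_i(x_i)-\theta_i(x_i^{k})-(x_i-x_i^{k})^\top A_i^\top\Big[z^{k}-\beta\textstyle\sum_{j=i+1}^m A_j(x_j^{k-1}-x_j^{k})\Big]\ge\frac{\mu_i}{2}\|x_i-x_i^{k}\|^2.
\]
This shift is legitimate for $k\ge1$, so that the iterate at index $k-1$ is available. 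Next I would substitute $x_i=x_i^k$ into the first inequality and $x_i=x_i^{k+1}$ into the second; both substitutions are admissible because the e-ADMM iterates satisfy $x_i^k,x_i^{k+1}\in\mathcal{X}_i$. Then I add the two resulting inequalities.

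Upon adding, the function-value contributions $\theta_i(x_i^k)-\theta_i(x_i^{k+1})$ and $\theta_i(x_i^{k+1})-\theta_i(x_i^k)$ cancel exactly, and the two right-hand sides sum to $\mu_i\|x_i^{k+1}-x_i^k\|^2$. The surviving left-hand side collapses to $(x_i^{k+1}-x_i^k)^\top A_i^\top[\,z^{k+1}-z^k-\beta S_1+\beta S_2\,]$, where $S_1=\sum_{j=i+1}^m A_j(x_j^k-x_j^{k+1})$ and $S_2=\sum_{j=i+1}^m A_j(x_j^{k-1}-x_j^{k})$. Recognizing $(x_i^{k+1}-x_i^k)^\top A_i^\top(z^{k+1}-z^k)=\langle z^{k+1}-z^k,\,A_ix_i^{k+1}-A_ix_i^k\rangle$ and moving the $\beta$-terms to the right isolates precisely the inner product on the left of (\ref{mono3}).

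The single place needing care — the main, though modest, obstacle — is the sign bookkeeping in the residual term $\beta(A_ix_i^{k+1}-A_ix_i^k)^\top(S_1-S_2)$. Rewriting $S_1-S_2=-\big[\sum_{j=i+1}^m A_j(x_j^{k+1}-x_j^k)-\sum_{j=i+1}^m A_j(x_j^k-x_j^{k-1})\big]$ converts this into exactly $-\beta(A_ix_i^{k+1}-A_ix_i^k)^\top[\sum_{j=i+1}^m A_j(x_j^{k+1}-x_j^k)-\sum_{j=i+1}^m A_j(x_j^k-x_j^{k-1})]$, which is the bracket appearing in (\ref{mono3}). Since $\|x_i^{k+1}-x_i^k\|^2=\|x_i^k-x_i^{k+1}\|^2$ and $\mu_1=\mu_2=0$ (so that the strongly convex moduli are active only for $i\ge3$ while the claim holds with equality-free slack for $i=1,2$), this establishes (\ref{mono3}) for all $i=1,\dots,m$.
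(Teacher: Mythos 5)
Your proposal is correct and is essentially identical to the paper's own proof: the paper likewise substitutes $x_i:=x_i^k$ into (\ref{x3e2}) and $x_i:=x_i^{k+1}$ into (\ref{x3e2}) with $k$ replaced by $k-1$, adds the two inequalities so the $\theta_i$-terms cancel and the moduli terms sum to $\mu_i\|x_i^{k+1}-x_i^k\|^2$, and rearranges the $\beta$-terms to arrive at (\ref{mono3}). Your sign bookkeeping for the residual term checks out, so there is nothing to add.
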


\begin{proof}
Setting $x_i:=x_i^k$ in \eqref{x3e2}, we get
\begin{eqnarray}\nonumber
\theta_i(x^k_i)-\theta_i(x_i^{k+1})-(x_i^k -x_i^{k+1})^\top A_i^\top[z^{k+1}+\beta\sum_{j=i+1}^m A_j(x_j^{k+1}-x_j^k)]\ge\frac{\mu_i}{2}\|x_i^{k+1}-x_i^k\|^2.
\end{eqnarray}
Setting $x_i:=x_i^{k+1}$ in \eqref{x3e2} with $k:=k-1$, we have
\begin{eqnarray}\nonumber
\theta_i(x^{k+1}_i)-\theta_i(x_i^{k})-(x_i^{k+1} -x_i^{k})^\top A_i^\top[z^{k}+\beta\sum_{j=i+1}^m A_j(x_j^{k}-x_j^{k-1})]\ge\frac{\mu_i}{2}\|x_i^{k+1}-x_i^k\|^2.
\end{eqnarray}
Adding the above two inequalities, we obtain that for  $i=1,\ldots,m$, it holds
\begin{equation}\label{xme2}
\langle A_ix_i^{k+1}-A_ix_i^k,  z^{k+1}-  z^k\rangle \ge \mu_i\|x_i^k-x_i^{k+1}\|^2-\beta (A_i x_i^{k+1}-A_i x_i^k)^\top [\sum_{j=i+1}^m A_j (x_j^{k+1}-x_j^k)-\sum_{j=i+1}^m A_j(x_j^k-x_j^{k-1})].
\end{equation}
Note we use the convention $\sum_{i=m+1}^m a_i=0$. The assertion (\ref{mono3}) is proved.
\end{proof}

\begin{lemma}\label{lemma312} Suppose Assumptions \ref{AS1} and \ref{AS3} hold. For the iterative sequence $\{w^k\}$ generated by the e-ADMM \eqref{EADMM}, we have
\begin{eqnarray}\label{monowhole}
\langle z^k-z^{k+1}, \sum_{i=2}^m A_i (x_i^k-x_i^{k+1})\rangle &\ge& -\beta \sum_{i=2}^{m-1} (A_i x_i^{k+1}-A_i x_i^k)^\top[\sum_{j=i+1}^m A_j (x_j^{k+1}-x_j^k)-\sum_{j=i+1}^m A_j(x_j^k - x_j^{k-1})] \nonumber\\
&& +\sum_{i=3}^m \mu_i\|x_i^{k+1}-x_i^k\|^2\end{eqnarray}
\end{lemma}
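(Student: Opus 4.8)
The plan is to obtain \eqref{monowhole} by summing the per-block inequality \eqref{mono3} of the previous lemma over the blocks $i=2,\ldots,m$. The key observation linking the two statements is that the inner product on the left of \eqref{monowhole} is, block by block, exactly the quantity bounded in \eqref{mono3}, up to a harmless double sign flip: since $z^k-z^{k+1}=-(z^{k+1}-z^k)$ and $A_i(x_i^k-x_i^{k+1})=-(A_ix_i^{k+1}-A_ix_i^k)$, we have $\langle z^k-z^{k+1},A_i(x_i^k-x_i^{k+1})\rangle=\langle z^{k+1}-z^k,A_ix_i^{k+1}-A_ix_i^k\rangle$. Distributing the inner product over the sum $\sum_{i=2}^m A_i(x_i^k-x_i^{k+1})$ therefore writes the left-hand side of \eqref{monowhole} as $\sum_{i=2}^m\langle z^{k+1}-z^k,A_ix_i^{k+1}-A_ix_i^k\rangle$.

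Next I would apply \eqref{mono3} to each term of this sum and add the resulting $m-1$ inequalities. Two pieces of index bookkeeping then produce precisely the right-hand side of \eqref{monowhole}. On the modulus side, since $\mu_2=0$ by Assumption \ref{AS1}, the block $i=2$ contributes nothing and the sum $\sum_{i=2}^m\mu_i\|x_i^k-x_i^{k+1}\|^2$ collapses to $\sum_{i=3}^m\mu_i\|x_i^{k+1}-x_i^k\|^2$. On the cross-term side, for the last block $i=m$ the inner sum runs over the empty range $j=m+1,\ldots,m$ and, by the convention $\sum_{j=m+1}^m(\cdot)=0$ already adopted in the previous lemma, vanishes identically; hence the $\beta$-weighted sum effectively runs only over $i=2,\ldots,m-1$, matching the stated bound.

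There is essentially no analytic obstacle at this stage: the lemma is a direct consequence of accumulating the already-established single-block estimate \eqref{mono3}, and the only care required is in the range bookkeeping and in leaving the cross-terms $(A_ix_i^{k+1}-A_ix_i^k)^\top[\sum_{j=i+1}^m A_j(x_j^{k+1}-x_j^k)-\sum_{j=i+1}^m A_j(x_j^k-x_j^{k-1})]$ unsimplified rather than attempting to telescope or sign-resolve them here. The genuine difficulty, namely converting this aggregated cross-term into summable, sign-definite quadratic quantities suitable for the contraction argument, is deferred to the subsequent steps of the section, as the subsection title ``Replacing the crossing terms by summable quadratic terms'' indicates; the present lemma is merely the clean summation that sets those terms up.
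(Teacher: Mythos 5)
Your proof is correct and is exactly the paper's argument: the paper likewise proves \eqref{monowhole} by adding the per-block inequalities \eqref{xme2} (equivalently \eqref{mono3}) over $i=2,\ldots,m$, with the same double sign flip on the left-hand side, the same collapse of the modulus sum to $i\ge 3$ via $\mu_2=0$, and the same use of the empty-sum convention $\sum_{j=m+1}^m(\cdot)=0$ to restrict the cross-term sum to $i\le m-1$. Your bookkeeping is accurate and nothing is missing.
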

\begin{proof}
Adding inequalities (\ref{xme2}) from $i=2$ to $m$ together,
the assertion \eqref{monowhole} follows immediately.
\end{proof}

In the following lemma, we use the results in Lemmas \ref{Lemma32} and \ref{lemma312}; and represent the difference between the iterate $w^{k+1}$ from a solution point in ${\cal W}^*$ by some quadratic terms (see (\ref{deltAi}) and (\ref{deltzk})) and  crossing terms in terms of only $A_ix_i^{k+1}$ (see (\ref{Up1}) and (\ref{Up2})). This refined treatment turns out to be more convenient for successive operations over different subproblems; and it is the key to the proof of the main convergence results to be conducted.

\medskip
\begin{lemma}\label{lemma33}
Suppose Assumptions \ref{AS1} and \ref{AS3} hold. Let $\{w^k\}$ be the sequence generated by the e-ADMM \eqref{EADMM}. Then, for any $w\in {\cal W}$, we have
\begin{eqnarray}\label{convsol}
&&\theta(u)-\theta(u^{k+1})+(w-w^{k+1})^\top F(w^{}) +\beta(\sum_{i=1}^m A_i x_i-b)^\top \sum_{i=1}^m A_i (x_i^k-x_i^{k+1})\nonumber\\
&&\ge \frac{\beta}{2}\sum_{i=2}^m \triangle(x_i^{k+1}, x_i^{k}, x_i)+\frac{1}{2\beta}\triangle (z^{k+1},z^k,z) +\sum_{i=3}^m \left(\mu_i \|x_i^{k+1}-x_i^k\|^2+ \frac{\mu_i}{2}\|x_i^{k+1}-x_i\|^2\right)\nonumber\\
&&+ \Upsilon_1^{k+1} (x_i^{k+1},x_i^k,x_i^{k-1})+\Upsilon_2^{k+1} (x_i^{k+1},x_i^k,x_i),\nonumber\\
\end{eqnarray}
where
\begin{align}
&\triangle(x_i^{k+1}, x_i^{k}, x_i):=\|A_i x_i^{k+1}-A_i x_i\|^2-\|A_i x_i^k -A_i x_i\|^2+\|A_i x_i^k-A_i x_i^{k+1}\|^2,\label{deltAi}\\\nonumber\\
&\triangle(z^{k+1},z^{k},z):=\|z^{k+1}-z\|^2-\|z^k -z\|^2+\|z^k-z^{k+1}\|^2,\label{deltzk}\\\nonumber\\
&\Upsilon_1^{k+1} (x_i^{k+1},x_i^k,x_i^{k-1}):=-\beta \sum_{i=2}^{m-1}(A_i x_i^{k+1}-A_i x_i^k)^\top \left( \sum_{j=i+1}^m A_j (x_j^{k+1} -x_j^k)-  \sum_{j=i+1}^mA_j(x_j^k - x_j^{k-1}) \right),\label{Up1}\\\nonumber\\
&\Upsilon_2^{k+1} (x_i^{k+1},x_i^k,x_i):=\beta\sum_{i=3}^{m}\sum_{j=2}^{i-1}(A_i x_i^{k+1}-A_i x_i)^\top (A_j x_j^{k+1}-A_j x_j^k).\label{Up2}
\end{align}

\end{lemma}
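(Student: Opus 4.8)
The plan is to begin from inequality (\ref{Lem1}) and to convert its two crossing terms --- the multiplier inner product $\frac{1}{\beta}(z-z^{k+1})^\top(z^{k+1}-z^k)$ and the bilinear form $\beta(w-w^{k+1})^\top M(w^k-w^{k+1})$ --- into precisely the quantities (\ref{deltAi})--(\ref{Up2}) on the right of (\ref{convsol}), together with the residual-step term on its left. Two elementary facts carry most of the load. Since $F$ is affine and obeys (\ref{MonoF}), the identity $(w-w^{k+1})^\top(F(w^{k+1})-F(w))=0$ lets me replace $F(w^{k+1})$ by $F(w)$ in (\ref{Lem1}) at no cost. And the multiplier update (\ref{lap}), rewritten as $\sum_{i=1}^m A_ix_i^{k+1}-b=\frac{1}{\beta}(z^k-z^{k+1})$, is the bridge that turns occurrences of the current primal residual into multiplier differences; it is what links the residual-step term to the $z$-terms.

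First I would dispatch the multiplier crossing term. The polarization identity $2(a-b)^\top(b-c)=\|a-c\|^2-\|a-b\|^2-\|b-c\|^2$, applied with $a=z$, $b=z^{k+1}$, $c=z^k$, gives $\frac{1}{\beta}(z-z^{k+1})^\top(z^{k+1}-z^k)=-\frac{1}{2\beta}\triangle(z^{k+1},z^k,z)$ for $\triangle$ as in (\ref{deltzk}); moving this across reproduces the $\frac{1}{2\beta}\triangle(z^{k+1},z^k,z)$ term of (\ref{convsol}). After the $F$-replacement, the only surviving crossing term in (\ref{Lem1}) is the $M$-term, so the whole task reduces to matching $\beta(w-w^{k+1})^\top M(w^k-w^{k+1})$ against the residual-step term and the remaining right-hand pieces.

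The core is a bookkeeping of block crossing terms. Reading $M$ off (\ref{MatrixM}), the $M$-term equals $\beta\sum_{1\le i<j\le m}(A_ix_i-A_ix_i^{k+1})^\top(A_jx_j^k-A_jx_j^{k+1})$, a strictly block-upper-triangular sum in which the first block enters only through its error $A_1(x_1-x_1^{k+1})$, the first step $A_1(x_1^k-x_1^{k+1})$ being absent because the first column of $M$ is void. On the other side I would take the residual-step term, insert (\ref{lap}) in the form $\sum_i A_ix_i-b=\sum_i A_i(x_i-x_i^{k+1})+\frac{1}{\beta}(z^k-z^{k+1})$, and expand the bilinear part $\beta\bigl(\sum_i A_i(x_i-x_i^{k+1})\bigr)^\top\sum_j A_j(x_j^k-x_j^{k+1})$ into diagonal, strictly-upper and strictly-lower pieces. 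Blockwise polarization turns the diagonal into $\frac{\beta}{2}\sum_{i=2}^m\triangle(x_i^{k+1},x_i^k,x_i)$ of (\ref{deltAi}); the strictly-upper piece is identical to the $M$-term and cancels it; and the strictly-lower piece is exactly $\Upsilon_2^{k+1}$ of (\ref{Up2}). The leftover multiplier piece $(z^k-z^{k+1})^\top\sum_j A_j(x_j^k-x_j^{k+1})$ is then bounded below by (\ref{monowhole}), which supplies $\Upsilon_1^{k+1}$ of (\ref{Up1}) together with $\sum_{i=3}^m\mu_i\|x_i^{k+1}-x_i^k\|^2$. Assembling these with the $\sum_{i=3}^m\frac{\mu_i}{2}\|x_i^{k+1}-x_i\|^2$ terms already present in (\ref{Lem1}) yields (\ref{convsol}).

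I expect the main obstacle to be the combinatorics of the triangular index splits: one must keep them aligned so that the strictly-upper block sum cancels the $M$-term exactly and the strictly-lower block sum reproduces the nested ranges $3\le i\le m$, $2\le j\le i-1$ of $\Upsilon_2^{k+1}$ rather than a shifted set. The delicate point is the first block $x_1$, which is not among the essential variables $v$: since its column in $M$ vanishes, its step never enters the bilinear form, and one has to check that the residual-step expansion is consistent with this --- that $x_1$ contributes only through the error factor $A_1(x_1-x_1^{k+1})$ in the cancelling upper part, while only the steps of blocks $2,\ldots,m$ feed the $\triangle$- and $\Upsilon$-terms. This is also why it suffices to use the $i=2,\ldots,m$ summation (\ref{monowhole}) and the $i=1$ instance is not needed. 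Once this accounting is settled, what remains are the two elementary identities above and the single inequality (\ref{monowhole}).
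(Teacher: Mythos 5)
Your proposal is correct and takes essentially the same route as the paper: the paper's auxiliary matrix $N$ together with the identities (\ref{consolp1}) and (\ref{lem33p3}) implements exactly your upper/diagonal/lower triangular split (upper part cancelling the $M$-term, diagonal polarizing into the $\triangle$-terms of (\ref{deltAi}), lower part giving $\Upsilon_2$), and the remaining ingredients --- polarization for the $z$-crossing term, the update (\ref{lap}) turning the $w^{k+1}$-residual into multiplier differences, the inequality (\ref{monowhole}) supplying $\Upsilon_1$ plus the $\mu_i\|x_i^{k+1}-x_i^k\|^2$ terms, and (\ref{MonoF}) replacing $F(w^{k+1})$ by $F(w)$ --- coincide with the paper's proof. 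Your remark about the first block is also borne out by the paper: its own derivation (\ref{lem33f}) carries the factor $\sum_{i=2}^m A_i(x_i^k-x_i^{k+1})$, the $\sum_{i=1}^m$ appearing in the displayed statement (\ref{convsol}) being an inconsistency of the paper that is harmless in the subsequent applications (with $w=w^*$ the term vanishes, and the ergodic analysis uses the $\sum_{i=2}^m$ version via (\ref{LXi})).
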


\begin{proof}
First, using the definitions of $M$ and $N$ in (\ref{MatrixM}), it holds  that
\begin{eqnarray}\label{consolp1}
&&\beta(w-w^{k+1})^\top M(w^k-w^{k+1})+\beta(w-w^{k+1})^\top N(w^k-w^{k+1})\nonumber\\
&&=\beta\left( \sum_{i=1}^m(A_ix_i-b)-(A_i x_i^{k+1}-b))\right)^\top \sum_{i=2}^m(A_i x_i^k -A_i x_i^{k+1}).
\end{eqnarray}
Substituting (\ref{consolp1}) into the left-hand side of (\ref{Lem1}),  we obtain
\begin{eqnarray}\label{lem33p0}
&&\theta(u)-\theta(u^{k+1})+(w-w^{k+1})^\top F(w^{k+1}) +\frac{1}{\beta}(z-z^{k+1})^\top (z^{k+1}-z^k)\nonumber\\
&&+\beta(\sum_{i=1}^m A_i x_i -b)^\top \sum_{i=2}^m (A_i x_i^k - A_i x_i^{k+1})-\beta(\sum_{i=1}^{m}A_i x_i^{k+1}-b)^\top \sum_{i=2}^m (A_i x_i^k -  A_i x_i^{k+1})\nonumber\\
&&-\beta(w-w^{k+1})^\top N(w^k-w^{k+1})\ge \sum_{i=3}^m\frac{\mu_i}{2}\|x_i^{k+1}-x_i\|^2, \; \forall w\in {\cal W}.
\end{eqnarray}
On the other hand, using the definition of $\triangle(z^{k+1},z^{k},z)$ in (\ref{deltzk}), we have
\begin{eqnarray}\nonumber
\frac{1}{\beta}(z-z^{k+1})^\top (z^{k+1}-z^k)=-\frac{1}{2\beta}\triangle(z^{k+1},z^{k},z).\end{eqnarray}
Substituting the above identity into (\ref{lem33p0}) and using (\ref{x3a}), we obtain
\begin{eqnarray}\label{lem33p1}
&&\theta(u)-\theta(u^{k+1})+(w-w^{k+1})^\top F(w^{k+1})+\beta(\sum_{i=1}^m A_i x_i -b)^\top \sum_{i=2}^m( A_i x_i^k - A_i x_i^{k+1})\nonumber\\
&&\ge
\sum_{i=3}^m\frac{\mu_i}{2}\|x_i -x_i^{k+1}\|^2 +\beta(w-w^{k+1})^\top N(w^k-w^{k+1})+\frac{1}{2\beta}\triangle(z^{k+1},z^{k},z)\nonumber\\
&&+(z^k-z^{k+1})^\top \sum_{i=2}^m (A_i x_i^k
- A_i x_i^{k+1}), \;\forall\; w\in {\cal W}.
\end{eqnarray}
Next, substituting (\ref{monowhole}) into the last term of the right-hand of (\ref{lem33p1}),  it yields
\begin{eqnarray}\label{lem33p2}
&&\theta(u)-\theta(u^{k+1})+(w-w^{k+1})^\top F(w^{k+1})+\beta(\sum_{i=1}^m A_i x_i -b)^\top \sum_{i=2}^m( A_i x_i^k - A_i x_i^{k+1})\nonumber\\
&&\ge\sum_{i=3}^m\left[\frac{\mu_i}{2}\|x_i -x_i^{k+1}\|^2+\mu_i\|x_i^{k+1}-x_i^k\|^2\right]+\beta(w-w^{k+1})^\top N(w^k-w^{k+1})\nonumber\\
&&+\frac{1}{2\beta}\triangle(z^{k+1},z^{k},z)-\beta \sum_{i=2}^{m-1}(A_i x_i^{k+1} - A_i x_i^k)^\top\left [\sum_{j=i+1}^mA_j(x_j^{k+1}-x_j^k)-\sum_{j=i+1}^m A_j(x_j^{k}-x_j^{k-1})\right], \;\forall\; w\in{\cal W}.\nonumber\\
\end{eqnarray}
On the other hand, it follows from (\ref{deltAi}) and the definition of the matrix $N$ in (\ref{MatrixM}) that
\begin{eqnarray}\label{lem33p3}
&&\beta(w-w^{k+1})^\top N(w^k-w^{k+1})\nonumber\\
&&=\frac{\beta}{2}\sum_{i=2}^m \triangle(A_ix_i^{k+1}, A_ix_i^{k},A_i x_i) +\beta\sum_{i=3}^m\sum_{j=2}^{i-1}(A_i x_i^{k+1}-A_i x_i)^\top (A_j x_j^{k+1}-A_j x_j^k).
\end{eqnarray}
Substituting (\ref{lem33p3}) into (\ref{lem33p2}), we get
\begin{eqnarray}\label{lem33f}
&&\theta(u)-\theta(u^{k+1})+(w-w^{k+1})^\top F(w^{k+1})+\beta(\sum_{i=1}^m A_i x_i -b)^\top \sum_{i=2}^m( A_i x_i^k - A_x x_i^{k+1})\nonumber\\
&&\ge\sum_{i=3}^m\left[\frac{\mu_i}{2}\|x_i -x_i^{k+1}\|^2+\mu_i\|x_i^{k+1}-x_i^k\|^2\right]+\frac{\beta}{2}\sum_{i=2}^m \triangle(A_ix_i^{k+1}, A_ix_i^{k},A_i x_i)+\frac{1}{2\beta}\triangle(z^{k+1},z^{k},z)\nonumber\\
&&-\beta \sum_{i=2}^{m-1}(A_i x_i^{k+1} - A_i x_i^k)^\top \left(\sum_{j=i+1}^mA_j(x_j^{k+1}-x_j^k)-\sum_{j=i+1}^m A_j(x_j^{k}-x_j^{k-1})\right)\nonumber\\
&&+\beta\sum_{i=3}^m\sum_{j=2}^{i-1}(A_i x_i^{k+1}-A_i x_i)^\top (A_j x_j^{k+1}-A_j x_j^k), \;\forall\; w\in{\cal W}.\nonumber\\
\end{eqnarray}
Finally,  the assertion (\ref{convsol}) follows from  Lemma \ref{F-monotone} and inequality (\ref{lem33f}) immediately.
\end{proof}

\medskip
For succinctness, we temporarily skip the superscripts and the variables for $\Upsilon_i$ ($i=1,2$).
The next lemma focuses on analyzing the crossing terms
$\Upsilon_1$ and $\Upsilon_2$
in the right-hand side of (\ref{convsol}); and finding their lower bounds representable by negative quadratic terms.
The purpose of doing so is that the difference between the iterate $w^{k+1}$ and a solution point in ${\cal W}^*$ can be
completely represented by quadratic terms in a unified way. More specially, we decompose $\Upsilon_1$ and $\Upsilon_2$ into following several terms:
\begin{align}
& \Upsilon_1^{(1)}:=-\beta (A_2 x_2^{k+1}-A_2 x_2^k)^\top\left (\sum_{j=3}^m A_j (x_j^{k+1}-x_j^k)-\sum_{j=3}^m A_j (x_j^{k}-x_j^{k-1})\right)  \label{eq:ups11} \\
& \Upsilon_1^{(21)}:=-\beta\sum_{i=3}^{m-1} (A_i x_i^{k+1}-A_i x_i^k)^\top \sum_{j=i+1}^m A_j (x_j^{k+1}-x_j^k) \label{eq:ups121}\\
& \Upsilon_1^{(22)}:=\beta\sum_{i=3}^{m-1} (A_i x_i^{k+1}-A_i x_i^k)^\top \sum_{j=i+1}^m A_j (x_j^{k}-x_j^{k-1})\label{eq:ups122}\\
& \Upsilon_2^{(1)}:=\beta (A_2 x_2^{k+1}-A_2 x_2^k)^\top \sum_{i=3}^m(A_i x_i^{k+1}-A_i x_i)  \label{eq:ups21} \\
& \Upsilon_2^{(2)}:=\beta\sum_{j=3}^{m-1}\sum_{i=j+1}^m(A_i x_i^{k+1}-A_i x_i)^\top (A_j x_j^{k+1}-A_j x_j^k). \label{eq:ups122}
\end{align}
Then, we take a further analysis for each smaller term to get their lower bounds.

\medskip

\begin{lemma}
Suppose Assumptions \ref{AS1} and \ref{AS3} hold. Let $\{w^k\}$ be the sequence generated by the e-ADMM \eqref{EADMM}.
Then, for any $w\in {\cal W}$, we have the following assertions:

\begin{itemize}

\item [1)] For any scalars $a,b>0$, it holds
\begin{eqnarray}\label{Lemma341}
\Upsilon_1^{(1)}\ge \beta\left(-(a+b)\|A_2 x_2^k - A_2 x_2^{k+1}\|^2-\frac{1}{4a}\|\sum_{i=3}^m(A_i x_i^{k+1}-A_i x_i^k)\|^2-\frac{m-2}{4b}\sum_{i=3}^m\|A_i x_i^{k}-A_i x_i^{k-1})\|^2\right).\nonumber\\
\end{eqnarray}

\item [2)] The following identity holds:
\noindent\begin{eqnarray}\label{Lemma342}
\Upsilon_1^{(21)}=-\frac{\beta}{2}\left( \|\sum_{i=3}^m (A_i x_i^{k+1}-A_i x_i^k)\|^2-\sum_{i=3}^m\|A_i x_i^{k+1}-A_i x_i^k\|^2\right).
\end{eqnarray}
\item [3)] It holds that
\begin{eqnarray}
\label{Lemma343}
\Upsilon_1^{(22)}\ge-\frac{\beta}{2}\sum_{i=3}^{m-1}(m-i)\|A_i x_i^{k+1}- A_i x_i^k\|^2
-\frac{\beta}{2}\sum_{i=4}^m (i-3)\|A_i x_i^k-A_i x_i^{k-1}\|^2.
\end{eqnarray}
\item [4)] For any scalar $\delta>0$, it holds
\begin{eqnarray}\label{Lemma344}
&&\Upsilon_2^{(1)}\ge-\beta\left(\frac{m-2}{2\delta}\sum_{i=3}^m\|A_i x_i^{k+1}-A_i x_i\|^2
+\frac{\delta}{2}\|A_2 x_2^{k+1}-A_2 x_2^k\|^2\right).
\end{eqnarray}
\item [5)] It holds that
\begin{eqnarray}\label{Lemma345}
&&\Upsilon_2^{(2)}\ge-\frac{\beta}{2}\left(\sum_{i=3}^m(i-3)\|A_i x_i^{k+1}-A_i x_i\|^2+\sum_{i=3}^{m-1}(m-i)\|A_i x_i^{k+1}-A_i x_i^k\|^2\right).
\end{eqnarray}

\item [6)] $\Upsilon_1$ and $\Upsilon_2$ defined respectively in (\ref{Up1}) and (\ref{Up2}) satisfy the equations
\begin{eqnarray}
\label{UP1sum}
\Upsilon_1=\Upsilon_1^{(1)}+\Upsilon_1^{(21)}+\Upsilon_1^{(22)} \;\mbox{and}\;\Upsilon_2=\Upsilon_2^{(1)}+\Upsilon_2^{(2)}.
\end{eqnarray}
%
\end{itemize}
\end{lemma}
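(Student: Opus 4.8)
The plan is to derive all six assertions from three elementary tools: the parametrized Young inequality $2x^\top y\le \alpha\|x\|^2+\alpha^{-1}\|y\|^2$ (with its symmetric consequence $x^\top y\ge -\tfrac12\|x\|^2-\tfrac12\|y\|^2$ when $\alpha=1$), the Cauchy--Schwarz bound $\|\sum_i v_i\|^2\le p\sum_i\|v_i\|^2$ for a sum of $p$ vectors, and the polarization identity $\|\sum_i v_i\|^2=\sum_i\|v_i\|^2+2\sum_{i<j}v_i^\top v_j$. Each of the five $\Upsilon$-pieces in \eqref{eq:ups11}--\eqref{eq:ups21} is either a single inner product or a double sum of inner products, so the whole strategy reduces to choosing which tool to apply to each piece and then accounting for the resulting quadratic terms. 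It is convenient to abbreviate $v_i:=A_ix_i^{k+1}-A_ix_i^k$, $w_i:=A_ix_i^k-A_ix_i^{k-1}$ and $s_i:=A_ix_i^{k+1}-A_ix_i$ throughout.

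First I would dispatch the exact identity \eqref{Lemma342} for $\Upsilon_1^{(21)}$: the double sum $\sum_{i=3}^{m-1}v_i^\top\sum_{j=i+1}^m v_j$ equals $\sum_{3\le i<j\le m}v_i^\top v_j$, which the polarization identity turns exactly into $\tfrac12\bigl(\|\sum_{i=3}^m v_i\|^2-\sum_{i=3}^m\|v_i\|^2\bigr)$; multiplying by $-\beta$ yields \eqref{Lemma342}. For \eqref{Lemma341} I split $\Upsilon_1^{(1)}=-\beta p^\top q+\beta p^\top r$ with $p=A_2x_2^{k+1}-A_2x_2^k$, $q=\sum_{j=3}^m A_j(x_j^{k+1}-x_j^k)$ and $r=\sum_{j=3}^m A_j(x_j^k-x_j^{k-1})$, apply Young with parameter $a$ to the $q$-term and parameter $b$ to the $r$-term (using $|p^\top q|\le a\|p\|^2+\tfrac{1}{4a}\|q\|^2$), and finally bound $\|r\|^2\le(m-2)\sum_{j=3}^m\|A_j(x_j^k-x_j^{k-1})\|^2$ by Cauchy--Schwarz. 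Assertion \eqref{Lemma344} is the same move with a single parameter $\delta$ on $\Upsilon_2^{(1)}=\beta p^\top s$, $s=\sum_{i=3}^m s_i$, again closing with Cauchy--Schwarz on $\|s\|^2$.

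The heart of the lemma is \eqref{Lemma343} and \eqref{Lemma345}, where the cross terms are genuine indexed double sums, namely $\Upsilon_1^{(22)}=\beta\sum_{3\le i<j\le m}v_i^\top w_j$ and $\Upsilon_2^{(2)}=\beta\sum_{3\le j<i\le m}s_i^\top v_j$. Here I apply the symmetric Young bound $v_i^\top w_j\ge -\tfrac12\|v_i\|^2-\tfrac12\|w_j\|^2$ to each pair and then count multiplicities: in \eqref{Lemma343}, $\|v_i\|^2$ is charged once for every $j>i$, i.e.\ $(m-i)$ times, while $\|w_j\|^2$ is charged once for every index $i$ with $3\le i<j$, i.e.\ $(j-3)$ times, which reproduces exactly the weights $(m-i)$ and $(i-3)$ after relabeling; the analogous count (now $(i-3)$ on $\|s_i\|^2$ and $(m-j)$ on $\|v_j\|^2$) gives \eqref{Lemma345}. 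This bookkeeping of how often each squared norm is charged across the triangular index set is the step that requires the most care, and is where I expect any slip to occur.

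Finally, \eqref{UP1sum} is pure algebra and I would actually verify it first, since it certifies that the five preceding bounds really do combine into bounds on the original $\Upsilon_1,\Upsilon_2$. For $\Upsilon_1$ in \eqref{Up1} I split the outer index into the single term $i=2$ (which is $\Upsilon_1^{(1)}$) and the block $i=3,\ldots,m-1$, and within the latter separate the two inner sums over $A_j(x_j^{k+1}-x_j^k)$ and $A_j(x_j^k-x_j^{k-1})$ to recover $\Upsilon_1^{(21)}$ and $\Upsilon_1^{(22)}$. For $\Upsilon_2$ in \eqref{Up2} I peel off the $j=2$ term, which collapses to $\beta(A_2x_2^{k+1}-A_2x_2^k)^\top\sum_{i=3}^m s_i=\Upsilon_2^{(1)}$, and reorder the remaining double sum over the region $3\le j<i\le m$ into the form $\Upsilon_2^{(2)}$. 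No inequalities are needed for this last assertion.
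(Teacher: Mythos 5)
Your proposal is correct and takes essentially the same route as the paper's proof: the identical Young/Cauchy--Schwarz estimates with parameters $a,b$ for part 1 and $\delta$ for part 4 (closing with $\|\sum_i v_i\|^2\le (m-2)\sum_i\|v_i\|^2$), the identical pairwise bound $x^\top y\ge-\tfrac12\|x\|^2-\tfrac12\|y\|^2$ with the same multiplicity counts $(m-i)$, $(i-3)$ over the triangular index sets for parts 3 and 5, and the same algebraic splitting (peeling off the $i=2$, resp.\ $j=2$, terms) for part 6, which the paper states without detail. The only cosmetic difference is in part 2, where you apply the polarization identity once globally while the paper telescopes $x^\top y=\tfrac12\bigl(\|x+y\|^2-\|x\|^2-\|y\|^2\bigr)$ termwise over $i=3,\ldots,m-1$ --- these are the same computation.
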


\begin{proof} 1) Using Cauchy-Schwarz inequality, for any positive scalars $a$ and $b$, we have
\begin{eqnarray*}\label{t1}
\Upsilon_1^{(1)}&=&-\beta (A_2 x_2^{k+1}-A_2 x_2^k)^\top \sum_{j=3}^m A_j (x_j^{k+1}-x_j^k)+\beta (A_2 x_2^{k+1}-A_2 x_2^k)^\top \sum_{j=3}^m A_j (x_j^{k}-x_j^{k-1})\nonumber\\
&\ge& \beta\left(-a\|A_2 x_2^k - A_2 x_2^{k+1}\|^2-\frac{1}{4a}\|\sum_{i=3}^m( A_i x_i^{k+1}-A_i x_i^k)\|^2\right)\nonumber\\
&&+\beta\left(-b\|A_2 x_2^k - A_2 x_2^{k+1}\|^2-\frac{m-2}{4b}\sum_{i=3}^m\|A_i x_i^{k}-A_i x_i^{k-1}\|^2\right).
\end{eqnarray*}
 Then, the inequality (\ref{Lemma341}) follows directly.

2)
Invoking the identity $x^\top y=\frac{1}{2}(\|x+y\|^2-\|x\|^2-\|y\|^2)$, we know
\begin{eqnarray*}\label{t2}
\Upsilon_1^{(21)}&=&-\frac{\beta}{2} \sum_{i=3}^{m-1}\left( \left\|\sum_{j=i}^m(A_j x_j^{k+1}-A_j x_j^k)\right\|^2-\left\|A_i x_i^{k+1}-A_i x_i^k\right\|^2 - \left\|\sum_{j=i+1}^m (A_j x_j^{k+1}-A_j x_j^k)\right\|^2\right)\nonumber\\
&=&-\frac{\beta}{2}\left( \|\sum_{i=3}^m (A_i x_i^{k+1}-A_i x_i^k)\|^2-\sum_{i=3}^m\|A_i x_i^{k+1}-A_i x_i^k\|^2\right).
\end{eqnarray*}
Then, the inequality (\ref{Lemma342}) is proved.

3) Using Cauchy-Schwarz inequality, we have
\begin{eqnarray*}
\label{t3}
\Upsilon_1^{(22)}&\ge&-\frac{\beta}{2}\sum_{i=3}^{m-1} \sum_{j=i+1}^m\left( \|A_i x_i^{k+1}- A_i x_i^k\|^2+\|A_j x_j^k-A_j x_j^{k-1}\|^2\right)\nonumber\\
&=&-\frac{\beta}{2}\sum_{i=3}^{m-1}(m-i)\|A_i x_i^{k+1}- A_i x_i^k\|^2-\frac{\beta}{2}\sum_{i=4}^m (i-3)\|A_i x_i^k-A_i x_i^{k-1}\|^2.
\end{eqnarray*}
Thus, we obtain the inequality (\ref{Lemma343}).

4) Using Cauchy-Schwarz inequality, for any positive scalar $\delta$, we have
\begin{eqnarray*}\label{t4}
\Upsilon_2^{(1)}&\ge&-\beta\left(\frac{1}{2\delta}\|\sum_{i=3}^m( A_i x_i^{k+1}-A_i x_i)\|^2 +\frac{\delta}{2}\|A_2 x_2^{k+1}-A_2 x_2^k\|^2\right)\nonumber\\
&\ge&-\beta\left(\frac{m-2}{2\delta}\sum_{i=3}^m\|A_i x_i^{k+1}-A_i x_i\|^2+\frac{\delta}{2}\|A_2 x_2^{k+1}-A_2 x_2^k\|^2\right).
\end{eqnarray*}
Then, the inequality (\ref{Lemma344}) follows directly.

5) Again, using Cauchy-Schwarz inequality, it yields
\begin{eqnarray*}\label{t5}
\Upsilon_2^{(2)} &\ge& -\frac{\beta}{2}\sum_{j=3}^{m-1}\sum_{i=j+1}^m\left(\|A_j x_j^{k+1} - A_j x_j^k\|^2+\|A_i x_i^{k+1}-A_i x_i\|^2 \right)\nonumber\\
&=&-\frac{\beta}{2}\left(\sum_{i=3}^m(i-3)\|A_i x_i^{k+1}-A_i x_i\|^2+\sum_{i=3}^{m-1}(m-i)\|A_i x_i^{k+1}-A_i x_i^k\|^2\right).
\end{eqnarray*}
 Thus, the inequality (\ref{Lemma345}) is proved.

6) The assertion (\ref{UP1sum}) follows from the definitions of $\Upsilon_1$, $\Upsilon_2$, $\Upsilon_1^{(1)}$, $\Upsilon_1^{(21)}$,
 $\Upsilon_1^{(22)}$, $\Upsilon_2^{(1)}$ and $\Upsilon_2^{(2)}$ (see (\ref{Up1}), (\ref{Up2}) and (\ref{eq:ups11}-\ref{eq:ups122})), and some elementary calculations.
\end{proof}

With the previously proved lemmas, we can  derive
a favorable relationship for two consecutive iterates about their respective differences from a solution point in ${\cal W}^*$. This relationship is reflected by an inequality that is completely representable by quadratic terms without any crossing terms. It is thus easy to show that the sequence generated by the e-ADMM (\ref{EADMM}) is Fej\`{e}r monotone with respect to ${\cal W}^*$.

\begin{lemma} \label{lem36}
Suppose Assumptions \ref{AS1} and \ref{AS3} hold. Let $\{w^k\}$ be the sequence generated by the e-ADMM \eqref{EADMM}. For arbitrary positive scalars $a$, $b$, $\delta$, and any $w^*\in {\cal W}^*$, we have
\begin{eqnarray}
\label{convtrasol}
&&\frac{\beta}{2}\sum_{i=2}^m\|A_i x_i^{k+1}-A_i x^*_i\|^2+\frac{1}{2\beta}\|z^{k+1}-z^*\|^2+\beta\sum_{i=3}^m\left[\frac{(i-3)}{2}+\frac{m-2}{4b}\right]\|A_i x_i^{k+1}-A_i x_i^k\|^2\nonumber\\
&&\le \frac{\beta}{2}\sum_{i=2}^m\|A_i x_i^{k}-A_i x^*_i\|^2+\frac{1}{2\beta}\|z^{k}-z^*\|^2+\beta\sum_{i=3}^m\left[\frac{(i-3)}{2}+\frac{m-2}{4b}\right]\|A_i x_i^{k}-A_i x_i^{k-1}\|^2\nonumber\\
\quad &&-\sum_{i=2}^m C_i\|A_i x_i^{k+1}-A_i x_i^k\|^2-\frac{1}{2\beta}\|z^k-z^{k+1}\|^2-\sum_{i=3}^m \zeta_i\|A_i x_i^{k+1}-A_i x_i^*\|^2,\nonumber\\
\end{eqnarray}
where
\begin{align}
& C_2:=(\frac{1}{2}-(a+b)-\frac{\delta}{2})\beta,\qquad\qquad\qquad  \label{rho} \\
& C_i:=\frac{\mu_i}{\|A_i^\top A_i\|}-\beta\left((\frac{1}{4a}+\frac{1}{4b})(m-2)+\frac{3m-i-7}{2}\right),\; i=3,\ldots,m,  \label{Ci}\\
&\mbox{and}\nonumber\\
&\zeta_i:=\frac{\mu_i}{\|A_i^\top A_i\|}-\beta\left[ \frac{m-2}{2\delta}+\frac{(i-3)}{2}\right], \;i=3,\ldots,m.\label{zeta2}
\end{align}
\end{lemma}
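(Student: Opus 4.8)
The plan is to specialize Lemma~\ref{lemma33} at a solution point and then absorb every crossing term using the lower bounds already established, so that only squared norms survive and can be collected into the energy-decrease form (\ref{convtrasol}).

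First I would set $w=w^*$ with $w^*\in{\cal W}^*$ in the inequality (\ref{convsol}). Since $w^*$ is feasible, $\sum_{i=1}^m A_i x_i^*-b=0$, so the term $\beta(\sum_{i=1}^m A_i x_i-b)^\top\sum_{i=1}^m A_i(x_i^k-x_i^{k+1})$ vanishes and the left-hand side reduces to $\theta(u^*)-\theta(u^{k+1})+(w^*-w^{k+1})^\top F(w^*)$. Invoking the optimality variational inequality (\ref{MSVI-F}) at $w=w^{k+1}$ (admissible since $x_i^{k+1}\in{\cal X}_i$) and flipping the sign gives
$$\theta(u^*)-\theta(u^{k+1})+(w^*-w^{k+1})^\top F(w^*)\le -\sum_{i=3}^m\tfrac{\mu_i}{2}\|x_i^{k+1}-x_i^*\|^2.$$
Combining this with (\ref{convsol}) and transferring the resulting $\mu_i$-term to the right produces an inequality of the shape $0\ge(\text{quadratic part})+\Upsilon_1+\Upsilon_2$, where the quadratic part consists of $\tfrac{\beta}{2}\sum_{i=2}^m\triangle(x_i^{k+1},x_i^k,x_i^*)$, $\tfrac{1}{2\beta}\triangle(z^{k+1},z^k,z^*)$, and the two sums $\sum_{i=3}^m\mu_i\|x_i^{k+1}-x_i^k\|^2$ and $\sum_{i=3}^m\mu_i\|x_i^{k+1}-x_i^*\|^2$.

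Next I would eliminate the crossing terms. Using the decomposition (\ref{UP1sum}), I replace $\Upsilon_1$ and $\Upsilon_2$ (evaluated at $x_i=x_i^*$) by the lower bounds (\ref{Lemma341})--(\ref{Lemma345}). Two further reductions make everything comparable: the aggregated quantity $\|\sum_{i=3}^m(A_i x_i^{k+1}-A_i x_i^k)\|^2$, produced jointly by (\ref{Lemma341}) and (\ref{Lemma342}), is bounded via $\|\sum v_i\|^2\le(m-2)\sum\|v_i\|^2$; and the two $\mu_i$-sums are converted into $A_i$-weighted norms through $\|x\|^2\ge\|A_i x\|^2/\|A_i^\top A_i\|$. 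Each replacement only decreases the right-hand side, so the inequality persists. I would then expand $\triangle$ via (\ref{deltAi})--(\ref{deltzk}) and collect the coefficient of every squared norm: the differences $\|A_i x_i^{k+1}-A_i x_i^*\|^2-\|A_i x_i^k-A_i x_i^*\|^2$ and $\|z^{k+1}-z^*\|^2-\|z^k-z^*\|^2$ telescope into the main energy, $\tfrac{1}{2\beta}\|z^k-z^{k+1}\|^2$ appears with the stated coefficient, and the residual coefficients of $\|A_2 x_2^{k+1}-A_2 x_2^k\|^2$, $\|A_i x_i^{k+1}-A_i x_i^k\|^2$ and $\|A_i x_i^{k+1}-A_i x_i^*\|^2$ should assemble into exactly $C_2$, $C_i$ and $\zeta_i$ of (\ref{rho}), (\ref{Ci}) and (\ref{zeta2}), yielding (\ref{convtrasol}).

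The main obstacle is this last bookkeeping step, and in particular two interlocking verifications. One is that the coefficient of the carried term $\|A_i x_i^k-A_i x_i^{k-1}\|^2$, fed by the $b$-weighted part of (\ref{Lemma341}) and by (\ref{Lemma343}), matches \emph{exactly} the weight $\beta[\tfrac{i-3}{2}+\tfrac{m-2}{4b}]$ appearing in the last sum of the energy, so that the recursion is genuinely of Fej\`er type rather than merely bounded. The other is that the coefficient of $\|A_i x_i^{k+1}-A_i x_i^k\|^2$ receives contributions of opposite signs from the triangle identity, from (\ref{Lemma342}), and (with multiplicity $m-i$) from both (\ref{Lemma343}) and (\ref{Lemma345}), together with the $(m-2)$-factor generated by the aggregated-term reduction and the $\tfrac{m-2}{4b}$ coming back through the telescoping; checking that these collapse precisely to $-\beta\big(\tfrac{3m-i-7}{2}+(\tfrac{1}{4a}+\tfrac{1}{4b})(m-2)\big)$, i.e.\ to $C_i$ of (\ref{Ci}), is where essentially all the work lies.
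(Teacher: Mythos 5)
Your proposal is correct and takes essentially the same route as the paper's proof: the paper likewise substitutes the lower bounds (\ref{Lemma341})--(\ref{Lemma345}) via (\ref{UP1sum}) into (\ref{convsol}), applies $\|\sum_{i=3}^m v_i\|^2\le (m-2)\sum_{i=3}^m\|v_i\|^2$ to the aggregated term, uses (\ref{MSVI-F}) to obtain $\theta(u^*)-\theta(u^{k+1})+(w^*-w^{k+1})^\top F(w^*)\le -\sum_{i=3}^m\frac{\mu_i}{2}\|x_i^*-x_i^{k+1}\|^2$, and converts the $\mu_i$-terms through $\|x\|^2\ge \|A_ix\|^2/\|A_i^\top A_i\|$, differing from your write-up only in that it performs the substitutions first and sets $w:=w^*$ last. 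The bookkeeping you flag does close exactly as you describe: the carried weight $\beta\left[\frac{i-3}{2}+\frac{m-2}{4b}\right]$ is matched by the $b$-part of (\ref{Lemma341}) together with (\ref{Lemma343}), and the remaining coefficients assemble precisely into $C_2$, $C_i$ and $\zeta_i$ of (\ref{rho})--(\ref{zeta2}).
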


\begin{proof} First, substituting (\ref{Lemma341})-(\ref{Lemma345}) into (\ref{convsol}) and invoking (\ref{UP1sum}), we derive that
\begin{eqnarray}
\label{convtra}
&&\frac{\beta}{2}\sum_{i=2}^m\|A_i x_i^{k+1}-A_i x^*_i\|^2+\frac{1}{2\beta}\|z^{k+1}-z^*\|^2+\beta\sum_{i=3}^m\left[\frac{(i-3)}{2}+\frac{m-2}{4b}\right]\|A_i x_i^{k+1}-A_i x_i^k\|^2\nonumber\\
&&\le \frac{\beta}{2}\sum_{i=2}^m\|A_i x_i^{k}-A_i x^*_i\|^2+\frac{1}{2\beta}\|z^{k}-z^*\|^2+\beta\sum_{i=3}^m\left[\frac{(i-3)}{2}+\frac{m-2}{4b}\right]\|A_i x_i^{k}-A_i x_i^{k-1}\|^2\nonumber\\
&&-C_2\|A_2 x_2^{k+1}-A_2 x_2^k\|^2-\frac{1}{2\beta}\|z^k-z^{k+1}\|^2+\frac{\beta}{2}(1+\frac{1}{2a})\| \sum_{i=3}^m(A_i x_i^{k+1}-A_i x_i^k)\|^2\nonumber\\
&&-\sum_{i=3}^m\left\{ \mu_i\|x_i^k-x_i^{k+1}\|^2-\frac{\beta}{2}\left[ \frac{m-2}{2b}+2m-i-5 \right]\|A_ix_i^k-A_i x_i^{k+1}\|^2\right\}\nonumber\\
&&-\sum_{i=3}^m \left\{\frac{\mu_i}{2}\|x_i-x_i^{k+1}\|^2-\beta\left[ \frac{m-2}{2\delta}+\frac{(i-3)}{2}\right]\|A_i x_i^{k+1}-A_i x_i\|^2\right\}\nonumber\\
&&+\left\{ \theta(u)-\theta(u^{k+1})+(w-w^{k+1})^\top F(w)+\beta(\sum_{i=1}^m A_i x_i -b)^\top \sum_{i=1}^m A_i (x_i^k-x_i^{k+1})\right\}.\nonumber\\
\end{eqnarray}
Invoking the Cauchy-Schwarz inequality and $a>0$, we have
\begin{eqnarray}\label{abm}
\frac{\beta}{2}(1+\frac{1}{2a})\| \sum_{i=3}^m(A_i x_i^{k+1}-A_i x_i^k)\|^2\le\frac{\beta}{2}(1+\frac{1}{2a})(m-2)\sum_{i=3}^m\|A_i x_i^{k+1}-A_i x_i^k\|^2.
\end{eqnarray}
Then, using (\ref{MSVI-F}), we get
\begin{eqnarray}
\label{solst}
 \theta(u^*)-\theta(u^{k+1})+(w^*-w^{k+1})^\top F(w^*)+\beta(\sum_{i=1}^m A_i x_i^* -b)^\top \sum_{i=1}^m A_i (x_i^k-x_i^{k+1})\le -\sum_{i=3}^m\frac{\mu_i}{2}\|x_i^*-x_i^{k+1}\|^2.\nonumber\\
 \end{eqnarray}
Setting $w:=w^*\in {\cal W}^*$ in (\ref{convtra}) and combining (\ref{abm}) and (\ref{solst}),  we obtain the assertion (\ref{convtrasol})-(\ref{zeta2}) directly.
\end{proof}

\subsection{Main result}
In this subsection, we prove the convergence of the e-ADMM \eqref{EADMM} with $m\ge3$ under  Assumptions \ref{AS1} and \ref{AS3}. This is the main result of this paper. As mentioned, it has been shown in \cite{CHYY} that the penalty parameter $\beta$ must be appropriately restricted to guarantee the convergence of the e-ADMM \eqref{EADMM} even all functions are assumed to be strongly convex. Therefore, in the following theorem we first present a range of $\beta$ to ensure the convergence of the e-ADMM \eqref{EADMM} with $m\ge 3$ under our assumptions. We target a larger range of $\beta$ while ensuring that all the coefficients $C_i$ ($i=2,\ldots,m$) and $\zeta_i$ ($i=3,\ldots,m$) defined in (\ref{rho})-(\ref{zeta2}) are positive. With the positiveness of these coefficients, as we shall show in the proof, it becomes possible to measure the difference of distance to a solution point for two consecutive iterates. It is noticed that determining the range of $\beta$ via the inequalities (\ref{rho})-(\ref{zeta2}) relies on the free variables $a,b,\delta$ and $m$; thus it seems to be unclear to know what the theoretically largest range is. In the following proof, we provide a heuristics and probe a favorable range of $\beta$ which can be shown to be a better choice than those in the existing literature.

\begin{lemma}\label{betacoeff}
Suppose Assumptions \ref{AS1} and \ref{AS3} hold. When $\beta$ is restricted by
\begin{eqnarray}
\label{betrangup}
\beta \in \left(0,\;
\min_{3\le i\le m} \frac{\mu_i}{\max\{4m-10, 3m-6.5\}\|A_i^\top A_i\|}\right),
 \end{eqnarray}
we have
  \begin{itemize}
  \item[i)]  $C_i>0$, $i=2,\ldots,m$,
  \item[ii)]  $\zeta_i>0$, $i=3,\ldots,m$,
  \end{itemize}
where $C_2$, $C_i$ ($i=3,\cdots,m$) and $\zeta_i$ ($i=3,\cdots,m$) are defined in (\ref{rho}), (\ref{Ci}) and (\ref{zeta2}), respectively.
\end{lemma}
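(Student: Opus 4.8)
The plan is to reduce both positivity statements to elementary bounds on the coefficients multiplying $\beta$, and then to exhibit one admissible triple $(a,b,\delta)$. First I would use that the interval (\ref{betrangup}) is \emph{open}: for every index $i\in\{3,\ldots,m\}$ this gives the strict inequality $\beta R<\mu_i/\|A_i^\top A_i\|$, where $R:=\max\{4m-10,\,3m-6.5\}$. Writing the definitions (\ref{Ci}) and (\ref{zeta2}) as $C_i=\mu_i/\|A_i^\top A_i\|-\beta K_i$ and $\zeta_i=\mu_i/\|A_i^\top A_i\|-\beta L_i$ with $K_i,L_i$ the bracketed coefficients, it then suffices to choose $a,b,\delta>0$ so that $K_i\le R$ and $L_i\le R$ for all such $i$, together with $(a+b)+\tfrac{\delta}{2}<\tfrac12$, the last being exactly $C_2>0$ by (\ref{rho}). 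Indeed, once $K_i\le R$ we get $\beta K_i\le\beta R<\mu_i/\|A_i^\top A_i\|$ and hence $C_i>0$, and symmetrically for $\zeta_i$.

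The second step collapses the $2(m-2)$ coefficient inequalities to two. Since the $i$-dependent term $\tfrac{3m-i-7}{2}$ of $K_i$ decreases in $i$ while the term $\tfrac{i-3}{2}$ of $L_i$ increases in $i$, the maxima are attained at $K_3$ and $L_m$; so I only need $K_3\le R$ and $L_m\le R$. A direct computation shows that the symmetric choice $a=b=\delta=\tfrac15$ produces exactly $K_3=4m-10$ and $L_m=3m-6.5$, both of which are $\le R$ by the definition of $R$.

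The main obstacle is that this critical choice is tight for $C_2$: at $a=b=\delta=\tfrac15$ one has $(a+b)+\tfrac{\delta}{2}=\tfrac12$, i.e.\ $C_2=0$ instead of $C_2>0$. The key to breaking the tie is that $m$ is an integer, so $4m-10=3m-6.5$ only at the non-integer value $m=3.5$; hence for every $m\ge3$ exactly one of $K_3,L_m$ lies \emph{strictly} below $R$ at the critical point (it is $K_3<R$ when $m=3$ and $L_m<R$ when $m\ge4$). I would then decrease slightly the parameter carrying that slack---$a$ and $b$ when $m=3$, and $\delta$ when $m\ge4$---which strictly lowers $(a+b)+\tfrac{\delta}{2}$ below $\tfrac12$ and thus forces $C_2>0$, while keeping the perturbed coefficient $\le R$ and leaving the binding one equal to $R$. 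Concretely one may take $a=b=\tfrac16,\ \delta=\tfrac15$ for $m=3$, and $a=b=\tfrac15$ with any $\delta\in[\tfrac{m-2}{7m-17},\tfrac15)$ for $m\ge4$, the interval being nonempty precisely because $m\ge4$. With the triple fixed, the reduction of the first paragraph yields $C_i>0$ for $i=2,\ldots,m$ and $\zeta_i>0$ for $i=3,\ldots,m$, completing the proof. The only genuinely delicate point is this balancing of the $C_2$ constraint against $K_3\le R$ and $L_m\le R$, which is exactly why the strict separation of $4m-10$ and $3m-6.5$ at integer $m$ is indispensable.
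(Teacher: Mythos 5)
Your proposal is correct and takes essentially the same route as the paper's proof: both reduce the $2(m-2)$ coefficient conditions to the worst indices $i=3$ in (\ref{Ci}) and $i=m$ in (\ref{zeta2}), and both work at the critical choice $a=b=\tfrac15$, $\delta\approx\tfrac15$, where the two worst coefficients become exactly $4m-10$ and $3m-6.5$. The only difference is in how the tie $C_2=0$ is broken: the paper perturbs via $\delta=(1-4a)\tfrac{m-2}{m-2+\epsilon'}$ and lets $\epsilon'\to 0+$ (implicitly choosing $\epsilon'$ small depending on $\beta$ to absorb the excess $2.5\epsilon'$ in $\tilde{\zeta}$'s coefficient), whereas you exhibit explicit $\beta$-independent triples with a clean $m=3$ versus $m\ge4$ case split ($a=b=\tfrac16,\ \delta=\tfrac15$, resp.\ $a=b=\tfrac15,\ \delta\in[\tfrac{m-2}{7m-17},\tfrac15)$), pushing all strictness onto the openness of the interval (\ref{betrangup}) --- a slightly more explicit finish of the same argument, and your numerical claims all check out.
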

\begin{proof}
Let us first explain our heuristics to find the range (\ref{betrangup}). With the purpose of finding a larger range of $\beta$ while enduring the positiveness of all the coefficients in (\ref{rho})-(\ref{zeta2}), and mainly motivated by (\ref{Ci}), we choose $a=b$ and thus we should ensure the following inequalities:
\begin{align}
& C_2:=(\frac{1}{2}-2a-\frac{\delta}{2})\beta >0,\qquad\qquad\qquad  \label{rho-2} \\
& C_i:=\frac{\mu_i}{\|A_i^\top A_i\|}-\beta\left(\frac{m-2}{2a}+\frac{3m-i-7}{2}\right)>0,\; i=3,\ldots,m,  \label{Ci-2}\\
&\zeta_i:=\frac{\mu_i}{\|A_i^\top A_i\|}-\beta\left[ \frac{m-2}{2\delta}+\frac{(i-3)}{2}\right]>0, \;i=3,\ldots,m.\label{zeta-2}
\end{align}
To ensure (\ref{rho-2}) and simplify (\ref{zeta-2}), we probe the choice of $\delta$ as
$$
\delta=(1-4a)\frac{m-2}{m-(2-\epsilon')} \;\;\hbox{with}\; \epsilon'>0
$$
so that the numerator $m-2$ in (\ref{zeta-2}) can be canceled. This particular choice also makes us to derive a range of $\beta$ whose upper bound can be represented by some linear terms of $m$. Indeed, with the mentioned probe, we have
\begin{align}
& C_2:=\frac{1-4a}{2}\frac{\epsilon'}{m-2+\epsilon'},\qquad\qquad\qquad  \label{rho-3} \\
& C_i:=\frac{\mu_i}{\|A_i^\top A_i\|}-\beta\left(\frac{m-2}{2a}+\frac{3m-i-7}{2}\right),\; i=3,\ldots,m,  \label{Ci-3}\\
&\zeta_i:=\frac{\mu_i}{\|A_i^\top A_i\|}-\beta\left[ \frac{m-2+\epsilon'}{2(1-4a)}+\frac{i-3}{2}\right], \;i=3,\ldots,m.\label{zeta-3}
\end{align}
Further probing different values of $a$, we choose $a=\frac{1}{5}$ in (\ref{rho-3}). Also, we choose $i=3$ in (\ref{Ci-3}) and $i=m$ in (\ref{zeta-3}).
Then, the definitions in (\ref{rho-3})-(\ref{zeta-3}) can be accordingly specified as
\begin{align}
& C_2:=\frac{\epsilon'}{10(m-2+\epsilon')},\qquad\qquad\qquad  \label{rho-4} \\
& C_i:=\frac{\mu_i}{\|A_i^\top A_i\|}-\beta(4m-10),\; i=3,\ldots,m,  \label{Ci-4}\\
&\zeta_i:=\frac{\mu_i}{\|A_i^\top A_i\|}-\beta(3m-6.5+2.5\epsilon'), \;i=3,\ldots,m.\label{zeta-4}
\end{align}
Letting $\epsilon' \to 0$ in (\ref{zeta-4}), we obtain the range (\ref{betrangup}) that can ensure the positiveness of all the coefficients defined in (\ref{rho})-(\ref{zeta2}).
\end{proof}

Now, we are in the stage to prove the convergence of the e-ADMM \eqref{EADMM} with the restriction (\ref{betrangup}) on $\beta$.
Let us define a potential function $\Phi(v^{k+1},v^k,v)$ as
\begin{eqnarray}\label{Phik}
\Phi(v^{k+1},v^k,v):=\frac{1}{2}\| v^{k+1}-{v}\|_Q^2+\beta\sum_{i=3}^m\left[\frac{(i-3)}{2}+\frac{5(m-2)}{4}\right]\|A_i x_i^{k+1}-A_i x_i^k\|^2,
\end{eqnarray}
with $\beta$ satisfying (\ref{betrangup}); and a block diagonal matrix as
\begin{eqnarray}\label{tildeQ}
\tilde Q=\hbox{diag}\left(2C_2 A_2^\top A_2, \cdots,2C_m A_m^\top A_m,\frac{1}{\beta}I\right).
\end{eqnarray}
Then, ${\tilde Q}$ is positive definite because $C_i>0$ ($i=2,\cdots,m$) according to Lemma \ref{betacoeff}. It thus follows from (\ref{convtrasol}) that
 \begin{eqnarray}\label{T1}
 \Phi(v^{k+1},v^k,v^*)\le\Phi(v^{k},v^{k-1},v^*)-\frac{1}{2}\|v^k-v^{k+1}\|^2_{\tilde Q}.
 \end{eqnarray}

\begin{theorem}\label{Thm-convergence}
Suppose Assumptions \ref{AS1} and \ref{AS3} hold. Let $\{w^k\}$ be the sequence generated by the e-ADMM \eqref{EADMM} with $\beta$ restricted in (\ref{betrangup}). Then, the sequence $\{w^k\}$ converges to a solution point in ${\cal W}^*$.
\end{theorem}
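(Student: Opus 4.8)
The plan is to run the standard Fej\'er-monotonicity argument on the inequality (\ref{T1}). Fixing an arbitrary $v^*\in{\cal V}^*$, I first note that $\tilde Q$ defined in (\ref{tildeQ}) is positive definite by Lemma \ref{betacoeff}, so the scalar sequence $\{\Phi(v^{k+1},v^k,v^*)\}$ is non-increasing and bounded below by zero, hence convergent; telescoping (\ref{T1}) over $k$ then gives $\sum_k\|v^k-v^{k+1}\|_{\tilde Q}^2<\infty$ and therefore $\|v^k-v^{k+1}\|_{\tilde Q}\to0$. Since $\tilde Q=\hbox{diag}(2C_2A_2^\top A_2,\ldots,2C_mA_m^\top A_m,\frac1\beta I)$ with every $C_i>0$ and each $A_i$ ($i\ge2$) of full column rank (Assumption \ref{AS1}), this forces $\|x_i^k-x_i^{k+1}\|\to0$ for $i=2,\ldots,m$ and $\|z^k-z^{k+1}\|\to0$.

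Next I would establish boundedness and extract a limit point. Boundedness of $\{\Phi(v^{k+1},v^k,v^*)\}$ together with positive definiteness of $Q$ in (\ref{Q}) bounds the term $\frac12\|v^{k+1}-v^*\|_Q^2$, so $\{v^k\}=\{(x_2^k,\ldots,x_m^k,z^k)\}$ is bounded. The block $x_1^k$, which is absent from $v$, is recovered from the multiplier update (\ref{lap}) as $A_1x_1^{k+1}=b-\sum_{i=2}^mA_ix_i^{k+1}+\frac1\beta(z^k-z^{k+1})$; the right-hand side is bounded and $A_1$ is full column rank, so $\{x_1^k\}$ is bounded as well. Thus $\{w^k\}$ is bounded and admits a subsequence $\{w^{k_j}\}$ converging to some $w^\infty=(x_1^\infty,\ldots,x_m^\infty,z^\infty)$; because the successive differences vanish, $w^{k_j+1}\to w^\infty$ too.

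I would then verify $w^\infty\in{\cal W}^*$ by passing to the limit in the first-order optimality relations (\ref{x3e2}). Along $k_j$ the perturbation $\beta\sum_{j=i+1}^mA_j(x_j^{k_j}-x_j^{k_j+1})\to0$ and $z^{k_j+1}\to z^\infty$, so using lower semicontinuity of the closed proper convex $\theta_i$'s one obtains, for each $i$ and all $x_i\in{\cal X}_i$,
\[
\theta_i(x_i)-\theta_i(x_i^\infty)-(x_i-x_i^\infty)^\top A_i^\top z^\infty\ge\frac{\mu_i}{2}\|x_i-x_i^\infty\|^2,
\]
while (\ref{lap}) together with $z^k-z^{k+1}\to0$ yields $\sum_{i=1}^mA_ix_i^\infty=b$. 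These are precisely the relations (\ref{MA-VIxyl}) characterizing ${\cal W}^*$, so $w^\infty\in{\cal W}^*$ and $v^\infty\in{\cal V}^*$.

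Finally I would upgrade subsequential convergence to convergence of the whole sequence by reusing the limit as the reference point. Taking $v^*:=v^\infty$ in (\ref{T1}), the facts $v^{k_j}\to v^\infty$, $v^{k_j+1}\to v^\infty$ and the vanishing of $\|A_ix_i^{k_j+1}-A_ix_i^{k_j}\|$ give $\Phi(v^{k_j+1},v^{k_j},v^\infty)\to0$, and the monotonicity furnished by (\ref{T1}) then forces $\Phi(v^{k+1},v^k,v^\infty)\to0$ for the entire sequence. Since $\frac12\|v^{k+1}-v^\infty\|_Q^2\le\Phi(v^{k+1},v^k,v^\infty)$ with $Q\succ0$, we conclude $v^k\to v^\infty$, and recovering $x_1^k\to x_1^\infty$ from the constraint as above yields $w^k\to w^\infty\in{\cal W}^*$. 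I expect the main obstacle to be the third step: the limits must be taken in the variational-inequality form rather than through gradients because the $\theta_i$ may be nonsmooth, and the $x_1$-block that never enters $v$ must be reconstructed and shown to converge so that the limit satisfies the full optimality system, not merely its $v$-part.
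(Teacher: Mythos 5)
Your proof is correct and follows essentially the same route as the paper: the Fej\'er-type inequality (\ref{T1}) yields boundedness and summability of successive differences, a cluster point is shown to lie in ${\cal W}^*$ by passing to the limit in the optimality conditions, and $x_1$ is reconstructed from (\ref{lap}) using the full column rank of $A_1$. If anything, your ordering is slightly more careful than the paper's, which asserts uniqueness of the cluster point of $\{v^k\}$ ``because of (\ref{T1})'' before verifying $w^\infty\in{\cal W}^*$, whereas you first establish membership in ${\cal W}^*$ and only then take $v^*:=v^\infty$ as the reference point in (\ref{T1})---exactly the step that legitimizes the Fej\'er argument.
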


\begin{proof}
It follows from (\ref{T1}) that the sequence defined by
$$
\big\{\frac{1}{2}\| v^{k+1}-{v^*}\|_Q^2+\beta\sum_{i=3}^m\left[\frac{(i-3)}{2}+\frac{5(m-2)}{4}\right]\|A_i x_i^{k+1}-A_i x_i^k\|^2\big\}
$$
is non-increasing.
It implies that the sequence $\{ v^k\}$ is bounded under the assumption that $A_i$ $(i=2,\ldots,m)$ are full column rank. The relationship \eqref{lap} and the fact that $A_1$ is full column rank  further imply that the sequence $\{x_1^k\}$ is also bounded. Hence, the iterative sequence $\{w^k\}$ generated by the scheme \eqref{x1a}-\eqref{lap} is bounded.

Summarizing (\ref{T1}) for all $k$ and rearranging the terms, we get
\begin{eqnarray}\label{Global}
&&\frac{1}{2}\sum_{k=1}^{\infty}\|v^k-v^{k+1}\|^2_{\tilde Q}\le \left(\frac{1}{2}\| v^{1}-{ v^*}\|_Q^2 +\beta\sum_{i=3}^m\left[\frac{(i-3)}{2}+\frac{5(m-2)}{4}\right]\|A_i x_i^{0}-A_i x_i^{1}\|^2\right),
\end{eqnarray}
which implies
\[\label{lim}
\lim_{k\rightarrow\infty}\|z^k-z^{k+1}\|=0\;\mbox{and}\;\lim_{k\rightarrow\infty}\|A_i x_i^k-A_i x_i^{k+1}\|=0, \;\;i=2,\ldots,m.
\]
Moreover, the boundedness of the sequence indicates that the sequence $\{w^k\}$ has at least one cluster point. Let $w^{\infty}$ be an arbitrary cluster point of $\{w^k\} $ and $\{w^{k_j}\}$ be the subsequence converging to $w^{\infty}$. Then, the sequence $\{v^{k_j}\}$ converges to $v^{\infty}$; and the whole sequence $\{v^k\}$ has only one cluster point $v^{\infty}$ because of \eqref{T1}.
On the other hand, it follows from (\ref{lap}) and the fact that $A_1$ is full column rank that
\begin{eqnarray*}
\label{x1new}
x_1^{k+1}=(A_1^\top A_1)^{-1}A_1^\top\left(b-\sum_{i=2}^mA_i x_i^{k+1}+\frac{z^k-z^{k+1}}{\beta}\right).
\end{eqnarray*}
Then, the sequence $\{x_1^k\}$ has only one cluster point, say $x_1^\infty:=(A_1^\top A_1)^{-1}A_1^\top\left(b-\sum_{i=2}^mA_i x_i^{\infty}\right)$, by combining the above equation with $v^k\rightarrow v^\infty$.
 Thus, the sequence $\{w^k\}$ converges to $w^\infty$. Taking limit along the subsequence $\{w^{k_j}\}$ in \eqref{optint} and using  \eqref{lim}, we have

\begin{eqnarray*} \label{MA-VIsolm}
    \left\{
    \begin{array}{l}
     \theta_1(x_1) - \theta_1(x_1^\infty) + (x_1- x_1^\infty)^\top (- A_1^\top z^\infty) \ge0, \\
      \theta_2(x_2) - \theta_2(x_2^\infty) + (x_2- x_2^\infty)^\top (- A_2^\top z^\infty) \ge 0, \\
          \quad  \cdots \cdots \quad\qquad \cdots \cdots\\
       \theta_i(x_i) - \theta_i(x_i^\infty) + (x_i- x_i^\infty)^\top (- A_i^\top z^\infty) \ge \frac{\mu_i}{2}\|x_i-x_i^\infty\|^2,\;\;i=3,\ldots,m,\nonumber \\
            \quad  \cdots \cdots \quad\qquad \cdots \cdots\\
     \sum_{i=1}^m A_ix_i^\infty - b =0.
      \end{array} \right.\;\forall\; w\in{\cal W}.
    \end{eqnarray*}
According to the optimality condition \eqref{MSVII}, we know $w^\infty \in {\cal W^*}$. Consequently, the sequence $\{w^k\}$ generated by the e-ADMM \eqref{EADMM} with $\beta$ restricted in (\ref{betrangup}) converges to a solution point in ${\cal W^*}$.
\end{proof}

\begin{remark}\label{dropfull}
It can be seen from (\ref{T1}) that the sequence $\{A_i x_i^k\}$ $(i=1,\ldots,m)$ converges to $\{A_i x_i^{\infty}\}$ $(i=1,\ldots,m)$ even without the full column rank assumptions on $A_i$'s $(i=1,\ldots,m)$.
%
\end{remark}
\begin{remark} We have shown the convergence of the e-ADMM \eqref{EADMM} when $\beta$ is restricted in the range (\ref{betrangup}), for the generic case with a general $m\ge3$. Indeed, when the special case of $m=3$ is considered, the range (\ref{betrangup}) reduces to
\begin{eqnarray}\nonumber
\beta\in \left(0, \frac{2}{5 \|A_3^\top A_3\|}\right),
\end{eqnarray}
which is still larger than some ones in the literature that are only eligible to the special case of $m=3$, e.g., the range $\left(0, \frac{6}{17\|A_3^\top A_3\|}\right)$ proposed in \cite{CHY}.
\end{remark}

\section{Ergodic convergence rate}
\label{EgoCov}
In \cite{HeYuan-SINUM,HeYuan-NM}, some worst-case $O(1/t)$ convergence rates measured by the iteration complexity were established for the original ADMM scheme which corresponds to the scheme \eqref{EADMM} with $m=2$. Since then, there are some works focusing on investigating the convergence rates in the same nature for various splitting methods in the literature. This kind of convergence rates provides a global, though perhaps not sharp, estimate on the convergence speed for the algorithm under discussion. In this section, we establish a worst-case $O(1/t)$ convergence rate measured by iteration complexity for the e-ADMM (\ref{EADMM}) with $m \ge 3$ under Assumptions \ref{AS1} and \ref{AS3}. Compared with (\ref{betrangup}), the restriction on $\beta$ to ensure the $O(1/t)$ convergence rate is slightly more restrictive. In order to establish the ergodic convergence rate, we require the positiveness of $C_2$ defined in (\ref{rho}), $C_i$ $(i=3,\ldots,m)$ in (\ref{Ci}) and $\tilde\zeta_i$ ($i=3,\ldots,m$) in (\ref{zeta3}). Note that $\tilde\zeta_i$ ($i=3,\ldots,m$) is deferent from $\zeta_i$ ($i=3,\ldots,m$); and the difference results in a more restrictive range of $\beta$ as to be shown later.

We first prove a lemma that will be used to prove a worst-case $O(1/t)$ convergence rate for the e-ADMM (\ref{EADMM}) with $m \ge 3$.
\begin{lemma}\label{lem41}
Suppose Assumptions \ref{AS1} and \ref{AS3} hold. Let $\{w^k\}$ be the sequence generated by the e-ADMM \eqref{EADMM} with $m\ge3$. If $\beta$ is restricted by
\begin{eqnarray}
\label{betrang2}
\beta \in \left(0,
\min_{3\le i\le m} \frac{\mu_i}{(\frac{13+\sqrt{33}}{4}m-\frac{17+\sqrt{33}}{2})\|A_i^\top A_i\|}\right),
 \end{eqnarray}
then we have
\begin{eqnarray}\label{Erogcont}
 \Theta(v^{k+1}, v^k,v)\le \Theta(v^{k}, v^{k-1},v)+\Xi(w^{k+1},w^k,w),
\end{eqnarray}
where
\begin{align}
&\Theta(v^{k+1}, v^k,v):=\frac{1}{2}\|v^{k+1}-v\|_Q^2+\beta\sum_{i=3}^m\tau_i\|A_i x_i^{k+1}-A_i x_i^{k}\|^2, \;\label{The}\\
&\tau_i:=\frac{(i-3)}{2}+\frac{(7+\sqrt{33})(m-2)}{8}.\label{taui}
\end{align}
 and
 \begin{eqnarray}\Xi(w^{k+1},w^k,w):=\theta(u)-\theta(u^{k+1})+(w-w^{k+1})^\top F(w)+\beta(\sum_{i=1}^m A_i x_i -b)^\top \sum_{i=2}^m A_i (x_i^k-x_i^{k+1}). \label{LXi}
\end{eqnarray}
\end{lemma}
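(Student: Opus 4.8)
The plan is to re-run the entire chain of estimates that produced inequality (\ref{convtra}) in the proof of Lemma \ref{lem36}, but to stop short of specializing $w$ to a saddle point. Recall that (\ref{convtra}) was derived for an arbitrary $w\in{\cal W}$ and that its last curly bracket is precisely the quantity $\Xi(w^{k+1},w^k,w)$ of (\ref{LXi}); only afterwards did the proof of Lemma \ref{lem36} set $w:=w^*$ and invoke (\ref{solst}) to dispose of that bracket. Here I keep $w$ free and retain the bracket as the additive term $\Xi$ on the right. Feeding the Cauchy--Schwarz bound (\ref{abm}) for $\frac{\beta}{2}(1+\frac{1}{2a})\|\sum_{i=3}^m(A_ix_i^{k+1}-A_ix_i^k)\|^2$ into (\ref{convtra}) and then collecting the three groups of quadratic terms (those in $\|A_2x_2^{k+1}-A_2x_2^k\|^2$, in $\|A_ix_i^{k+1}-A_ix_i^k\|^2$, and in $\|A_ix_i^{k+1}-A_ix_i\|^2$) should leave an inequality of the form $\Theta(v^{k+1},v^k,v)\le\Theta(v^k,v^{k-1},v)-C_2\|\cdots\|^2-\tfrac{1}{2\beta}\|z^k-z^{k+1}\|^2-\sum_i C_i\|\cdots\|^2-\sum_i\tilde\zeta_i\|\cdots\|^2+\Xi$, the coefficient of the difference terms inside $\Theta$ being forced to $\tau_i$ by matching.

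The one genuinely new ingredient, compared with Lemma \ref{lem36}, concerns the terms $\|A_ix_i^{k+1}-A_ix_i\|^2$. When $w=w^*$, inequality (\ref{solst}) contributes an extra $-\frac{\mu_i}{2}\|x_i^*-x_i^{k+1}\|^2$, which combines with the explicit $-\frac{\mu_i}{2}\|x_i-x_i^{k+1}\|^2$ coming from (\ref{convtra}) to yield the full coefficient $\frac{\mu_i}{\|A_i^\top A_i\|}$ appearing in $\zeta_i$ of (\ref{zeta2}). Since I no longer set $w=w^*$, only the single term $-\frac{\mu_i}{2}\|x_i-x_i^{k+1}\|^2$ is available; bounding $\frac{\mu_i}{2}\|x_i-x_i^{k+1}\|^2\ge\frac{\mu_i}{2\|A_i^\top A_i\|}\|A_ix_i^{k+1}-A_ix_i\|^2$ then produces the modified coefficient $\tilde\zeta_i=\frac{\mu_i}{2\|A_i^\top A_i\|}-\beta(\frac{m-2}{2\delta}+\frac{i-3}{2})$, which carries exactly half of the strong-convexity budget of $\zeta_i$. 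This halving is the sole source of the more restrictive range (\ref{betrang2}).

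To finish I must choose the free scalars $a,b,\delta$ so that $C_2$, the $C_i$ of (\ref{Ci}), and the new $\tilde\zeta_i$ are simultaneously nonnegative on as large an interval of $\beta$ as possible, in the spirit of Lemma \ref{betacoeff}. Matching the $\Theta$-coefficient to $\tau_i$ of (\ref{taui}) already forces $\frac{1}{4b}=\frac{7+\sqrt{33}}{8}$, i.e. $a=b=\frac{7-\sqrt{33}}{8}$; taking the complementary choice $\delta\uparrow\frac{\sqrt{33}-5}{2}$ (so that $C_2\downarrow 0^+$, controlled by an $\epsilon'$-perturbation exactly as in Lemma \ref{betacoeff}), one computes that the $C_i$-coefficient is largest at $i=3$ and equals $\frac{13+\sqrt{33}}{4}m-\frac{17+\sqrt{33}}{2}$, while every $\tilde\zeta_i$-coefficient is dominated by it (the smallest gap, at $i=m$, being $m-3\ge0$). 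Hence this single coefficient governs all of the required inequalities and reproduces precisely the interval (\ref{betrang2}); dropping the resulting nonpositive quadratic terms then delivers (\ref{Erogcont}).

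I expect the main obstacle to be this last bookkeeping step: verifying that, after the factor-of-two loss in $\tilde\zeta_i$, there is still a common $\beta$-interval on which $C_2,C_i,\tilde\zeta_i\ge0$, and that the worst case across $i\in\{3,\ldots,m\}$ collapses to the closed-form bound of (\ref{betrang2}). The algebra for the $\|A_ix_i^{k+1}-A_ix_i^k\|^2$ group—pooling the contribution of (\ref{abm}), the bracket $\frac{\beta}{2}[\frac{m-2}{2b}+2m-i-5]$, and the strong-convexity estimate $\mu_i\|x_i^k-x_i^{k+1}\|^2\ge\frac{\mu_i}{\|A_i^\top A_i\|}\|A_ix_i^k-A_ix_i^{k+1}\|^2$ into the single quantity $-C_i$—is routine but must be tracked index by index to be sure no index is left uncovered by the common coefficient.
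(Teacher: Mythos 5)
Your proposal is correct and follows essentially the same route as the paper: keep $w$ arbitrary in (\ref{convtra}), feed in (\ref{abm}) to arrive at the paper's intermediate inequality with the halved coefficient $\tilde\zeta_i=\frac{\mu_i}{2\|A_i^\top A_i\|}-\beta\bigl[\frac{m-2}{2\delta}+\frac{i-3}{2}\bigr]$ (the paper's (\ref{zeta3}), arising exactly as you say because (\ref{solst}) is no longer invoked), then choose $a=b=\frac{7-\sqrt{33}}{8}$ and $\delta$ approaching $\frac{\sqrt{33}-5}{2}$ via an $\epsilon'$-perturbation and drop the nonpositive quadratic terms. Your bookkeeping checks out against the paper's computation — the binding multiplier is indeed $C_3$'s value $\frac{13+\sqrt{33}}{4}m-\frac{17+\sqrt{33}}{2}$, which dominates the doubled $\tilde\zeta_i$ multipliers with smallest gap $m-3$ at $i=m$ — and in fact supplies details the paper explicitly skips.
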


\begin{proof} First, substituting (\ref{abm}) into  (\ref{convtra}), we get
  \begin{eqnarray}
\label{ergokey}
&&\frac{\beta}{2}\sum_{i=2}^m\|A_i x_i^{k+1}-A_i x_i\|^2+\frac{1}{2\beta}\|z^{k+1}-z\|^2+\beta\sum_{i=3}^m\left\{\frac{(i-3)}{2}+\frac{(m-2)}{4b}\right\}\|A_i x_i^{k+1}-A_i x_i^k\|^2\nonumber\\
&&\le \frac{\beta}{2}\sum_{i=2}^m\|A_i x_i^{k}-A_i x_i\|^2+\frac{1}{2\beta}\|z^{k}-z\|^2+\beta\sum_{i=3}^m\left\{\frac{(i-3)}{2}+\frac{(m-2)}{4b}\right\}\|A_i x_i^{k}-A_i x_i^{k-1}\|^2\nonumber\\
&&-\sum_{i=2}^m C_i\|A_i x_i^{k+1}-A_i x_i^k\|^2-\frac{1}{2\beta}\|z^k-z^{k+1}\|^2-\sum_{i=3}^m \tilde\zeta_i\|A_i x_i^{k+1}-A_i x_i\|^2+\Xi(w^{k+1},w^k,w),\nonumber\\
\end{eqnarray}
where $\Xi(w^{k+1},w^k,w)$ is defined in (\ref{LXi}) and
\begin{align}
& \tilde\zeta_i:=\frac{\mu_i}{2\|A_i^\top A_i\|}-\beta\left[ \frac{m-2}{2\delta}+\frac{(i-3)}{2}\right], \;i=3,\ldots,m. \label{zeta3} \\
\nonumber
\end{align}
The heuristics of the following part is similar as that of Lemma \ref{betacoeff}. We skip the detail for succinctness.
Setting $a=b=\frac{7-\sqrt{33}}{8}$ and $\delta=\frac{\sqrt{33}-5}{2}\frac{(m-2)}{(m-2+\epsilon')}$ $(\epsilon'>0)$,
we get
$$C_2=\frac{\sqrt{33}-5}{4}\frac{\epsilon'}{(m-2+\epsilon')},\qquad\qquad\qquad\qquad\qquad\qquad$$
$$ C_i=\frac{\mu_i}{\|A_i^\top A_i\|}-(\frac{13+\sqrt{33}}{4}m-\frac{14+i+\sqrt{33}}{2})\beta,\;i=3,\ldots,m,\qquad\qquad$$
and
$$\tilde \zeta_i=\frac{\mu_i}{2\|A_i^\top A_i\|}-(
\frac{5+\sqrt{33}}{8}m-\frac{5+\sqrt{33}}{4}+\frac{i-3}{2}+\frac{5+\sqrt{33}}{8}\epsilon')\beta,\;i=3,\ldots,m. $$
Let $\epsilon'\rightarrow 0+$. Then, we derive that $C_i>0$, $(i=2,\ldots,m)$ and $\tilde\zeta_i>0$ ($i=3,\ldots,m$)
when $\beta$ satisfies (\ref{betrang2}). Thus, the assertion (\ref{Erogcont}) follows from (\ref{ergokey}) immediately.
\end{proof}

Based on Lemma \ref{lem41}, we now establish a worst-case $O(1/t)$ convergence rate in the ergodic sense for the e-ADMM \eqref{x1a}-\eqref{lap}. For this analysis, the quality of an iterate is measured by the feasibility violation and the decrease of the objective function. Let us define
\[\label{XK}
x_{i,t}^{k+1}:=\frac{1}{t}\sum_{k=1}^t  x_i^{k+1},\;i=1,\ldots,m; \;\;\; \;u^{k+1}_t:=\frac{1}{t}\sum_{k=1}^t u^{k+1},\; \;\hbox{and} \;\; w^{k+1}_t:=\frac{1}{t}\sum_{k=1}^t w^{k+1}.\]
Obviously, $w^{k+1}_t \in{\cal W}$ because of the convexity of ${\cal X}_i$ $(i=1,\ldots,m)$. Note that we are considering the case of $m\ge3$. Hence, the interval (\ref{betrang2}) is included in the restriction of $\beta$ (\ref{betrangup}). Then, invoking Theorem \ref{Thm-convergence}, the sequence $\{\frac{1}{2}\|v^k-v^*\|_Q^2\}$ is bounded and thus there exists a constant $\kappa$ such that
\begin{eqnarray}\label{kappab}
\|A_ix_i^k\|\le \kappa, \;\forall\; i=1, \ldots,m,\;\hbox{and}\; \|z^k\|\le \kappa, \;\forall \;k\ge 0.
\end{eqnarray}

\begin{theorem}\label{TA2}
Suppose Assumptions \ref{AS1} and \ref{AS3} hold. For $t$ iterations generated by the e-ADMM \eqref{EADMM} with $\beta$ restricted in (\ref{betrang2}), the following assertions holds.
\begin{itemize}
\item[1)] For $\bar C:=\beta\sum_{i=3}^m\tau_i\|A_i x_i^1-A_i x_i^0\|^2$ and $\tau_i$ is defined in (\ref{taui}), we have
\begin{eqnarray}\label{BK}
 \theta(u_t^{k+1})-\theta(u)+( w_t^{k+1}-w)^\top F(w)\le \frac{1}{t}\left[2\beta \kappa(m-1)\|\sum_{i=1}^m A_i x_i-b\|
  +\frac{1}{2}\|v^1-v\|_Q^2+{\bar C}\right].\quad
\end{eqnarray}

\item[2)] There exists a constant ${\bar c}_1>0$ such that
\begin{eqnarray}\label{primalergo}
\|\sum_{i=1}^m A_i x_{i,t}^{k+1}-b\|^2\le \frac{{\bar c}_1}{t^2}.
\end{eqnarray}
\item[3)] There exists a constant ${\bar c}_2>0$ such that
\begin{eqnarray}
\label{objergo}
|\theta(u_t^{k+1})-\theta(u^*)|\le\frac{{\bar c}_2}{t}.
\end{eqnarray}
\end{itemize}
\end{theorem}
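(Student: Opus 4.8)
The plan is to derive all three assertions from the one-step inequality (\ref{Erogcont}) of Lemma \ref{lem41} by a telescoping-plus-averaging argument in the spirit of the ergodic analysis of \cite{HeYuan-SINUM,HeYuan-NM}. First I would prove assertion 1). Rewriting (\ref{Erogcont}) as $\Xi(w^{k+1},w^k,w)\ge\Theta(v^{k+1},v^k,v)-\Theta(v^k,v^{k-1},v)$ and summing over $k=1,\ldots,t$, the right-hand side telescopes; since $Q$ is positive definite and each $\tau_i\ge 0$ in (\ref{taui}), we have $\Theta(v^{t+1},v^t,v)\ge 0$, and hence $\sum_{k=1}^t\Xi(w^{k+1},w^k,w)\ge-\Theta(v^1,v^0,v)=-\tfrac12\|v^1-v\|_Q^2-\bar C$. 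Expanding $\Xi$ through (\ref{LXi}), three things happen on the left: the terms $\theta(u)-\theta(u^{k+1})$ add up to $t\theta(u)-\sum_{k=1}^t\theta(u^{k+1})$; the terms $(w-w^{k+1})^\top F(w)$ add up to $t(w-w_t^{k+1})^\top F(w)$ by the definition (\ref{XK}) of the averages; and, crucially, the crossing term $\beta(\sum_iA_ix_i-b)^\top\sum_{i=2}^mA_i(x_i^k-x_i^{k+1})$ telescopes in $k$ to $\beta(\sum_iA_ix_i-b)^\top\sum_{i=2}^mA_i(x_i^1-x_i^{t+1})$. Invoking Jensen's inequality $t\theta(u_t^{k+1})\le\sum_{k=1}^t\theta(u^{k+1})$ (convexity of $\theta$), rearranging, and bounding the telescoped crossing term by $2\beta\kappa(m-1)\|\sum_iA_ix_i-b\|$ via the uniform bound (\ref{kappab}), after dividing by $t$ I obtain exactly (\ref{BK}).

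Next, for assertion 2) I would specialize (\ref{BK}) to $u=u^*$, so that the feasibility term vanishes and only the free multiplier $z$ survives inside $v$. Writing $r_t:=\sum_{i=1}^mA_ix_{i,t}^{k+1}-b$ and collecting the $z$-independent constants into a single $D$, this gives, for every $z\in\Re^l$, the estimate $\theta(u_t^{k+1})-\theta(u^*)-z^\top r_t\le\tfrac1t\big(D+\tfrac1{2\beta}\|z^1-z\|^2\big)$. Combining it with the saddle-point lower bound $\theta(u_t^{k+1})-\theta(u^*)\ge(z^*)^\top r_t$, which follows from $\mathcal{L}(u_t^{k+1},z^*)\ge\mathcal{L}(u^*,z^*)=\theta(u^*)$, I obtain $(z^*-z)^\top r_t\le\tfrac1t\big(D+\tfrac1{2\beta}\|z^1-z\|^2\big)$ for all $z$. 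Making the optimal quadratic choice $z=z^1-\beta t\,r_t$ converts this into $\tfrac{\beta t}{2}\|r_t\|^2\le(z^1-z^*)^\top r_t+\tfrac{D}{t}\le\|z^1-z^*\|\,\|r_t\|+\tfrac{D}{t}$, and solving this quadratic inequality in $\|r_t\|$ yields $\|r_t\|\le C'/t$ for a constant $C'$, hence $\|r_t\|^2\le\bar c_1/t^2$, which is (\ref{primalergo}).

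Finally, assertion 3) follows as a two-sided estimate. The lower bound $\theta(u_t^{k+1})-\theta(u^*)\ge(z^*)^\top r_t\ge-\|z^*\|\,\|r_t\|$ combines with (\ref{primalergo}) to give $\theta(u_t^{k+1})-\theta(u^*)\ge-\bar c/t$. For the upper bound I would set $w=w^*$ in (\ref{BK}): then $(w_t^{k+1}-w^*)^\top F(w^*)=-(z^*)^\top r_t$, the feasibility term again vanishes, and one gets $\theta(u_t^{k+1})-\theta(u^*)\le(z^*)^\top r_t+\tfrac1{2t}\|v^1-v^*\|_Q^2+\tfrac{\bar C}{t}\le\bar c/t$ after one more use of (\ref{primalergo}). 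Together these give (\ref{objergo}). The main obstacle, and the only genuinely non-mechanical step, is the feasibility estimate in 2): one must exploit that (\ref{BK}) holds for all $z$, inject the saddle-point inequality to convert the objective gap into a multiplier pairing, and then make the optimal quadratic choice of $z$ so as to bound $\|r_t\|$ itself rather than merely an inner product against it. Everything else is bookkeeping, namely telescoping, Jensen's inequality, and the uniform bound (\ref{kappab}).
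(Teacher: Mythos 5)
Your proofs of assertions 1) and 3) follow the paper's own route essentially verbatim: for 1), summing (\ref{Erogcont}) over $k=1,\ldots,t$, telescoping the crossing term, invoking Jensen's inequality and the uniform bound (\ref{kappab}); for 3), the saddle-point lower bound $\mathcal{L}(u_t^{k+1},z^*)\ge \mathcal{L}(u^*,z^*)$ plus the choice $w:=w^*$ in the ergodic bound, where the paper uses Young's inequality $\langle z^*, r_t\rangle \ge -\frac{1}{2}\bigl(\frac{1}{t}\|z^*\|^2 + t\|r_t\|^2\bigr)$ and you use Cauchy--Schwarz together with (\ref{primalergo}) --- a cosmetic difference. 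Where you genuinely diverge is assertion 2), with $r_t:=\sum_{i=1}^m A_i x_{i,t}^{k+1}-b$ the averaged residual. The paper's proof is a one-line computation: by the multiplier update (\ref{lap}), $\sum_{i=1}^m A_i x_i^{k+1}-b=\frac{1}{\beta}(z^k-z^{k+1})$, so the averaged residual telescopes exactly to $\frac{1}{t\beta}(z^1-z^{t+1})$, and boundedness of $\{z^k\}$ (available from Theorem \ref{Thm-convergence}, since the interval (\ref{betrang2}) is contained in (\ref{betrangup})) gives $\|r_t\|\le \sqrt{\bar c_1}/t$ with no variational machinery at all. You instead run the dual-maximization argument: specialize (\ref{BK}) to $u=u^*$ with $z$ left free, inject the saddle-point inequality to convert the objective gap into the pairing $(z^*-z)^\top r_t$, and choose $z=z^1-\beta t\, r_t$ to obtain a quadratic inequality in $\|r_t\|$. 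That argument is correct --- the multiplier is unconstrained, so the choice of $z$ is admissible; the feasibility term in (\ref{BK}) vanishes at $u=u^*$; and your constant $D$ legitimately absorbs the fixed quantity $\frac{\beta}{2}\sum_{i=2}^m\|A_i x_i^1-A_i x_i^*\|^2$ coming from $\|v^1-v\|_Q^2$ --- and it is the more robust of the two, since it would survive in settings where the residual does not telescope cleanly through the dual update (for instance with a relaxed dual step size); what it costs relative to the paper's direct identity is brevity and slightly worse constants.
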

\begin{proof}
1) First,
it follows from  the assertion \eqref{Erogcont} that
for all $w\in{\cal W}$, we have
\begin{eqnarray}\label{Lem2p}
&&\theta(u)-\theta(u^{k+1})+(w-w^{k+1})^\top F(w)+\beta(\sum_{i=1}^m A_i x_i-b)^\top \sum_{i=2}^m A_i(x_i^k-x_i^{k+1})\nonumber\\
&&\ge \Theta(v^{k+1}, v^k,v )
-\Theta(v^{k}, v^{k-1},v ).
\end{eqnarray}
Summarizing both sides of the above inequality from $k=1, 2, \cdots, t$, we have
\begin{eqnarray}\label{sum}
&&t\theta(u)-\sum_{k=1}^t\theta(u^{k+1})+(tw-\sum_{k=1}^t w^{k+1})^\top F(w)+\beta (\sum_{i=1}^mA_i x_i-b)^\top \sum_{i=2}^m A_i (x_i^1-x_i^{t+1})\nonumber\\
&&\ge \Theta(v^{t+1}, v^t, v )-\Theta(v^{1}, v^0,v ).
\end{eqnarray}
Then, it follows from the convexity of $\theta_i$ $(i=1,\ldots,m)$ that
\[\label{FK}
\theta(u_t^{k+1})\le\frac{1}{t}\sum_{k=1}^t\theta(u^{k+1}).
\]
Combining (\ref{kappab}), (\ref{sum}) and (\ref{FK}), we have
\begin{eqnarray}\label{add1}
\theta(u_t^{k+1})-\theta(u)+( w_t^{k+1}-w)^\top F(w)\le\frac{1}{t} \left(\Theta(v^{1}, v^0,v )+2\beta\kappa(m-1)\|\sum_{i=1}^m A_i x_i-b\|\right).
\end{eqnarray}
Thus, the assertion (\ref{BK}) follows from the above inequality and the defintion of $\Theta(v^{1}, v^0,v)$ directly.

2) Let us define ${\bar c}_1:=\frac{2}{\beta^2}\left(\|z^1-z^*\|^2+\|z^{k+1}-z^*\|^2\right)$. Then, we have
\begin{eqnarray}\nonumber
\|\sum_{i=1}^m A_i x_{i,t}^{k+1}-b\|^2 &=&\left\|\frac{1}{t}\sum_{k=1}^t\left[ \sum_{i=1}^m A_i x_{i}^{k+1}-b\right]\right\|^2 =\left\|\frac{1}{t}\sum_{k=1}^t\left[  \frac{1}{\beta}(z^k-z^{k+1})\right]\right\|^2=\left\|\frac{1}{t} \frac{1}{\beta}\left( z^1-z^{t+1}\right)\right\|^2\nonumber\\
&\le&\frac{1}{t^2}\frac{2}{\beta^2}\left(\|z^1-z^*\|^2+\|z^{k+1}-z^*\|^2\right)=\frac{{\bar c_1}}{t^2},\nonumber
\end{eqnarray}
where the first equality follows from (\ref{XK}), the second follows from (\ref{lap}) and the last follows from Cauchy-Schwarz inequality.
The assertion (\ref{primalergo}) is proved immediately.

3) It follows from $L(u_t^{k+1},z^*)\ge L(u^*,z^*)$ that
\begin{eqnarray}
\label{objlowerg}
\theta(u_t^{k+1})-\theta(u^*)\ge\langle z^*,\sum_{i=1}^m A_i x_{i,t}^{k+1}-b\rangle\ge-\frac{1}{2}\left(\frac{1}{t}\|z^*\|^2+t\|\sum_{i=1}^m A_i x_{i,t}^{k+1}-b\|^2 \right)\ge-\frac{1}{2t}(\|z^*\|^2+{\bar c}_1),\nonumber\\
\end{eqnarray}
where the second inequality is because of the Cauchy-Schwarz inequality and the last is due to (\ref{primalergo}).
On the other hand, setting $w:=w^*$ in (\ref{add1}), we obtain
\begin{eqnarray}\nonumber
&& \theta(u_t^{k+1})-\theta(u^*)+( w_t^{k+1}-w^*)^\top F(w^*)\le\frac{1}{t}\Theta(v^{1}, v^0,v^*).
\end{eqnarray}
Invoking the definition of $F$ in (\ref{MD-F}), we have
\begin{eqnarray}\nonumber
( w_t^{k+1}-w^*)^\top F(w^*)=-\langle z^*,\sum_{i=1}^m A_i x_{i,t}^{k+1}-b \rangle\ge-\frac{1}{2t}(\|z^*\|^2+{\bar c}_1),
\end{eqnarray}
where the proof of the last inequality is similar to (\ref{objlowerg}).
Combining these two inequalities above, we get
\begin{eqnarray}\label{objuperg}
\theta(u_t^{k+1})-\theta(u^*)\le\frac{1}{t}\Theta(v^{1}, v^0,v^*)+\frac{1}{2t}(\|z^*\|^2+{\bar c}_1).
\end{eqnarray}
The inequalities (\ref{objlowerg}) and (\ref{objuperg}) indicate that the assertion (\ref{objergo}) holds by setting $\bar c_2:=\Theta(v^{1}, v^0,v^*)+\frac{1}{2}(\|z^*\|^2+{\bar c}_1)$.

\end{proof}

For a compact set $D\subset{\cal W}$ containing a solution point of the variational inequality (\ref{MSVII}), let us define
\begin{equation}\label{Viapp}
\tilde d:=\sup\{2\beta \kappa(m-1)\|\sum_{i=1}^m A_i x_i-b\|
  +\frac{1}{2}\|v^1-v\|_Q^2|w\in D  \}.
  \end{equation}
Then, for the first $t$ iterations of the e-ADMM (\ref{EADMM}), the point $w^{k+1}_t$ defined in (\ref{XK}) satisfies
\begin{eqnarray}\label{VIapp}
\sup_{w\in {\cal D}}\{\theta(u_t^{k+1})-\theta(u)+( w_t^{k+1}-w)^\top F(w) \}\le\frac{\tilde d +\bar C}{t}.
\end{eqnarray}
On the other hand, invoking Theorem \ref{Thm-convergence} and  Stolz-Ces\`{a}ro Theorem (see, e.g. \cite{ST}), the sequence $\{w^{k+1}_t\}$ converges to the same saddle point $w^{\infty}$ as the sequence $\{w^k\}$ does. Therefore, it implies that $w_t^{k+1}$ is an approximated solution of (\ref{mini3}) with an accuracy of $O(1/t)$ in sense of (\ref{VIapp}). Note that Theorem \ref{TA2} also indicates a worst-case $O(1/t)$ convergence rate of the e-ADMM (\ref{EADMM}) in sense of that the accuracy is measured by both the feasibility violation (\ref{primalergo}) and the decrease of the objective function (\ref{objergo}).

\begin{remark}
For the special case of $m=3$, the restriction of $\beta$ (\ref{betrang2}) reduces to
\begin{eqnarray}\nonumber
 \beta\in(0,\frac{\sqrt{33}-5}{2\|A_3^\top A_3\|}),\end{eqnarray}
 which is larger than that in \cite{CHY} for the special case of $m=3$:
\begin{eqnarray}\nonumber
\beta\in(0,\frac{6}{17\|A_3^\top A_3\|}).\end{eqnarray}
For $m\ge4$, we can derive a less restrictive range for $\beta$:
\begin{eqnarray}
\label{ergomg4}
\beta \in \left(0,
\min_{3\le i\le m} \frac{\mu_i}{\max\{4.5m-11\}\|A_i^\top A_i\|}\right).
 \end{eqnarray}
Indeed, setting $a=b=\frac{1}{6}$ and $\delta=\frac{1}{3}\frac{(m-2)}{(m-2+\epsilon')}$ $(\epsilon'>0)$.
Let $\epsilon'\rightarrow 0+$,
we know that $C_i>0$, $(i=2,\ldots,m)$ and $\tilde\zeta_i>0$ ($i=3,\ldots,m$)
when $\beta$ satisfies (\ref{ergomg4}). Thus,
the inequality (\ref{Erogcont})
  holds with $\tau_i=\frac{(i-3)}{2}+\frac{3(m-2)}{2}$ when $\beta$ is restricted to (\ref{ergomg4}). Thus, Theorem \ref{TA2} also holds and it ensures a worst-case $O(1/t)$ ergodic convergence rate in senses of both the variational inequality characterization (\ref{BK}) and the combination of the feasibility violation (\ref{primalergo}) and the decrease of the objective function (\ref{objergo}).
\end{remark}

\section{Asymptotically linear convergence under stronger conditions}

In this section, we show that it is possible to theoretically derive the globally linear convergence in the asymptotical sense for the e-ADMM (\ref{EADMM}) with $m\ge 3$. The results in Section \ref{conv} are useful for this purpose. Note that the asymptotically linear convergence is a very strong result and thus  more general assumptions are needed to ensure this result. We refer to \cite{LMZb} for some existing results about the linear convergence of the e-ADMM (\ref{EADMM}) under some conditions stronger than what we shall present now. Our assumptions to ensure the asymptotically linear convergence of the e-ADMM (\ref{EADMM}) are listed below.

\begin{ass}\label{ASm}
In \eqref{mini3}, $\theta_1$ is convex and $\theta_i$ $(i=2,\ldots,m)$ are strongly convex with modulus $\mu_i$. Moreover, one of the following conditions hold:
\begin{itemize}
\item [1)]
 One of $\nabla\theta_i$ ($i=2,\ldots,m$) is Lipschitz continuous with constant $L_i$, the corresponding $A_i$ is full row rank and the corresponding $\X_i=\Re^{n_i}$;
\item [2)]
 $\nabla\theta_1$ is Lipschitz continuous with constant $L_1$,  $A_1$ is nonsingular and $\X_1=\Re^{n_1}$.
 \end{itemize}
\end{ass}

\medskip

First of all, under Assumption \ref{ASm}, we can prove a result similar to \eqref{convtrasol}, i.e.,
\begin{eqnarray}\label{onestr}
&& \Phi(v^{k+1},v^k,v^*)\le\Phi(v^{k},v^{k-1},v^*)-\sum_{i=2}^m C_i\|A_i x_i^{k+1}-A_i x_i^k\|^2-\frac{1}{2\beta}\|z^k-z^{k+1}\|^2\nonumber\\
&&-\sum_{i=3}^m\zeta_i\|A_i x_i^{k+1}-A_i x_i^*\|^2-\mu_2\|x_2^{k+1}-x_2^*\|^2-\mu_2\|x_2^{k+1}-x_2^k\|^2,\nonumber\\
\end{eqnarray}
where $\Phi(v^{k+1},v^k,v)$ is defined in (\ref{Phik}). Recall that when the penalty parameter $\beta$ is restricted into (\ref{betrangup}), we know that all the constants $C_2$ in (\ref{rho}), $C_i$ $(i=3,\ldots,m)$ in (\ref{Ci}) and $\zeta_i$ in (\ref{zeta2}) are  positive. Thus, there exists a constant
$$
\varsigma:=\min\left\{\min_{3\le i\le m}\{\zeta_i\}, \min_{2\le i\le m}\{C_i\},   \frac{1}{2}\max_{1\le i\le m}\frac{\max\{4m-10, 3m-6.5\}\|A_i^\top A_i\|}{\mu_i},\mu_2\right\}>0
$$
such that
\begin{eqnarray}\label{K1}
&&\Phi(v^{k+1},v^k,v^*)\le\Phi(v^{k},v^{k-1},v^*)\nonumber\\
&&-\varsigma\left(\sum_{i=2}^m \|A_i x_i^{k+1}-A_i x_i^k\|^2+\|z^k-z^{k+1}\|^2+\sum_{i=3}^m\|A_i x_i^{k+1}-A_i x_i^*\|^2 +\|x_2^{k+1}-x_2^*\|^2+\|x_2^{k+1}-x_2^k\|^2\right).\nonumber\\
\end{eqnarray}
Indeed, according to \eqref{K1} and the definition of $\Phi(v^{k+1},v^k,v)$ in (\ref{Phik}), it is clear that we only need to  bound the terms $\|z^{k+1}- z^*\|^2$ in terms of the minus term in \eqref{K1}.
As we show below, this is exactly why we need to assume Assumption \ref{ASm}.

\medskip

\begin{lemma}\label{lamB}
Suppose Assumption \ref{ASm} holds. Let $w^*$ be a saddle point in ${\cal W}^*$ and $\{w^k\}$ be the sequence generated by the e-ADMM \eqref{EADMM} with $m\ge 3$. Then, there exists a constant $\sigma_1>0$ such that
\begin{eqnarray}
\label{LB1}
\|z^{k+1}- z^*\|^2\le \sigma_1\left(\sum_{i=2}^m \|A_i x_i^{k+1}-A_i x_i^k\|^2+\sum_{i=3}^m\|A_i x_i^{k+1}-A_i x_i^*\|^2\right.\nonumber\\
\left.+\|z^k-z^{k+1}\|^2+\|x_2^{k+1}-x_2^*\|^2+\|x_2^{k+1}-x_2^k\|^2 \right).\end{eqnarray}
\end{lemma}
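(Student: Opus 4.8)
The plan is to exploit the first-order optimality conditions of the subproblem associated with the function whose gradient is Lipschitz continuous, together with the rank hypothesis on the corresponding coefficient matrix, so as to recover $z^{k+1}-z^*$ itself (and not merely $A_i^\top(z^{k+1}-z^*)$), and then to dominate it by quantities already present on the right-hand side of \eqref{LB1}. The conceptual point is that strong convexity only controls the primal blocks, whereas the dual iterate $z^{k+1}$ must be handled separately through a left-inversion argument; the Lipschitz hypothesis is what turns the resulting gradient difference into an already-controlled primal term.

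Consider Case 1 of Assumption \ref{ASm}: some $\theta_{i_0}$ with $i_0\in\{2,\ldots,m\}$ has an $L_{i_0}$-Lipschitz gradient, $A_{i_0}$ is full row rank, and $\X_{i_0}=\Re^{n_{i_0}}$. Since $\X_{i_0}=\Re^{n_{i_0}}$ and $\theta_{i_0}$ is smooth, the variational inequality \eqref{x3e2} collapses to the equation
\[
\nabla\theta_{i_0}(x_{i_0}^{k+1})=A_{i_0}^\top\Big[z^{k+1}-\beta\sum_{j=i_0+1}^m A_j(x_j^k-x_j^{k+1})\Big],
\]
whereas the optimality condition \eqref{MA-VIxyl} at $w^*$ gives $\nabla\theta_{i_0}(x_{i_0}^*)=A_{i_0}^\top z^*$. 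Subtracting the two and left-multiplying by $(A_{i_0}A_{i_0}^\top)^{-1}A_{i_0}$ (well defined because $A_{i_0}$ is full row rank, and acting as a left inverse of $A_{i_0}^\top$), I would obtain
\[
z^{k+1}-z^*=(A_{i_0}A_{i_0}^\top)^{-1}A_{i_0}\big[\nabla\theta_{i_0}(x_{i_0}^{k+1})-\nabla\theta_{i_0}(x_{i_0}^*)\big]+\beta\sum_{j=i_0+1}^m A_j(x_j^k-x_j^{k+1}).
\]
Taking norms, using $\|a+b\|^2\le 2\|a\|^2+2\|b\|^2$ and the Lipschitz bound $\|\nabla\theta_{i_0}(x_{i_0}^{k+1})-\nabla\theta_{i_0}(x_{i_0}^*)\|\le L_{i_0}\|x_{i_0}^{k+1}-x_{i_0}^*\|$, everything reduces to $\|x_{i_0}^{k+1}-x_{i_0}^*\|^2$ and $\sum_{j=i_0+1}^m\|A_j(x_j^k-x_j^{k+1})\|^2$. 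The latter group already appears in \eqref{LB1}; for the former, if $i_0=2$ then $\|x_2^{k+1}-x_2^*\|^2$ occurs verbatim in \eqref{LB1}, while if $i_0\ge3$ I would invoke the full column rank of $A_{i_0}$ (retained from Assumption \ref{AS1}) to write $\|x_{i_0}^{k+1}-x_{i_0}^*\|^2\le\|A_{i_0}x_{i_0}^{k+1}-A_{i_0}x_{i_0}^*\|^2/\lambda_{\min}(A_{i_0}^\top A_{i_0})$, which again lands in \eqref{LB1}.

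For Case 2 ($\nabla\theta_1$ Lipschitz, $A_1$ nonsingular, $\X_1=\Re^{n_1}$), the same differencing of the $x_1$-optimality at the iterate and at $w^*$ gives $A_1^\top(z^{k+1}-z^*)=\nabla\theta_1(x_1^{k+1})-\nabla\theta_1(x_1^*)+\beta A_1^\top\sum_{j=2}^m A_j(x_j^k-x_j^{k+1})$, and inverting $A_1^\top$ yields $z^{k+1}-z^*=(A_1^\top)^{-1}[\nabla\theta_1(x_1^{k+1})-\nabla\theta_1(x_1^*)]+\beta\sum_{j=2}^m A_j(x_j^k-x_j^{k+1})$. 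The extra wrinkle here is that $\|x_1^{k+1}-x_1^*\|$ is not yet controlled, so I would first use nonsingularity to write $\|x_1^{k+1}-x_1^*\|\le\|A_1^{-1}\|\,\|A_1 x_1^{k+1}-A_1 x_1^*\|$, and then subtract the feasibility identities $\sum_i A_i x_i^{k+1}-b=\tfrac1\beta(z^k-z^{k+1})$ (from \eqref{lap}) and $\sum_i A_i x_i^*=b$ to get $A_1(x_1^{k+1}-x_1^*)=\tfrac1\beta(z^k-z^{k+1})-\sum_{i=2}^m A_i(x_i^{k+1}-x_i^*)$. Bounding this by $\|z^k-z^{k+1}\|^2$, $\|A_2\|^2\|x_2^{k+1}-x_2^*\|^2$ and $\sum_{i=3}^m\|A_i x_i^{k+1}-A_i x_i^*\|^2$ brings all contributions inside the right-hand side of \eqref{LB1}.

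The main obstacle is conceptual rather than computational: the strong convexity of the primal blocks gives no handle whatsoever on the multiplier $z^{k+1}$, so the lemma hinges entirely on the rank hypothesis permitting one to left-invert $A_{i_0}^\top$ (or $A_1^\top$) and thereby express the optimality residual as an explicit, dominated formula for $z^{k+1}-z^*$. Once the two cases are in hand, collecting the finitely many matrix-norm and Lipschitz constants and taking $\sigma_1$ to be a suitable maximum of them yields \eqref{LB1}; the only care needed is the feasibility-based detour in Case 2 used to control $\|A_1 x_1^{k+1}-A_1 x_1^*\|$.
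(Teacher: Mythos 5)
Your proposal is correct and takes essentially the same route as the paper: in Case 1 you difference the gradient-form optimality conditions of the Lipschitz block and left-invert $A_{i_0}^\top$ via full row rank (the paper does the equivalent step by lower-bounding $\|A_{i_0}^\top(z^{k+1}-z^*)\|$ by $\sqrt{\lambda_{\min}(A_{i_0}A_{i_0}^\top)}\,\|z^{k+1}-z^*\|$), and in Case 2 you use exactly the paper's feasibility detour from \eqref{lap} to control $A_1(x_1^{k+1}-x_1^*)$ before applying the Lipschitz and nonsingularity bounds. Your explicit conversion of $\|x_{i_0}^{k+1}-x_{i_0}^*\|$ into $\|A_{i_0}x_{i_0}^{k+1}-A_{i_0}x_{i_0}^*\|$ for $i_0\ge 3$ via the full column rank retained from Assumption \ref{AS1} is a detail the paper glosses over when asserting that \eqref{LB1} follows, and you fill it in correctly.
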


\begin{proof}
We consider two cases.

\indent{\bf Case I}: `` One of $\nabla\theta_i$ ($i=2,\ldots,m$) is Lipschitz continuous with constant $L_i$, the corresponding $A_i$ is full row rank and the corresponding $\X_i=\Re^{n_i}$". The optimality conditions for the $x_i$- and $ x_i^*$-subproblems are respectively:
\begin{eqnarray}\nonumber
A_i^\top z^{k+1}= \nabla\theta_i(x_i^{k+1})+\beta A_i^\top \sum_{j=i+1}^m(A_j x_j^{k}-A_j x_j^{k+1}),\end{eqnarray}
and
\begin{eqnarray}\nonumber
-A_i^\top  z^*= -\nabla\theta_i(x_i^*).\end{eqnarray}
Adding these two equalities, we get
\begin{eqnarray}
\label{B21}
&& \sqrt{\lambda_{min}(A_iA_i^\top )}\|z^{k+1}-z^*\|\le\|A_i^\top  z^{k+1}-A_i^\top z^*\| \le\|\nabla\theta_i(x_i^{k+1})-\nabla\theta_i(x_i^*)\|+\beta \|A_i\|\sum_{j=i+1}^m\|A_jx_j^{k}- A_jx_j^{k+1}\|\nonumber\\
 &&\le L_i\|  x_i^{k+1}- x^*\|
 +\beta \|A_i\|\sum_{j=i+1}^m\|A_jx_j^{k}- A_j x_j^{k+1}\|,\;\;i=2,\ldots,m.
\end{eqnarray}
Then, there exists a constant $\sigma_1>0$ such that conclusion (\ref{LB1}) follows.

\indent{\bf Case II}: ``$\nabla\theta_1$ is Lipschitz continuous with constant $L_1$,  $A_1$ is nonsingular and $\X_1=\Re^{n_1}$".  It follows from (\ref{lap}) and the last equation in (\ref{MA-VIxyl}) that
\begin{eqnarray}\nonumber
A_1x_1^{k+1}-A_1 x_1^*=\frac{1}{\beta}(z^k-z^{k+1})-\sum_{i=2}^m(A_i x_i^{k+1}-A_i x_i^*).\end{eqnarray}
Then, because $A_1$ is nonsingular, we have
\begin{eqnarray*}
\sqrt{\lambda_{min}(A_1^\top A_1)}\|x_1^{k+1}-x_1^*\| &\le&\frac{1}{\beta}\|z^k-z^{k+1}\|+\sum_{i=2}^m\|A_i x_i^{k+1}-A_i x_i^* \|\nonumber\\
&\le&\frac{1}{\beta}\|z^k-z^{k+1}\|+\sum_{i=3}^m\|A_i x_i^{k+1}-A_i x_i^* \|+\sqrt{\|A_2^\top A_2\|}\|x_2^{k+1}-x_2^*\|.
\end{eqnarray*}
On the other hand, similarly as (\ref{B21}), we get
\begin{eqnarray}\nonumber
\sqrt{\lambda_{min}(A_1A_1^\top )}\|z^{k+1}-z^*\| \le L_1\|  x_1^{k+1}- x^*\|
 +\beta \|A_1\|\sum_{j=2}^m\|A_j x_j^{k}- A_j x_j^{k+1}\|.\end{eqnarray}
Combining these two inequalities, the conclusion (\ref{LB1}) follows immediately.
\end{proof}

Now, we can derive the asymptotically linear convergence of the e-ADMM \eqref{EADMM} under Assumptions \ref{ASm}. To compare with Theorems 3.1-3.3 in \cite{LMZb}, we just show the linear convergence of the sequence $\{(A_1x_1^{k+1},A_2 x_2^{k+1}, \ldots,A_mx_m^{k+1},z^{k+1})\}$. If further assumptions are assumed such as that all $A_i$ $(i=1,\cdots,m)$ are assumed to be full column rank, it is trivial to derive the linear convergence of
the sequence $\{(x_1^{k+1}, x_2^{k+1}, \ldots,x_m^{k+1},z^{k+1})\}$. We skip the detail for succinctness.

\begin{theorem}\label{LR1}
Suppose Assumptions \ref{ASm}. Let $\{(x_1^{k+1}, x_2^{k+1}, \ldots,x_m^{k+1},z^{k+1})\}$ be the sequence generated by the e-ADMM (\ref{EADMM}) with $m\ge 3$ and the restriction of $\beta$ (\ref{betrangup}). Then the sequence $\{(A_1x_1^{k+1},A_2 x_2^{k+1},$ $\ldots,A_mx_m^{k+1},z^{k+1})\}$ converges linearly to a point in $\{(A_1x_1^*,A_2 x_2^*, \ldots,A_mx_m^*,z^*)|w^*\in{\cal W}^*\}$.
\end{theorem}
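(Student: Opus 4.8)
The plan is to convert the one-step decrease estimate (\ref{K1}) into a genuine geometric contraction of the potential function $\Phi$ defined in (\ref{Phik}). Abbreviate the five squared quantities on the right-hand side of (\ref{K1}) as
\begin{eqnarray*}
E^{k+1}:=\sum_{i=2}^m \|A_i x_i^{k+1}-A_i x_i^k\|^2+\|z^k-z^{k+1}\|^2+\sum_{i=3}^m\|A_i x_i^{k+1}-A_i x_i^*\|^2 +\|x_2^{k+1}-x_2^*\|^2+\|x_2^{k+1}-x_2^k\|^2,
\end{eqnarray*}
so that (\ref{K1}) reads $\Phi(v^{k+1},v^k,v^*)\le \Phi(v^k,v^{k-1},v^*)-\varsigma E^{k+1}$. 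The crux is to prove a reverse bound $\Phi(v^{k+1},v^k,v^*)\le C_0\,E^{k+1}$ for some constant $C_0>0$; combining the two inequalities then yields the contraction directly.

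To establish the reverse bound I would inspect each summand of $\Phi(v^{k+1},v^k,v^*)$ in (\ref{Phik}). The weighted increments $\|A_i x_i^{k+1}-A_i x_i^k\|^2$ and the primal distances $\|A_i x_i^{k+1}-A_i x_i^*\|^2$ for $i=3,\ldots,m$ appear verbatim inside $E^{k+1}$; the block-$2$ distance satisfies $\|A_2 x_2^{k+1}-A_2 x_2^*\|^2\le \|A_2^\top A_2\|\,\|x_2^{k+1}-x_2^*\|^2$, with $\|x_2^{k+1}-x_2^*\|^2$ a summand of $E^{k+1}$. The only summand of $\Phi$ not directly dominated by $E^{k+1}$ is the dual term $\frac{1}{2\beta}\|z^{k+1}-z^*\|^2$, and this is exactly where Lemma \ref{lamB} enters: the estimate (\ref{LB1}) bounds $\|z^{k+1}-z^*\|^2$ by a constant multiple of precisely the quantities assembled in $E^{k+1}$. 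Summing the individual bounds produces $\Phi(v^{k+1},v^k,v^*)\le C_0 E^{k+1}$, with $C_0$ depending only on $\beta$, $\sigma_1$, the $\|A_i^\top A_i\|$, and the coefficients in (\ref{Phik}).

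Inserting $E^{k+1}\ge C_0^{-1}\Phi(v^{k+1},v^k,v^*)$ into (\ref{K1}) gives $(1+\varsigma/C_0)\Phi(v^{k+1},v^k,v^*)\le \Phi(v^k,v^{k-1},v^*)$, i.e.\ $\Phi$ decays by the fixed factor $C_0/(C_0+\varsigma)<1$ at each iteration, hence $\Phi(v^{k+1},v^k,v^*)\to 0$ linearly. Taking $w^*$ to be the limit point $w^\infty$ furnished by Theorem \ref{Thm-convergence} and using $\Phi(v^{k+1},v^k,v^*)\ge \frac{\beta}{2}\sum_{i=2}^m\|A_i x_i^{k+1}-A_i x_i^*\|^2+\frac{1}{2\beta}\|z^{k+1}-z^*\|^2$, this yields linear convergence of $A_i x_i^{k+1}\to A_i x_i^\infty$ ($i=2,\ldots,m$) and $z^{k+1}\to z^\infty$. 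For the first block I would use the feasibility identity $A_1 x_1^{k+1}-A_1 x_1^*=\frac{1}{\beta}(z^k-z^{k+1})-\sum_{i=2}^m(A_i x_i^{k+1}-A_i x_i^*)$ coming from (\ref{lap}) (already noted in the proof of Lemma \ref{lamB}); bounding $\|z^k-z^{k+1}\|\le\|z^k-z^*\|+\|z^{k+1}-z^*\|$ shows the right-hand side vanishes linearly, hence so does $\|A_1 x_1^{k+1}-A_1 x_1^*\|$, completing the proof.

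The main obstacle is the dual term: the one-step decrease $E^{k+1}$ consists entirely of primal increments and primal distances to the solution, so a priori it gives no control over $\|z^{k+1}-z^*\|^2$, which nonetheless sits inside the potential $\Phi$. Supplying this missing control is the sole purpose of the Lipschitz-gradient and rank hypotheses in Assumption \ref{ASm}, packaged as Lemma \ref{lamB}; once (\ref{LB1}) is available, the remaining steps are routine.
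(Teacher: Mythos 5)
Your proposal is correct and takes essentially the same route as the paper's own proof: both invoke Lemma \ref{lamB} to control the dual distance $\|z^{k+1}-z^*\|^2$, thereby bounding $\Phi(v^{k+1},v^k,v^*)$ by a constant multiple $\sigma'$ of the decrease terms in (\ref{K1}), which yields the contraction $\Phi(v^{k+1},v^k,v^*)\le\frac{\sigma'}{\varsigma+\sigma'}\Phi(v^{k},v^{k-1},v^*)$ and hence R-linear convergence of the component sequences, with the first block recovered from the feasibility identity $A_1x_1^{k+1}-A_1x_1^*=\frac{1}{\beta}(z^k-z^{k+1})-\sum_{i=2}^m(A_ix_i^{k+1}-A_ix_i^*)$ derived from (\ref{lap}). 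Your only addition is to spell out the term-by-term domination of the summands of $\Phi$ in (\ref{Phik}) by $E^{k+1}$ (including $\|A_2x_2^{k+1}-A_2x_2^*\|^2\le\|A_2^\top A_2\|\,\|x_2^{k+1}-x_2^*\|^2$), a step the paper leaves implicit.
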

\begin{proof}
Let $w^*$ be a saddle point in ${\cal W}^*$. It follows from \eqref{LB1} that there exists a positive scalar $\sigma'$ such that
\begin{eqnarray}\nonumber
&&\Phi(v^{k+1},v^k,v^*)\nonumber\\
&&\le\sigma'\left(\sum_{i=2}^m \|A_i x_i^{k+1}-A_i x_i^k\|^2+\|z^k-z^{k+1}\|^2+\sum_{i=3}^m\|A_i x_i^{k+1}-A_i x_i^*\|^2 +\|x_2^{k+1}-x_2^*\|^2+\|x_2^{k+1}-x_2^k\|^2\right).\nonumber
\end{eqnarray}
Then, combining \eqref{K1} and the above inequality, we obtain
\begin{eqnarray*}
\Phi(v^{k+1},v^k,v^*)\le\frac{\sigma'}{\zeta+\sigma'}\Phi(v^{k},v^{k-1},v^*).\end{eqnarray*}
It implies the Q-linearly convergence rate of the sequence $\{\Phi(v^{k+1},v^k,v^*)\}$.
 Thus, we know that the sequences $\{\|A_i x_i^{k+1}-A_i x_i^*\|^2\}$ $(i=2,\ldots,m)$, $\{\|z^{k+1}-z^*\|^2\}$ and $\{\|A_i x_i^{k+1}-A_i x_i^k\|^2\}$  $(i=3,\ldots,m)$ all converges R-linearly.
 Recall that
 \begin{eqnarray}\nonumber
 \|z^k-z^{k+1}\|^2\le2(\|z^k-z^*\|^2+\|z^{k+1}-z^*\|^2).
 \end{eqnarray}
The sequence $\|z^k-z^{k+1}\|^2$ also converges R-linearly. Finally, it follows from (\ref{lap}) that
 \begin{eqnarray}\nonumber
 A_1x_1^{k+1}-A_1 x_1^*=\frac{1}{\beta}(z^k-z^{k+1})-\sum_{i=2}^m(A_i x_i^{k+1}-A_i x_i^* ).
 \end{eqnarray}
 Then, using Cauchy-Schwarz inequality, we have
 \begin{eqnarray}
 \nonumber
 \|A_1x_1^{k+1}-A_1 x_1^*\|^2\le 2\left(\frac{1}{\beta^2}\|z^k-z^{k+1}\|^2+(m-1)\sum_{i=2}^m\|A_i x_i^{k+1}-A_i x_i^*\|^2\right).
 \end{eqnarray}
 Therefore, the sequence $\{\|A_1x_1^{k+1}-A_1 x_1^*\|^2\}$ also converges R-linearly because of the R-linear convergence of the sequences $\{\|z^k-z^{k+1}\|^2\}$ and $\{\|A_i x_i^{k+1}-A_i x_i^*\|^2\}$ $(i=2,\ldots,m)$.
 The proof is complete.
\end{proof}


\begin{table}
\begin{center}
{\scriptsize \caption{Comparison of assumptions and restrictions on $\beta$ with \cite{LMZb}}\label{Table1.1}}
 {\scriptsize \vskip 0mm
\begin{tabular}{|c|l|l|}
   \hline
{Scenario}&{ Assumptions in \cite{LMZb}}&{Assumption \ref{ASm}}\\
\cline{2-3}
&   $\X_i=\Re^{n_i}$ $(i=1,\ldots,n)$&    General nonempty closed convex sets    \\
\hline
1.        &$\theta_2,\cdots,\theta_m$ are strongly convex;  & $\theta_2,\cdots,\theta_m$ are strongly convex;\\
          &$\nabla \theta_m$ is Lipschitz continuous        & One of $\nabla \theta_i$ $(i=2,\cdots,m)$ is Lipschitz continuous,  \\
          &$A_m$ is full row rank                           & and $A_i$ is full row rank with $\X_i=\Re^{n_i}$.\\
          &$\left(0,\min\left\{\min_{2\le i\le m-1}\frac{4\mu_i}{(2m-i)(i-1)\|A_i^\top A_i\|},\frac{4\mu_m}{(m+1)(m-2)\|A_m^\top A_m\|} \right\}\right)$&$\left(0,\;
\min_{3\le i\le m} \{\frac{\mu_i}{\max\{4m-10,3m-6.5\}\|A_i^\top A_i\|}\}\right)$\\
          &&\\
          &&\\
          \hline
2.        & $\theta_1,\cdots,\theta_m$ are strongly convex; & $\theta_2,\cdots,\theta_m$ are strongly convex;\\
          & $\nabla\theta_1,\cdots,\nabla\theta_m$  are Lipschitz continuous  & $\nabla \theta_1$  is Lipschitz continuous, \\
          &&and $A_1$ is nonsingular with $\X_i=\Re^{n_i}$\\
          &$\left(0,\min\left\{\min_{2\le i\le m-1}\frac{4\mu_i}{3(2m-i)(i-1)\|A_i^\top A_i\|},\frac{4\mu_m}{3m^2-3m-2\|A_m^\top A_m\|} \right\}\right)$  & $\left(0,\;
\min_{3\le i\le m} \{\frac{\mu_i}{\max\{4m-10,3m-6.5\}\|A_i^\top A_i\|}\}\right)$\\
          &&\\
          &&\\
          \hline
3.        & $\theta_2,\cdots,\theta_m$ are strongly convex;&\\
          & $\nabla\theta_1,\cdots,\nabla\theta_m$  are Lipschitz continuous  &\\
          & $A_1$ is full column rank.&\\
          & $\left(0,\min\left\{\min_{2\le i\le m-1}\frac{4\mu_i}{3(2m-i)(i-1)\|A_i^\top A_i\|},\frac{4\mu_m}{3m^2-3m-2\|A_m^\top A_m\|} \right\}\right)$  &\\
\hline
\end{tabular}}
\end{center}
\end{table}

\begin{remark}
In \cite{LMZb}, three scenarios are considered to ensure the linear convergence of e-ADMM (\ref{EADMM}). We list them in Table \ref{Table1.1}. Note that all the cases in \cite{LMZb} additionally require ${\cal X}_i={\Re}^{n_i}$ $(i=1,\ldots,m)$. Scenario 1 in Table 2 of \cite{LMZb} is included in our Assumption \ref{ASm}; while we can easily establish the linear convergence of the e-ADMM with (\ref{betrangup}) for Scenarios 2 and 3 in Table 2 of \cite{LMZb} by following the roadmap of the proof of Theorem \ref{LR1}. For succinctness, we omit the proof details for Scenarios 2 and 3  in Table 2 of \cite{LMZb}.
Now we elaborate on the difference of the restrictions on $\beta$ in Table \ref{Table1.1}. Note that the denominator of the upper bound for $\beta$ in (\ref{betrangup}) is a linear function of $m$ while that in \cite{LMZb} is  quadratic. So it is not hard to see that our restriction of $\beta$ (\ref{betrangup}) is less restrictive than those in \cite{LMZb}. More specifically, for example, if we consider the case of $m=15$ and $\mu_i\equiv\mu$ ($i=2,\ldots,m$), then the ranges of $\beta$ in \cite{LMZb} for Scenarios 1 and 2 are $(0,\frac{\mu}{52})$ and $(0,\frac{\mu}{157})$, respectively; while those in (\ref{betrangup}) for both cases are $(0,\frac{\mu}{50})$. The difference of $\beta$'s range becomes more apparent for larger values of $m$.
\end{remark}

\section{Two assertions}\label{ext}

In this section, we construct some examples to show that the e-ADMM (\ref{EADMM}) with $m\ge 3$ are divergent if the model (\ref{mini3}) or the penalty parameter $\beta$ in (\ref{EADMM}) is not appropriately assumed. These examples exclude the hope of ensuring the convergence of (\ref{EADMM}) under too mild assumptions and to some extent justify the rationale of our assumptions for discussing the convergence of the e-ADMM (\ref{EADMM}) with $m\ge 3$. In particular, we verify the following assertions.

\begin{itemize}
\item The e-ADMM  may be divergent for solving (\ref{mini3}) if $(m-3)$ functions are strongly convex for any penalty parameter $\beta>0$ with $m\ge4$;
\item The e-ADMM  may be divergent for solving (\ref{mini3}) if $(m-2)$ functions are strongly convex without any restriction on the penalty parameter $\beta$ with $m\ge3$.
\end{itemize}

\subsection{Application of e-ADMM to a linear homogeneous equation}

Inspired by \cite{CHYY}, we consider a linear homogeneous equation with $m$ variables
\begin{equation}
\label{EQFour}
\sum_{i=1}^m A_i x_i=0,
\end{equation}
where $A_i\in\Re^m$ $(i=1,\ldots,m)$ are all column vectors and the matrix $A:=[A_1,\ldots,A_m]$ is assumed to be nonsingular. Obviously, the equation (\ref{EQFour}) has the unique solution $x_i={ 0}$ $(i=1,\ldots,m)$.
The equation (\ref{EQFour}) is a special case of the model (\ref{mini3}) where the objective function is null, $b$ is the all-zero vector in $\Re^m$ and ${\cal X}_i =\Re$ for $i=1,\ldots,m$.

Applying the e-ADMM (\ref{EADMM}) with $\beta>0$ to (\ref{EQFour}), we obtain
\begin{eqnarray}\label{EADMMEQ}
\left\{
\begin{array}{l}
-A_1^\top z^k+\beta A_1^\top (A_1 x_1^{k+1}+A_2 x_2^{k}+\cdots+A_m x_m^k)=0,\\
-A_2^\top z^k+\beta A_2^\top (A_1 x_1^{k+1}+A_2 x_2^{k+1}+\cdots+A_m x_m^k)=0,\\
\qquad\qquad\cdots\cdots\quad\cdots\cdots\quad\cdots\cdots\\
-A_m^\top z^k+\beta A_m^\top (A_1 x_1^{k+1}+A_2 x_2^{k+1}+\cdots+A_m x_m^{k+1})=0,\\
z^{k+1}=z^k-\beta(A_1 x_1^{k+1}+A_2 x_2^{k+1}+\cdots+A_m x_m^{k+1}).
\end{array}
\right.
\end{eqnarray}
Introducing the new variable $\mu^k:=z^k/\beta$, the scheme (\ref{EADMMEQ}) can be rewritten as
\begin{eqnarray}\label{iterM}
\left(\begin{array}{c}
x_2^{k+1}\\
\vdots\\
x_m^{k+1}\\
\mu^{k+1}
\end{array}
\right)=S
\left(\begin{array}{c}
x_2^{k}\\
\vdots\\
x_m^{k}\\
\mu^{k}
\end{array}
\right),\;\mbox{where}\; S=L^{-1}R,
\end{eqnarray}
with
\begin{eqnarray}
\label{matrixL}
L=\left(\begin{array}{cccccc}
A_2^\top A_2 &0           &\cdots &0         &0  &{\bf 0}_{1\times m}\\
A_3^\top A_2 &A_3^\top A_3&\cdots &0         &0  &{\bf 0}_{1\times m}\\
\vdots&\vdots&\ddots&\vdots&\vdots&\vdots\\
A_{m-1}^\top A_2 &\cdots&\cdots&A_{m-1}^\top A_{m-1}&0&{\bf 0}_{1\times m}\\
A_m^\top A_2&A_m^\top A_3&\cdots&A_m^\top A_{m-1}&A_{m}^\top A_m&{\bf 0}_{1\times m}\\
A_2&          A_3 &\cdots        &A_{m-1}&A_m         &I_{m\times m}
\end{array}
\right)
\end{eqnarray}
and
\begin{eqnarray}
\label{matrixR}
&&R=\left(\begin{array}{cccccc}
0&-A_2^\top A_3&\cdots&\cdots&-A_2^\top A_m  &A_2^\top\\
0&0            &\cdots&\cdots&-A_3^\top A_m  &A_3^\top\\
\vdots&\vdots&\ddots&\vdots&\vdots&\vdots\\
0&0            &\cdots &\ddots&-A_{m-1}^\top A_m &A_{m-1}^\top\\
0&0&\cdots&\cdots&0&A_m^\top\\
{\bf 0}_{m\times 1}  &{\bf 0}_{m\times 1}  &{\bf 0}_{m\times1}&\cdots&{\bf 0}_{m\times1}&I_{m\times m}
\end{array}
\right) \\
&&-\frac{1}{A_1^\top A_1}\left(\begin{array}{c}A_2^\top A_1\\A_3^\top A_1\\\vdots\\A_{m-1}^\top A_1\\ A_m^\top A_1\\A_1
\end{array}
 \right)\left( \begin{array}{c}-A_1^\top A_2,-A_1^\top A_3,\cdots,-A_{1}^\top A_{m-1},-A_1^\top A_m,A_1^\top
\end{array}\right).\nonumber\\
\end{eqnarray}
Therefore, the e-ADMM (\ref{EADMM}) for solving (\ref{EQFour}) is divergent if the spectral radius of $S$, denoted by $\rho(S)$,  is strictly larger than 1.
Note that $\rho(S)$ is independent of $\beta$. That is, when the e-ADMM (\ref{EADMM}) is applied to the special problem (\ref{EQFour}), the convergence is independent of the value of $\beta$.

Based on the analysis above, the divergence of (\ref{EADMM}) with $m=3$ for any $\beta>0$ has been illustrated in \cite{CHYY} by the example with $A$ defined as
\begin{eqnarray}\label{EQFour-1}
A=(A_1,A_2,A_3)=\left(
\begin{array}{llll}
1&1&1\\
1&1&2\\
1&2&2\\
\end{array}
\right).
\end{eqnarray}
For this case, we have $\rho(S)=  1.0278>1$ where $S$ is the corresponding matrix given in (\ref{iterM}).
We can extend the assertion to the more general case of $m \ge 3$. Indeed, the following theorem can be easily proved by mathematical induction; thus we omit the proof. But the same technique will be used for constructing other examples.

\begin{theorem}\label{theorem71}
For model (\ref{mini3}) with $m\ge 3$, the e-ADMM (\ref{EADMM}) is not necessarily convergent for any $\beta>0$.
\end{theorem}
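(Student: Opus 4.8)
The plan is to reduce the non-convergence assertion to a spectral statement about the linear map governing the e-ADMM iterates on the homogeneous system (\ref{EQFour}), and then to exhibit, for every $m\ge 3$, an instance on which that map has spectral radius exceeding $1$. Recall from (\ref{iterM}) that applying (\ref{EADMM}) to (\ref{EQFour}) makes the effective iterate $v^k=(x_2^k,\ldots,x_m^k,\mu^k)$ evolve as $v^{k+1}=Sv^k$ with $S=L^{-1}R$, and that $S$ is independent of $\beta$. Since $A=[A_1,\ldots,A_m]$ is nonsingular, the unique solution of (\ref{EQFour}) is $x_i=0$ with multiplier $z=0$ (stationarity forces $A^\top z=0$, hence $z=0$), so the origin is the only candidate limit. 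Therefore, if an instance can be produced with $\rho(S)>1$, then the real linear recursion $v^{k+1}=Sv^k$ admits a real starting point whose orbit is unbounded, so the e-ADMM cannot converge; and because $\rho(S)$ does not depend on $\beta$, divergence holds simultaneously for every $\beta>0$. This is the mechanism I would use throughout.

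First I would take the base case $m=3$ as given: for the matrix $A$ in (\ref{EQFour-1}) the associated iteration matrix satisfies $\rho(S)=1.0278>1$, as recorded in \cite{CHYY}. Then I would prove the statement for all $m\ge 3$ by induction, the inductive step being a structure-preserving embedding of an $m$-block counterexample into an $(m+1)$-block one. Suppose $A=[A_1,\ldots,A_m]\in\Re^{m\times m}$ is nonsingular and yields $\rho(S_m)>1$, where $S_m$ denotes the corresponding iteration matrix from (\ref{iterM}). Define the $(m+1)\times(m+1)$ matrix
\[
A'=\begin{pmatrix} A & 0 \\ 0 & 1 \end{pmatrix},
\]
so that its columns are $A_i'=(A_i^\top,0)^\top$ for $i\le m$ and $A_{m+1}'=e_{m+1}$; clearly $A'$ is again nonsingular. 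The key observation is that $A_{m+1}'$ is orthogonal to every $A_i'$ $(i\le m)$, and the last coordinate of $\sum_i A_i'x_i$ equals $x_{m+1}$ alone. Consequently the e-ADMM iteration on $A'$ splits into two mutually decoupled linear subsystems: the block $(x_1,\ldots,x_m,z_{[1:m]})$ updates exactly as the $m$-block e-ADMM for $A$ — its $x_i$-subproblems $(i\le m)$ see neither $x_{m+1}$ nor $z_{[m+1]}$ because $(A_i')^\top A_{m+1}'=0$ and $(A_i')^\top z=A_i^\top z_{[1:m]}$ — while $(x_{m+1},z_{[m+1]})$ forms a separate scalar subsystem. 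Hence, after reordering, the iteration matrix for $A'$ is block diagonal with $S_m$ as one diagonal block, so $\rho(S_{m+1})\ge\rho(S_m)>1$, completing the induction.

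The main obstacle, and really the only nontrivial point, is verifying that the decoupling is exact: one must check componentwise that the $x_i$-updates for $i\le m$ and the first $m$ coordinates of the multiplier update (\ref{lap}) involve only $x_1,\ldots,x_m$ and $z_{[1:m]}$, and symmetrically that the $x_{m+1}$- and $z_{[m+1]}$-updates involve only $x_{m+1}$ and $z_{[m+1]}$. This follows from the orthogonality relations $(A_i')^\top A_{m+1}'=0$ and $(A_{m+1}')^\top A_i'=0$ together with the separation of the last coordinate, and it is what guarantees that $S_m$ is a genuine invariant block of $S_{m+1}$ rather than an approximation. An alternative, should one prefer to keep $A'$ generic without an exactly orthogonal column, is to append $A_{m+1}'=e_{m+1}+\varepsilon c$ for a fixed $c$ and small $\varepsilon>0$; then $S_{m+1}$ is a small perturbation of the block-diagonal matrix above, and the continuity of the spectral radius established in Lemma \ref{conspec} ensures $\rho(S_{m+1})>1$ for all sufficiently small $\varepsilon$. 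Either route yields a nonsingular $A'$ of every order $m\ge 3$ on which the e-ADMM diverges for all $\beta>0$.
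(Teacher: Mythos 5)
Your proof is correct and follows essentially the same route as the paper: induction on $m$ from the \cite{CHYY} base case (\ref{EQFour-1}) with $\rho(S)=1.0278$, extending via a block-diagonal matrix $A'=\mathrm{diag}(A,1)$ whose orthogonal extra column makes the iteration decouple, so that $S_m$ sits as an invariant diagonal block of $S_{m+1}$ and $\rho(S_{m+1})\ge\rho(S_m)>1$ independently of $\beta$. The paper merely asserts this induction and omits the details, but the technique it invokes (visible in the construction of $\check A$ in (\ref{barAM})) is exactly your embedding, and your explicit verification of the decoupling, together with the perturbation fallback via Lemma \ref{conspec}, supplies precisely what the paper leaves out.
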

\begin{proof}Indeed, we can use mathematical induction on $m$ to show that for any $m\ge3$, there exists one specific matrix $ A^{(m)}\in\Re^{m\times m}$ such that
 the corresponding iterative matrix, i.e., $S$ in (\ref{iterM}), satisfies $\rho(S)>1$ when e-ADMM is applied to (\ref{EQFour}) with $A:= A^{(m)}$.
\end{proof}

\subsection{The divergence of (\ref{EADMM}) with $(m-3)$ strongly convex functions for any $\beta>0$ with $m\ge 4$}
\label{sect72}
Let us consider the problem
\begin{eqnarray}
\label{Fourstrcov}
\begin{array}{cl}
\min_x& 0.05 x_4^2\\
s.t.&\left( \begin{array}{llll}
1&1&1&0\\
1&1&2&0\\
1&2&2&0\\
0&0&0&1\end{array}\right)\left(\begin{array}{c}
x_1\\
x_2\\
x_3\\
x_4
\end{array}\right)=0
\end{array}
\end{eqnarray}
which corresponds to the model (\ref{mini3}) with one strongly convex function in its objective.
Let us denote the coefficient matrix of the linear constraint in (\ref{Fourstrcov}) by $\hat A: =[\hat A_1,\hat A_2,\hat A_3,\hat A_4]$. Applying the e-ADMM (\ref{EADMM}) with $\beta>0$ to (\ref{Fourstrcov}),
we obtain
\begin{eqnarray}\nonumber
\left\{
\begin{array}{l}
-{\hat A}_1^\top \mu^k+ {\hat A_1}^\top ({\hat A_1} x_1^{k+1}+{\hat A}_2 x_2^{k}+{\hat A}_3 x_3^{k}+{\hat A}_4 x_4^k)=0,\\
-{\hat A}_2^\top \mu^k+{\hat A}_2^\top ({\hat A}_1 x_1^{k+1}+{\hat A}_2 x_2^{k+1}+{\hat A}_3 x_3^{k}+{\hat A}_4 x_4^k)=0,\\
-{\hat A}_3^\top \mu^k+{\hat A}_3^\top ({\hat A}_1 x_1^{k+1}+{\hat A}_2 x_2^{k+1}+{\hat A}_3 x_3^{k+1}+{\hat A}_4 x_4^k)=0,\\
-{\hat A}_4^\top \mu^k+ {\hat A}_4^\top ({\hat A}_1 x_1^{k+1}+{\hat A}_2 x_2^{k+1}+{\hat A}_3 x_3^{k+1}+{\hat A}_4 x_4^{k+1})+\frac{0.1}{\beta} x_4^{k+1}=0,\\
\mu^{k+1}=\mu^k-({\hat A}_1 x_1^{k+1}+{\hat A}_2 x_2^{k+1}+{\hat A}_3 x_3^{k+1}+{\hat A}_4 x_4^{k+1}),
\end{array}
\right.
\end{eqnarray}
which depends on the penalty parameter $\beta$.
Note ${\hat A}_4^\top {\hat A}_i=0$ $(i=1,2,3)$. Consequently, we have
\begin{eqnarray}\nonumber
\left\{
\begin{array}{l}
-{\hat A}_1^\top \mu^k+ {\hat A_1}^\top ({\hat A_1} x_1^{k+1}+{\hat A}_2 x_2^{k}+{\hat A}_3 x_3^{k})=0,\\
-{\hat A}_2^\top \mu^k+{\hat A}_2^\top ({\hat A}_1 x_1^{k+1}+{\hat A}_2 x_2^{k+1}+{\hat A}_3 x_3^{k})=0,\\
-{\hat A}_3^\top \mu^k+{\hat A}_3^\top ({\hat A}_1 x_1^{k+1}+{\hat A}_2 x_2^{k+1}+{\hat A}_3 x_3^{k+1})=0,\\
x_4^{k+1}={\hat A}_4^\top \mu^k/(1+\frac{0.1}{\beta}),\\
\mu^{k+1}=\mu^k-({\hat A}_1 x_1^{k+1}+{\hat A}_2 x_2^{k+1}+{\hat A}_3 x_3^{k+1}+{\hat A}_4 x_4^{k+1}).
\end{array}
\right.
\end{eqnarray}
Moreover, if we set ${\hat \mu}:=\mu_{[1:3]}$, and recall the definition of $A$ in (\ref{EQFour-1}), we get
\begin{eqnarray}\label{tmp1-D}
\left\{
\begin{array}{l}
-{A}_1^\top {\hat \mu}^k + {A}_1^\top ({ A_1} x_1^{k+1}+{ A}_2 x_2^{k}+{ A}_3 x_3^{k})=0,\\
-{A}_2^\top {\hat \mu}^k +{A}_2^\top ({A}_1 x_1^{k+1}+{ A}_2 x_2^{k+1}+{ A}_3 x_3^{k})=0,\\
-{A}_3^\top {\hat \mu}^k +{A}_3^\top ({A}_1 x_1^{k+1}+{ A}_2 x_2^{k+1}+{ A}_3 x_3^{k+1})=0,\\
x_4^{k+1}=\mu_{[4]}^k/(1+\frac{0.1}{\beta}),\\
{\hat \mu}^{k+1}={\hat \mu}^k-({ A}_1 x_1^{k+1}+{ A}_2 x_2^{k+1}+{ A}_3 x_3^{k+1}),\\
\mu^{k+1}_{[4]}=\mu^k_{[4]}-x_4^{k+1}.
\end{array}
\right.
\end{eqnarray}

Also, we denote $y^\top:=(x_2^\top,x_3^\top,{\hat \mu}^\top)$. Then, the iterative scheme (\ref{tmp1-D}) is
\begin{eqnarray}\label{tmp2-D}
\left(\begin{array}{l}
y^{k+1}\\
\mu_{[4]}^{k+1}\\
x_4^{k+1}\end{array}\right)=\left(\begin{array}{lll}
S&{\bf 0}_{5 \times 1}&{\bf 0}_{5 \times 1}\\
{\bf 0}_{1 \times 5}&1&-1\\
{\bf 0}_{1 \times 5}&1/(1+\frac{0.1}{\beta})&0
\end{array}
\right)
\left(\begin{array}{l}
y^{k}\\
\mu_{[4]}^{k}\\
x_4^{k}\end{array}\right).
\end{eqnarray}
where $S$ is the matrix given in (\ref{iterM}) when the e-ADMM is applied to (\ref{EQFour}) with $A$ defined in (\ref{EQFour-1}).
Let $\hat S(\beta)$ be the coefficient matrix given in (\ref{tmp2-D}), which is clearly dependent on $\beta$. Then, we have $\rho(\hat S(\beta))\ge\rho(S)=1.0278>1$ for any $\beta>0$.
 In fact, we have $\rho(\hat S(\beta))\ge\rho(S)$ because the absolute value of the maximum eigenvalue of the $2\times 2$ submatrix in the lower right corner of the coefficient matrix in (\ref{tmp2-D}) is no big than 1. Hence, the e-ADMM (\ref{EADMM}) may be divergent for any penalty parameter $\beta>0$ if $m=4$ and there is one strongly convex function. We extend the conclusion to the general case of $m\ge 4$ in the following theorem.

\begin{theorem}\label{theorem72}
For model (\ref{mini3}) with $m\ge 4$, the e-ADMM (\ref{EADMM}) is not necessarily convergent for an arbitrarily fixed $\beta>0$ if there are $(m-3)$ strongly convex functions in the objective of (\ref{mini3}).
\end{theorem}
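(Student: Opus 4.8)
The plan is to argue by induction on $m$. The base case $m=4$ is exactly the counterexample (\ref{Fourstrcov}), for which the reduced iteration (\ref{tmp2-D}) already exhibits divergence for every $\beta>0$. At each stage the construction preserves the three-variable divergent core governed by the matrix $A$ of (\ref{EQFour-1}), whose homogeneous e-ADMM iteration matrix $S$ in (\ref{iterM}) has $\rho(S)=1.0278>1$ \emph{independently of} $\beta$, and attaches to it $(m-3)$ mutually orthogonal blocks, one per strongly convex quadratic.

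Concretely, for a general $m\ge 4$ I would take the $m\times m$ coefficient matrix
\begin{equation*}
A^{(m)}=\begin{pmatrix} A & 0 \\ 0 & I_{m-3}\end{pmatrix},
\end{equation*}
so that $A_1,A_2,A_3$ are the columns of $A$ embedded in the top-left corner while $A_{3+j}=e_{3+j}$ for $j=1,\ldots,m-3$, and set the objective to $\sum_{j=1}^{m-3} c\,x_{3+j}^2$ with a fixed $c>0$ (e.g.\ $c=0.05$). Thus the three core functions are convex (in fact null) while the $(m-3)$ appended functions are strongly convex, as required. Because $A_{3+j}^\top A_i=0$ whenever $i\neq 3+j$, every augmented-Lagrangian cross term between an appended block and the remaining blocks vanishes; hence the $x_i$-subproblems for $i\le m$ never see any appended variable, the first $m$ coordinates of the multiplier update without reference to $x_{3+j}$ or $\mu_{[3+j]}$, and each pair $(x_{3+j},\mu_{[3+j]})$ evolves as an autonomous two-dimensional recursion, precisely as $(x_4,\mu_{[4]})$ does in (\ref{tmp2-D}).

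The key step is the resulting block structure of the full iteration matrix $\hat S^{(m)}(\beta)$. By the decoupling above it is block diagonal, with the $\beta$-independent core block $S$ and $(m-3)$ copies of a $2\times2$ block of the form occupying the lower-right corner of (\ref{tmp2-D}). Writing $t:=1/(1+2c/\beta)\in(0,1)$, that block has characteristic polynomial $\lambda^2-\lambda+t$, so its two eigenvalues satisfy $\lambda_1+\lambda_2=1$ and $\lambda_1\lambda_2=t$; a one-line case split (real roots lying in $(0,1)$ when $t\le\tfrac14$, and a conjugate pair of modulus $\sqrt t<1$ otherwise) shows its spectral radius is strictly below $1$ for every $\beta>0$. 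Since the spectrum of a block-triangular --- in particular block-diagonal --- matrix is the union of the spectra of its diagonal blocks, the eigenvalue of modulus $\rho(S)=1.0278$ survives in $\hat S^{(m)}(\beta)$, whence $\rho(\hat S^{(m)}(\beta))\ge\rho(S)>1$ for all $\beta>0$. This settles the base case and, by appending one further orthogonal strongly convex block, passes the conclusion from $m$ to $m+1$ while increasing the number of strongly convex functions from $(m-3)$ to $(m+1)-3$.

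The step I expect to demand the most care is pinning down the exactness of this decoupling: one must check that, despite the Gauss--Seidel ordering of the e-ADMM sweeps, placing each appended block last together with the orthogonality $A_{3+j}^\top A_i=0$ genuinely leaves the three-variable core dynamics identical to the pure homogeneous recursion (\ref{iterM}) for \emph{every} $\beta$, so that the divergent eigenvalue carried by $S$ is truly $\beta$-independent. Once that is in place, the uniform bound $\rho<1$ on the auxiliary blocks and the union-of-spectra argument are routine, and divergence for an arbitrarily fixed $\beta>0$ follows.
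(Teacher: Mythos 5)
Your proposal is correct and follows essentially the same route as the paper's proof: the identical block-diagonal embedding $\check A = \mathrm{diag}(A, I_{m-3})$ of the divergent $3\times 3$ example with orthogonal strongly convex quadratics appended, the same decoupling of the core dynamics (governed by the $\beta$-independent matrix $S$ with $\rho(S)=1.0278$) from the autonomous $2\times 2$ blocks, and the same block-triangular union-of-spectra conclusion $\rho(\hat S^{(m)}(\beta))\ge \rho(S)>1$. The only differences are cosmetic: your induction wrapper is unnecessary since the construction works directly for every $m\ge 4$, and your explicit analysis of $\lambda^2-\lambda+t$ nicely fills in the step the paper merely asserts, namely that the appended $2\times 2$ blocks have spectral radius strictly below $1$ for all $\beta>0$.
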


\begin{proof}
For any $m\ge4$, we consider the following convex programming:
\begin{eqnarray}\label{barAM}
\min& \frac{\sigma}{2}\sum_{i=4}^m x_i^2\nonumber\\
s.t.& \sum_{i=1}^m \check{A}_i x_i=0,
\end{eqnarray}
where $\check{A}:=[\check{A}_1,\ldots,\check{A}_m]\in\Re^{m\times m}$ or rewritten as
\begin{eqnarray}
\nonumber
\check{A}=\left(\begin{array}{ll}
A&{\bf 0}_{3\times (m-3)}\\
{\bf 0}_{(m-3)\times 3}&{\bf I}_{(m-3)\times (m-3)}
\end{array}\right),
\end{eqnarray}
where the $3\times 3$ matrix $A$ is defined in (\ref{EQFour-1}).
Applying the e-ADMM (\ref{EADMM}) with $\beta>0$ to (\ref{barAM}),
we obtain
\begin{eqnarray}\nonumber
\left\{
\begin{array}{l}
-{\check A}_1^\top \mu^k+ {\check A_1}^\top ({\check A_1} x_1^{k+1}+{\check A}_2 x_2^{k}+\cdots+{\check A}_m x_m^k)=0,\\
-{\check A}_2^\top \mu^k+{\check A}_2^\top ({\check A}_1 x_1^{k+1}+{\check A}_2 x_2^{k+1}+\cdots+{\check A}_m x_m^k)=0,\\
-{\check A}_3^\top \mu^k+{\check A}_3^\top ({\check A}_1 x_1^{k+1}+{\check A}_2 x_2^{k+1}+\cdots+{\check A}_m x_m^k)=0,\\
\qquad\cdots\cdots\quad\cdots\cdots\quad\cdots\cdots\quad\cdots\cdots\quad\cdots\cdots\qquad\\
-{\check A}_m^\top \mu^k+ {\check A}_m^\top ({\check A}_1 x_1^{k+1}+{\check A}_2 x_2^{k+1}+\cdots+{\check A}_m x_m^{k+1})+\frac{\sigma_m}{\beta} x_m^{k+1}=0,\\
\mu^{k+1}=\mu^k-({\check A}_1 x_1^{k+1}+{\check A}_2 x_2^{k+1}+{\check A}_3 x_3^{k+1}+{\check A}_4 x_4^{k+1}).
\end{array}
\right.
\end{eqnarray}
Note that we have
\begin{eqnarray*}
{\check A}_j^\top {\check A}_i=0, \;when\; \left\{\begin{array}{l}
                                           i=1,2,3\;\mbox{and}\;j>3, \\
                                           i,j>3 \;\mbox{and}\;  i\neq j,\\
                                           j=1,2,3\;\mbox{and}\;i>3.
                                           \end{array}\right.
                                           \end{eqnarray*}
Consequently, it implies that
\begin{eqnarray}\nonumber
\left\{
\begin{array}{l}
-{\check A}_1^\top \mu^k+ {\check A_1}^\top ({\check A_1} x_1^{k+1}+{\check A}_2 x_2^{k}+{\check A}_3 x_3^{k})=0,\\
-{\check A}_2^\top \mu^k+{\check A}_2^\top ({\check A}_1 x_1^{k+1}+{\check A}_2 x_2^{k+1}+{\check A}_3 x_3^{k})=0,\\
-{\check A}_3^\top \mu^k+{\check A}_3^\top ({\check A}_1 x_1^{k+1}+{\check A}_2 x_2^{k+1}+{\check A}_3 x_3^{k+1})=0,\\
x_i^{k+1}={\check A}_i^\top \mu^k/(1+\frac{\sigma_i}{\beta}),\;i=4,\ldots,m,\\
\mu^{k+1}=\mu^k-({\check A}_1 x_1^{k+1}+{\check A}_2 x_2^{k+1}+{\check A}_3 x_3^{k+1}+\cdots+{\check A}_m x_m^{k+1}).
\end{array}
\right.
\end{eqnarray}
Moreover, setting ${\check \mu}:=\mu_{[1:3]}$ and recall the definition of $A$ in (\ref{EQFour-1}), we get
\begin{eqnarray}\label{tmp1-D-new}
\left\{
\begin{array}{l}
-{A}_1^\top {\hat \mu}^k + {A}_1^\top ({ A_1} x_1^{k+1}+{ A}_2 x_2^{k}+{ A}_3 x_3^{k})=0,\\
-{A}_2^\top {\check \mu}^k +{A}_2^\top ({A}_1 x_1^{k+1}+{ A}_2 x_2^{k+1}+{ A}_3 x_3^{k})=0,\\
-{A}_3^\top {\check \mu}^k +{A}_3^\top ({A}_1 x_1^{k+1}+{ A}_2 x_2^{k+1}+{ A}_3 x_3^{k+1})=0,\\
x_i^{k+1}= \mu_{[i]}^k/(1+\frac{\sigma_i}{\beta}),\;i=4,\ldots,m,\\
{\check \mu}^{k+1}={\check \mu}^k-({ A}_1 x_1^{k+1}+{ A}_2 x_2^{k+1}+{ A}_3 x_3^{k+1}),\\
\mu^{k+1}_{[i]}=\mu^k_{[i]}-x_i^{k+1}\;i=4,\ldots,m.
\end{array}
\right.
\end{eqnarray}
Let us denote $y^\top:=(x_2^\top,x_3^\top,{\check \mu}^\top)$. Then, the iterative scheme (\ref{tmp1-D}) can be written as
\begin{eqnarray}\label{tmp2-D-new}
\left(\begin{array}{l}
y^{k+1}\\
\mu_{[4]}^{k+1}\\
\vdots\\
\mu_{[m]}^{k+1}\\
x_4^{k+1}\\
\vdots\\
x_m^{k+1}\end{array}\right)=\left(\begin{array}{lll}
S&{\bf 0}_{3 \times {(m-3)}}&{\bf 0}_{3 \times {(m-3)}}\\
{\bf 0}_{(m-3)\times 3}&I_{(m-3)\times(m-3)}&-I_{(m-3)\times(m-3)}\\
{\bf 0}_{(m-3) \times 3}&D_{(m-3)\times(m-3)}&{\bf 0}_{(m-3) \times (m-3)}
\end{array}
\right)
\left(\begin{array}{l}
y^{k}\\
\mu_{[4]}^{k}\\
\vdots\\
\mu_{[m]}^{k}\\
x_4^{k}\\
\vdots\\
x_m^{k}\end{array}\right),
\end{eqnarray}
where
\begin{eqnarray}\nonumber
D_{(m-3)\times(m-3)}=\hbox{diag}(\frac{1}{1+\frac{\sigma_4}{\beta}},\cdots,\frac{1}{1+\frac{\sigma_m}{\beta}}).
\end{eqnarray}
It can be easily shown that the absolution value of the maximal eigenvalue of
\begin{eqnarray}
\left(\begin{array}{cc}
I_{(m-3)\times(m-3)}&-I_{(m-3)\times(m-3)}\\
D_{(m-3)\times(m-3)}&{\bf 0}_{(m-3) \times (m-3)}
\end{array}\right)
\end{eqnarray}
is less than 1. Thus, for the coefficient matrix in (\ref{tmp2-D-new}), denoted by ${\check {S}}$, we have
$$
\rho(\check{ S}(\beta))\ge\rho(S)=1.0278>1, \;\;\forall \beta>0.
$$
The proof is complete.
\end{proof}

\subsection{The divergence of (\ref{EADMM}) with $(m-2)$ strongly convex functions and without restriction on $\beta$ for $m\ge 3$}
\label{sect73}
Then, we show that the e-ADMM (\ref{EADMM}) may be divergent even if there are $(m-2)$ strongly convex functions in the objective of (\ref{mini3}) while there is no restriction on $\beta$ for the case of $m\ge3$. We consider the model
\begin{eqnarray}
\label{abstrfour}
\begin{array}{cl}
\min& \frac{1}{2}a_3x_3^2\\
s.t.&A_1 x_1+A_2 x_2+A_3 x_3=0,
\end{array}
\end{eqnarray}
where $A_i\in\Re^3$ $(i=1,2,3)$ are all column vectors, the matrix $A:=[A_1,A_2,A_3]$ is assumed to be nonsingular,
and the scalar $a_3$ is positive. It is easy to see that each iteration of (\ref{EADMM}) applied to (\ref{abstrfour})
can be characterized by a matrix iteration (\ref{iterM}) with the iterative matrix $\hat S$ defined as:
\begin{eqnarray}\label{martix-2example}
\hat S={\hat L}^{-1} R,
\end{eqnarray}
where the matrix $R$ is defined in (\ref{matrixR}) with $m=3$, and $\hat L$ is defined below:
\begin{eqnarray}
\label{matrixtildeL}
\hat L=\left(\begin{array}{lll}
A_2^\top A_2 &0                    &0_{1\times 3}\\
A_3^\top A_2 &A_3^\top A_3+a_3/\beta          &0_{1\times 3}\\
A_2&          A_3                &I_{3\times 3}
\end{array}
\right).
\end{eqnarray}
Specifically, let us take
\begin{eqnarray}\label{Ex2-A}
A=[A_1,A_2,A_3]=\left(
\begin{array}{lll}
1&1&1\\
1&1&2\\
1&2&2\\
\end{array}\right)\;\mbox{and}\;a_3=0.05.
\end{eqnarray}
With simple calculations, if we take $\beta=1$, then we have $\rho(\hat S)$=1.0259 for the matrix $\hat S$ given in (\ref{martix-2example}). On the other hand, for this example, we have $m=3$, $f_3(x_3)=0.025x_3^2$ and $\mu_3=0.05$. According to Theorem \ref{Thm-convergence}, the e-ADMM (\ref{EADMM}) is guaranteed to be convergent when $\beta\in(0,\frac{0.05}{7.5})$, i.e., $\rho(\hat M)=0.9586$ when $\beta=0.0066\in(0,\frac{0.05}{7.5})$. Now, we extend the conclusion to the general case of $m\ge 3$. This theorem also shows that it is necessary to appropriately restrict the penalty parameter $\beta$ when discussing the convergence of the e-ADMM (\ref{EADMM})

\begin{theorem}\label{theorem73}
For model (\ref{mini3}) with $m\ge3$, the e-ADMM (\ref{EADMM}) is not necessarily convergent for all $\beta>0$ when there are $(m-2)$ strongly convex functions in its objective.
\end{theorem}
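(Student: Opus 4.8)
The plan is to imitate verbatim the construction in the proof of Theorem \ref{theorem72}, but to start from the three-block example (\ref{abstrfour})--(\ref{Ex2-A}), in which one function ($\theta_3$) is already strongly convex, rather than from the null-objective problem (\ref{EQFour}). Concretely, for an arbitrary $m\ge 3$ I would consider
\begin{eqnarray*}
\min\ \tfrac{1}{2}a_3 x_3^2+\tfrac{1}{2}\sum_{i=4}^m\sigma_i x_i^2 \quad \hbox{s.t.}\quad \sum_{i=1}^m\check A_i x_i=0,
\end{eqnarray*}
with $\sigma_i>0$ $(i=4,\ldots,m)$, $a_3=0.05$, and the block-diagonal augmentation
\begin{eqnarray*}
\check A=\left(\begin{array}{cc}A&{\bf 0}_{3\times(m-3)}\\ {\bf 0}_{(m-3)\times3}&I_{(m-3)\times(m-3)}\end{array}\right),
\end{eqnarray*}
where $A$ is the $3\times3$ matrix in (\ref{Ex2-A}). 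The strongly convex functions are then exactly $\theta_3,\theta_4,\ldots,\theta_m$, i.e. $(m-2)$ of them as required, while $\theta_1$ and $\theta_2$ are null. The only structural change from Theorem \ref{theorem72} is that $x_3$ now carries a strongly convex term.

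Next I would write out the e-ADMM recursion for this model and exploit the orthogonality relations $\check A_i^\top\check A_j=0$ that come for free from the block-diagonal form of $\check A$ (precisely the relations already used for Theorem \ref{theorem72}). As there, the subproblems for $x_4,\ldots,x_m$ decouple completely from the three-block core $(x_1,x_2,x_3,\mu_{[1:3]})$, and after eliminating $x_1$ the iteration becomes a linear map $v^{k+1}=\check S(\beta)v^k$ with
\begin{eqnarray*}
\check S(\beta)=\left(\begin{array}{ccc}\hat S&0&0\\ 0&I_{(m-3)}&-I_{(m-3)}\\ 0&D_{(m-3)}&0\end{array}\right),
\end{eqnarray*}
where $\hat S=\hat L^{-1}R$ is exactly the three-block iteration matrix (\ref{martix-2example})--(\ref{matrixtildeL}) — so the strong convexity of $\theta_3$ enters through the $a_3/\beta$ term in $\hat L$ — and $D_{(m-3)}=\hbox{diag}(1/(1+\sigma_4/\beta),\ldots,1/(1+\sigma_m/\beta))$. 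Thus the top-left core is now $\hat S$ rather than the null-objective matrix $S$ of Theorem \ref{theorem72}.

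Since $\check S(\beta)$ is block-diagonal between the core $\hat S$ and the lower-right $2(m-3)\times2(m-3)$ block, its spectrum is the union of the two spectra, giving $\rho(\check S(\beta))=\max\{\rho(\hat S),\rho(L_{\mathrm{low}})\}$. The block $L_{\mathrm{low}}=\left(\begin{smallmatrix}I&-I\\ D&0\end{smallmatrix}\right)$ is permutation-similar to a direct sum of $2\times2$ blocks $\left(\begin{smallmatrix}1&-1\\ d_i&0\end{smallmatrix}\right)$ with $d_i=1/(1+\sigma_i/\beta)\in(0,1)$, whose characteristic equation $\lambda^2-\lambda+d_i=0$ yields either real roots in $(0,1)$ (when $d_i\le\frac14$) or complex roots with $|\lambda|^2=d_i<1$; hence $\rho(L_{\mathrm{low}})<1$ and $\rho(\check S(\beta))\ge\rho(\hat S)$ for every $\beta>0$. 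Fixing $\beta=1$, the computation already recorded in the text gives $\rho(\hat S)=1.0259>1$, so $\rho(\check S(1))>1$ and the e-ADMM diverges at this particular $\beta$, which proves the claim (and, together with the convergence guaranteed by Theorem \ref{Thm-convergence} for small $\beta$, shows that restricting $\beta$ is genuinely necessary). I expect the only delicate point to be the bookkeeping that reduces the e-ADMM recursion to the stated block form $\check S(\beta)$; once the decoupling via $\check A_i^\top\check A_j=0$ is secured this is routine, and the $2\times2$ eigenvalue estimate is elementary.
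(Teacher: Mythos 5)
Your construction is correct and it takes a genuinely different route from the paper's. The paper proves Theorem \ref{theorem73} by perturbation: it takes the null-objective divergent instance $A^{(m)}$ supplied by Theorem \ref{theorem71}, puts the modulus $\sigma$ on all of $x_3,\ldots,x_m$, writes the iteration matrix as ${\tilde S}^{(m)}={\tilde L}^{-1}R$, bounds $\|{\tilde S}^{(m)}-S^{(m)}\|\le\sigma\|L^{-1}\|\|L^{-1}R\|$ by a Neumann-series estimate, and invokes Lemma \ref{conspec} (continuity of the spectral radius) to conclude $\rho({\tilde S}^{(m)})>1$ for $\sigma$ sufficiently small. You bypass the perturbation machinery entirely: you start from the concrete three-block instance (\ref{abstrfour})--(\ref{Ex2-A}), in which $\theta_3$ is already strongly convex and $\rho(\hat S)=1.0259>1$ at $\beta=1$ is recorded in the text, and embed it block-diagonally exactly as in the proof of Theorem \ref{theorem72}; the decoupling via $\check A_i^\top\check A_j=0$ makes $\check S(\beta)$ genuinely block-diagonal, and your $2\times 2$ eigenvalue computation ($\lambda^2-\lambda+d_i=0$ with $d_i\in(0,1)$, giving real roots in $(0,1)$ or complex roots with $|\lambda|^2=d_i<1$) shows $\rho$ of the lower block is below $1$ --- a fact the paper merely asserts as ``easily shown'' inside Theorem \ref{theorem72}. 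Hence $\rho(\check S(1))\ge\rho(\hat S)>1$. Your argument is more elementary and more self-contained: no spectral-radius continuity lemma, no Neumann series, and no reliance on the induction behind Theorem \ref{theorem71}, whose details the paper omits.

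One refinement is needed to match the full strength of the claim. The paper's proof ends by observing that \emph{for any} $\beta>0$ one can choose the modulus so that e-ADMM with that particular $\beta$ diverges; this is the intended reading of ``without restriction on $\beta$,'' and it is the right one because Theorem \ref{Thm-convergence} guarantees convergence for small $\beta$ once the moduli are fixed, so no single instance can diverge for every $\beta$. Your proof as written only certifies divergence at $\beta=1$, and the closing parenthetical does not supply the missing quantifier. Fortunately the fix is one line inside your own framework: the iteration matrix depends on the objective only through the ratios $a_3/\beta$ (in $\hat L$) and $\sigma_i/\beta$ (in $D$), so for an arbitrary fixed $\beta>0$ take $a_3:=0.05\,\beta$ and any $\sigma_i>0$; then $\check S(\beta)$ coincides with your $\check S(1)$, and your lower-block estimate already holds for all $\beta$ and all $\sigma_i>0$. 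With that scaling remark added, your proof is complete and arguably cleaner than the paper's.
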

\begin{proof}
In the following, we show that for any $\beta>0$, there exist examples such that the e-ADMM (\ref{EADMM}) is divergent. First, it follows from Theorem \ref{theorem71} that there exists a specific matrix $A^{(m)}\in\Re^{m\times m}$ with $m\ge3$ such that the e-ADMM (\ref{EADMM}) is divergent when it is applied to the equation (\ref{EQFour}) with $A:=A^{(m)}$. It implies that the corresponding matrix $S^{(m)}$ given in (\ref{iterM}) has $\rho(S^{(m)})>1$. Recall the matrices $L$ and $R$ composing $S^{(m)}$ by $S^{(m)}=L^{-1}R$ (see (\ref{matrixL}) and (\ref{matrixR})). Then, with this specific choice of $A^{(m)}$, we consider the following problem:
\begin{eqnarray}
\label{EQmonestrC}
\begin{array}{ll}
\min&\frac{\sigma}{2}\sum_{i=3}^m x_i^2\\
s.t.&\sum_{i=1}^m A^{(m)}_i x_i=0,
\end{array}
\end{eqnarray}
where $A^{(m)}=[A^{(m)}_1,\ldots,A^{(m)}_m]$,
$\sigma>0$ and there are $(m-2)$ strongly convex functions in its objective.
One can show that each iteration of (\ref{EADMM}) applied to (\ref{EQmonestrC})
can be characterized by a matrix iteration (\ref{iterM}) with the iterative matrix ${\tilde S}^{(m)}$
defined as  ${\tilde S}^{(m)}=\tilde L^{-1}R$ with $R$ define in (\ref{matrixR}) and $\tilde L$ defined by
\begin{eqnarray}
\nonumber
{\tilde L}=\left(\begin{array}{cccccc}
A_2^\top A_2 &0           &\cdots &0        &0   &{\bf 0}_{1\times m}\\
A_3^\top A_2 &A_3^\top A_3+\sigma/\beta&\cdots &0       &0    &{\bf 0}_{1\times m}\\
\vdots&\vdots&\ddots&\vdots&\vdots&\vdots\\
A_{m-1}^\top A_2 &\cdots&\cdots&A_{m-1}^\top A_{m-1}+\sigma/\beta&0&{\bf 0}_{1\times m}\\
A_m^\top A_2&A_m^\top A_3&\cdots&A_m^\top A_{m-1}&A_m^\top A_m+\sigma/\beta&{\bf 0}_{1\times m}\\
A_2&          A_3 &\cdots       &A_{m-1} &A_m         &I_{m\times m}
\end{array}
\right)
\end{eqnarray}
Set $\beta=1$ in the e-ADMM (\ref{EADMM}).
Then, let $E:={\tilde L}-L=\hbox{diag}({ 0},\sigma,\cdots,\sigma,{\bf 0}_{m\times m})$, we have
\begin{eqnarray}
&&\|I - (I+L^{-1}E)^{-1}\|=\|\sum_{k=1}^{\infty} (-1)^{k+1}(L^{-1}E)^k\|\le\sum_{k=1}^{\infty}\|L^{-1}E\|^k\nonumber\\
&&=\frac{\|L^{-1}E\|}{1-\|L^{-1}E\|}\le \|E\|\|L^{-1}\|={\sigma}\|L^{-1}\|.
\end{eqnarray}
Next, define $\Delta_S:={\tilde S}^{(m)}- S^{(m)}$. Then, we have
\begin{eqnarray}
\label{tmpfinal}
\|\Delta_S\|=\|-(I - (I+L^{-1}E)^{-1})L^{-1}R\|\le\|I - (I+L^{-1}E)^{-1}\|\|L^{-1}R\|\le \sigma\|L^{-1}\|\|L^{-1}R\|.
\end{eqnarray}
Thus, there exist a positive constant $\bar \sigma$ such that $\|\Delta_S\|<1$ when $\sigma\in(0,\bar \sigma]$.
Then, Invoking Lemma \ref{conspec} with setting $A:=S^{(m)}$ and $\Delta:=\Delta_S$, there exists a positive scalar $\kappa$  dependent on $S^{(m)}=L^{-1}R$ such that
\begin{eqnarray}\label{spec}|\rho(S^{(m)})-\rho({\tilde S}^{(m)})|\le \kappa\|\Delta_S\|\le \sigma \kappa\|L^{-1}\|\|L^{-1}R\|,\end{eqnarray}
where the last inequality is due to (\ref{tmpfinal}).
  Then, the right-hand side of the above inequality only depends on $\sigma$ since $\kappa\|L^{-1}\|\|L^{-1}R\|$ is a constant.
Therefore, there exists a sufficient small $\hat\sigma$ $(\hat \sigma\le\bar\sigma)$ such that $\rho(S^{(m)})-{\hat\sigma} \kappa\|L^{-1}\|\|L^{-1}R\|>1$ whenever $\rho(S^{(m)})>1$.
As a consequent, we have $\rho({\tilde S}^{(m)})>1$ due to  (\ref{spec}).
This implies that the e-ADMM (\ref{EADMM}) with $\beta=1$ is divergent when solving (\ref{EQmonestrC}) with setting $\sigma:=\hat \sigma$.
Indeed, for any $\beta>0$, we can construct a specific problem defined in (\ref{EQmonestrC}), i.e., finding a appropriate $\sigma$,  such that the e-ADMM (\ref{EADMM})  with this $\beta$ is divergent. Note (\ref{EQmonestrC}) is a special case of  (\ref{mini3}) with $(m-2)$ strongly convex functions in its objective.
\end{proof}

\section{Conclusions}

In this paper, we conduct convergence analysis for the direct extension of ADMM (``e-ADMM") for solving a separable convex minimization model whose objective function is the sum of $m$ function without coupled variables. We extend the existing result for the special case of $m=3$ to the general case of $m\ge 3$, and prove the convergence of e-ADMM when $(m-2)$ functions are assumed to be strongly convex and the penalty parameter is appropriately restricted. For the special case of $m=3$, our result is still better than some existing ones that are analyzed specifically for this special case of $m=3$ in the sense that the penalty parameter is less restricted. The worst-case convergence rate measured by iteration complexity and asymptotically linear convergence are also derived under some additional assumptions.

\end{document}